\def\XXint#1#2#3{{\setbox0=\hbox{$#1{#2#3}{\int}$}    \vcenter{\hbox{$#2#3$}}\kern-.5\wd0}}
\newcounter{gr1}
\newcounter{gr1n}
\newenvironment{numlistn}
{\begin{list} { (\roman{gr1n})}
{\usecounter{gr1n}
\setlength{\leftmargin}{0.9cm}
\setlength{\topsep}{0.1cm}
\setlength{\itemsep}{0.0cm}
\setlength{\parsep}{0.1cm}
\setlength{\itemindent}{-0.7cm}
\setlength{\parskip}{0.0cm}}}
{\end{list}}
\theoremstyle{plain}
\newtheorem{notation}{Notation}[section]
\newtheorem{theorem}{Theorem}[section]
\newtheorem{lemma}[theorem]{Lemma}
\newtheorem{proposition}[theorem]{Proposition}
\newtheorem{claim}[theorem]{Claim}
\newtheorem{convention}{Convention}[section]
\theoremstyle{definition}
\newtheorem{definition}[theorem]{Definition}
\newtheorem{assumption}[theorem]{Assumption}
\theoremstyle{remark}
\newtheorem{remark}[theorem]{Remark}
\numberwithin{equation}{section}
\newcounter{lil1}
\newenvironment{step}
{\begin{list} { \bf Step (\Roman{lil1})}
{ \usecounter{lil1}
\setlength{\leftmargin}{0.0cm}
\setlength{\topsep}{0.2cm}
\setlength{\itemsep}{0.0cm}
\setlength{\parsep}{0.1cm}
\setlength{\itemindent}{0.8cm}
\setlength{\parskip}{0.0cm}}}
{\end{list}}
\newcounter{lil33}
\newcounter{gr11}
\newenvironment{letters}
{\begin{list} { (\alph{gr11})$\, $} {\usecounter{gr11}
\setlength{\labelwidth}{-0.2cm} \setlength{\leftmargin}{0.4cm}
\setlength{\topsep}{0.1cm} \setlength{\itemsep}{0.2cm}
\setlength{\parsep}{0.1cm} \setlength{\itemindent}{0.4cm}
\setlength{\parskip}{0.0cm}}} {\end{list}}
\newcounter{lil1q}
\newcommand{\eps}{\vareps}
\newcommand{\MA}{\mathfrak{A}}
\newcommand{\INT}{[0,T]}
\newcommand{\Law}{\mathcal{L}\mbox{aw}}
\newcommand\refy[1]{{}}
\newcommand{\Bspace}{X}
\newcommand{\bcal}{\mathcal{B}}
\newcommand{\shiftpro}{\pro^{\mbox{s}}} 
\newcommand{\Int}{(0,T]}
\newcommand{\AAA}{{U}}
\newcommand{\bNN}{{\bar{ \mathbb{N}}}}
\newcommand{\wi}[1]{\widetilde{#1}}
\newcommand{\zaehler}{\iota}
\newcommand{\baray}{\begin{array}{rcl}}
\newcommand{\earay}{\end{array}}
\newcommand{\barray}{\begin{array}{rcl}}
\newcommand{\earray}{\end{array}}
\newcommand{\levy}{L\'evy }
\definecolor{ballblue}{rgb}{0.13, 0.67, 0.8}
	\definecolor{ao}{rgb}{0.0, 0.0, 1.0}
    \definecolor{amethyst}{rgb}{0.6, 0.4, 0.8}
\newcommand\erika[1]{{\color{ballblue} #1}}
\newcommand\question[1]{{\color{amethyst} #1}}
\newcommand\dela[1]{}
\newcommand{\bcase}{\begin{cases}}
\newcommand{\ecase}{\end{cases}}
\newcommand\Leb{\mbox{\textup{Leb}}}
\newcommand{\WienH}{\mathcal{H}}
\newcommand{\DeltaA}{A}
\newcommand\del[1]{}
\newcommand\del[1]{}
\def\eps{\varepsilon}
   \newcommand{\zaehlerz}{\kappa}
    \newcommand{\vareps}{{\eps_\zaehlerz}}
\newcommand{\lk}{\left}
\newcommand{\lqq}{\lefteqn}
\newcommand{\rk}{\right}
\newcommand{\la}{\langle}
\newcommand{\ra}{\rangle}
\newcommand{\LL}{{\rm I \kern -0.2em L}}
\newcommand{\ep} {\vareps }
\newcommand{\be} {\begin{enumerate} }
\newcommand{\ee} {\end{enumerate} }
\newcommand{\CO}{{{ \mathcal O }}}
\newcommand{\CZ}{{{ \mathcal Z }}}
\newcommand{\CI}{{{ \mathcal I }}}
\newcommand{\CA}{{{ \mathcal A }}}
\newcommand{\CH}{{{ \mathcal H }}}
\newcommand{\CS}{{{ \mathcal S }}}
\newcommand{\CG}{{{ \mathcal G }}}
\newcommand{\CB}{{{ \mathcal B }}}
\newcommand{\CM}{{{ \mathcal M }}}
\newcommand{\CP}{{{ \mathcal P }}}
\newcommand{\BF}{{{ \mathbb{F} }}}
\newcommand{\CF}{{{ \mathcal F }}}
\newcommand{\CN}{{{ \mathcal N }}}
\newcommand{\CL}{{{ \mathcal L }}}
\newcommand{\RR}{{\mathbb{R}}}
\newcommand{\Rb}[1]{{\mathbb{R}_{#1}}}
\newcommand{\WW}{{\mathbb{W}}}
\newcommand{\DD}{\mathbb{D}}
\newcommand{\NN}{\mathbb{N}}
\newcommand{\PP}{{\mathbb{P}}}
\newcommand{\EE}{ \mathbb{E} }
\newcommand{\DEQS}{\begin{eqnarray*} }
\newcommand{\EEQS}{\end{eqnarray*} }
\newcommand{\DEQSZ}{\begin{eqnarray} }
\newcommand{\EEQSZ}{\end{eqnarray} }
\newcommand{\DEQ}{\begin{eqnarray}}
\newcommand{\EEQ}{\end{eqnarray}}
\newcommand{\Dcal} {{\mathcal D}}
\newcommand{\Fcal} {{\mathcal F}}
\newcommand{\Gcal} {{\mathcal G}}
\newcommand{\Kcal} {{\mathcal K}}
\newcommand{\Mcal} {{\mathcal M}}
\newcommand{\Ocal} {{\mathcal O}}
\newcommand{\Vcal} {{\mathcal V}}
\newcommand{\Xcal} {{\mathcal K}}
\newcommand{\Afrak} {{\mathfrak A}}
\newcommand{\R}{\mathbb{R}}
\newcommand{\N}{\mathbb{N}}
\renewcommand{\P}{\mathbb{P}}
\newcommand{\Eb}{\mathbb{E}}
\newcommand{\X}{\mathbb{X}}
\newcommand{\D}{\mathbb{D}}
\newcommand{\Pro}{\mbox{Proj}}
\newcommand{\kl}{\zaehler}
\definecolor{DarkGreen}{rgb}{0.1,0.7,0.3}   
\definecolor{applegreen}{rgb}{0.55, 0.71, 0.0}
\begin{document}

\title[A Schauder Tychonoff Theorem]{ 
A Schauder-Tychonoff fixed-point approach 
for nonlinear L\'evy driven reaction-diffusion systems 
}
\author[Hausenblas]{Erika Hausenblas}
\address{Montanuniversit\"{a}t Leoben\\
Department Mathematik und Informationstechnologie\\
Franz Josef Stra{\ss}e 18\\
8700 Leoben\\
Austria}
\email{erika.hausenblas@unileoben.ac.at}
\author[H\"ogele]{Michael A. H\"ogele}
\address{Universidad de los Andes\\
Departamento de Matem\'aticas, Facultad de Ciencias\\
Cra 1 No 18A - 12\\
111711 Bogot\'a\\
Colombia}
\email{ma.hoegele@uniandes.edu.co}
\author[Kistosil]{Fahim Kistosil}
\address{Department of Mathematics, Institut Teknologi Sepuluh Nopember,Surabaya East Java, Indonesia}
\email{kfahim@matematika.its.ac.id}
\date{\today}
\begin{abstract}
{
We show a stochastic version of the Schauder-Tychonoff fixed point theorem 
which yields a solution of the martingale problem for a class of systems of nonlinear reaction-diffusion equations driven by a cylindrical Wiener process and a Poisson-random measure with certain moments. 
By this type of theorem one can solve systems by linearization which have 
a possibly unbounded, non-dissipative and non-coercive nonlinearity.
}
\end{abstract}

\keywords{stochastic Schauder-Tychonoff type theorem,  nonlinear stochastic partial differential equation.}
\subjclass[2010]{Primary 35K57, 60H15; Secondary 37N25, 47H10, 76S05, 92C15}

\thanks{}

\maketitle

Details of the authors:
\begin{itemize}
	\item  {Erika Hausenblas},{Montanuniversit\"{a}t Leoben\\
		Department Mathematik und Informationstechnologie\\
		Franz Josef Stra{\ss}e 18\\
		8700 Leoben\\
		Austria}
\url{erika.hausenblas@unileoben.ac.at}
\item 
{Michael A. H\"ogele}
{Universidad de los Andes\\
	Departamento de Matem\'aticas, Facultad de Ciencias\\
	Cra 1 No 18A - 12\\
	111711 Bogot\'a\\
	Colombia}
\url{ma.hoegele@uniandes.edu.co}
\item Fahim Kistosil,
Department of Mathematics,\\
 Institut Teknologi Sepuluh Nopember,
 \\
  Surabaya East Java, Indonesia
\\
\url{kfahim@matematika.its.ac.id}
\end{itemize}

{\color{black} 
\section{Introduction}

Systems of reaction-diffusion equations describe the temporal and spatial evolution of the vector of concentrations $u = (u_1, \dots, u_n)$, $u_i = u_i(t, x)$, $t\geq 0$, $x\in \mathcal{O}$ of the reactants of a given chemical reaction in a $d$-dimensional domain $\mathcal{O}$ by 
\begin{equation}\label{e:deteq}
\partial_t u = A u + f(u) \qquad \mbox{ with initial data } u(0) = u_0, 
\end{equation}
where $Au$ typically represents the multidimensional (possibly nonlinear generalization of the)
spatial diffusion term given by the Laplacian according to Fick's second law of diffusion and $f$ a local reaction term. 
The nonlinear function $f(u) = (f_1(u), \dots, f_n(u))$ is a $n$-dimensional polynomial in the $n$ components of $u$. In one dimension famous  examples of nonlinear reaction terms include the Kolmogorov-Petrovsky-Piskunov (KPP) equation of population dynamics also known as Fisher's equation in evolution theory with $f(u) = c u (1-u)$ \cite{KPP37, Fi37}, the Newell-Whitehead equations of finite bandwidth Raleigh-B\'enard convection with $f(u) = u(1-u^2)$ \cite{NW69}, 
or the Zeldovich-Frank-Kamenetzkii equation \cite{ZFK38} of combustion, which exhibits $f(u) = u(1-u) e^{-\beta(1-u)}$ an exponential damping term. 
Two-component systems include the linear models derived by A. Turing in its foundational article \cite{Tu52} on pattern formation. Important nonlinear examples are the Fitzhugh-Nagumo system of action potentials in a nerve cell \cite{FH61, NAY62} and the Gierer-Meinhard system \cite{GM72}. Higher-order equations are given by the Huxley-Hodgkin model of the neuronal action potential \cite{HH52}, Patlak-Keller-Segel equations of chemotaxis \cite{KS70} and the Gray-Scott system of an autocatalytic chemical reaction~\cite{GS83}. There is a huge amount of literature on the solutions of related equations, which we cannot review here due to sheer extensions; important references certainly include classic texts as \cite{Ev10, Fr64, JLL69, Kr87, Ro84, SY02, Te97} among many others. 

When a stochastic term $\xi = (\xi_1, \dots, \xi_d)$, $\xi_i = \xi(x, t, \omega)$ is added, the system 
\[
d u = (A u + f(u)) dt + d\xi(x, t) \qquad \mbox{ with initial data }\quad  u(0) = u_0, 
\]
turns into a system of stochastic partial differential equations, which faces all sorts of additional challenges and adds substantial complexity due to the difficulties of the stochastic integration theory in infinite dimensions combined with the nonlinearity~$f$. For a rigorous  derivation of stochastic partial differential equations of this type by mesoscopic limit theorems we refer to \cite{Ko08}. 

In the main applications of interest in this article the algebraic Nemitskii type nonlinearity $f(u)$ exhibits a polynomial coupling. More precisely, in many situations, the reaction term $f_i$ of the $i$-th component on the right-hand side of \eqref{e:deteq} consists of multidimensional (generalized) polynomials, that is, with possibly non-integer exponents. 
If in $f_i$ the pre-factor of the monomial with the highest power of of $u_i$ is non-positive  
the component is called an inhibitor, otherwise an activator. 
It is well-known that such models with only inhibitor terms are non-Lipschitz but dissipative. 
However, if an activator term exists, they do not satisfy any dissipativity or coercivity conditions.  

 Due to the polynomial type dependence of the equations, the nonlinear right-hand side $f$ retains good continuity properties in the correct spaces and the diffusive term $A$ yields good compactness properties, which make it prone (a priori in the deterministic case) for the application of the Schauder-Tychonoff fixed point theorem~\cite[Theorem 2.A]{zeidler1}, see also \cite{granas}. 

\begin{theorem}[Schauder-Tychonoff fixed point theorem]\label{thm:Schauder}
Consider a Banach space $\mathbb{X}$ and let $K\subset \mathbb{X}$ be a nonempty closed convex subset. If $\Upsilon: K \to K$ is continuous with a pre-compact image set, then the operator $\Upsilon$ has a fixed point. 
\end{theorem}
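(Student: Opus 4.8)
The plan is to deduce the statement from the finite-dimensional Brouwer fixed point theorem via two standard reductions: first shrink the domain to a compact convex set using Mazur's theorem, and then approximate $\Upsilon$ by finite-dimensional maps by means of Schauder projections.

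\emph{Reduction to a compact domain.} Since $\Upsilon(K)\subset K$ and $K$ is closed, $\overline{\Upsilon(K)}\subset K$; since $K$ is moreover convex, the closed convex hull $C:=\overline{\operatorname{conv}}\,\Upsilon(K)$ is a nonempty closed convex subset of $K$. By Mazur's theorem the closed convex hull of a relatively compact subset of a Banach space is compact, so $C$ is compact. As $C\subset K$ we have $\Upsilon(C)\subset\Upsilon(K)\subset C$, so the restriction $\Upsilon|_C$ is a continuous self-map of the nonempty compact convex set $C$, and it suffices to find a fixed point there.

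\emph{Finite-dimensional approximation.} For $\varepsilon>0$ I would pick, by compactness, points $x_1,\dots,x_n\in C$ with $C\subset\bigcup_{i}B(x_i,\varepsilon)$, set $\mu_i(x):=\max\{0,\varepsilon-\|x-x_i\|\}$, and define the Schauder projection
\[
P_\varepsilon(x):=\Big(\textstyle\sum_i\mu_i(x)\Big)^{-1}\sum_i\mu_i(x)\,x_i ,
\]
which is continuous on $C$ (the sum in the denominator is strictly positive there), takes values in the finite-dimensional compact convex set $C_\varepsilon:=\operatorname{conv}\{x_1,\dots,x_n\}$, and satisfies $\|P_\varepsilon(x)-x\|<\varepsilon$ because $\mu_i(x)>0$ forces $\|x-x_i\|<\varepsilon$. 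The map $C_\varepsilon\ni x\mapsto P_\varepsilon(\Upsilon(x))$ is then a continuous self-map of $C_\varepsilon$; identifying $C_\varepsilon$ with a compact convex subset of some $\mathbb{R}^m$, Brouwer's theorem yields $x_\varepsilon\in C_\varepsilon$ with $P_\varepsilon(\Upsilon(x_\varepsilon))=x_\varepsilon$, whence
\[
\|x_\varepsilon-\Upsilon(x_\varepsilon)\|=\|P_\varepsilon(\Upsilon(x_\varepsilon))-\Upsilon(x_\varepsilon)\|<\varepsilon .
\]

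\emph{Passage to the limit.} Taking $\varepsilon=1/k$ produces $x_k\in C$ with $\|x_k-\Upsilon(x_k)\|<1/k$. Since $C$ is compact one can extract a subsequence $x_{k_j}\to y\in C$; then $\Upsilon(x_{k_j})\to y$ as well, and continuity of $\Upsilon$ gives $\Upsilon(x_{k_j})\to\Upsilon(y)$, so $\Upsilon(y)=y$. The only genuinely nontrivial ingredients are the two classical inputs invoked along the way — Brouwer's fixed point theorem, which carries all the topological content, and Mazur's compactness of closed convex hulls, which powers the reduction; the partition-of-unity estimate for $P_\varepsilon$ and the diagonal limit are routine. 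The point requiring care is that $P_\varepsilon$ must land inside the convex hull of the chosen finite net, so that Brouwer's theorem is genuinely applied in finite dimensions.
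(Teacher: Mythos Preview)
Your argument is correct and follows the standard Schauder-projection route: reduce to a compact convex set via Mazur's theorem on closed convex hulls, approximate by finite-dimensional maps using a partition-of-unity projection onto a finite net, invoke Brouwer, and pass to the limit by compactness. Each step is carried out cleanly; the only minor remark is that since the hypothesis already gives $\Upsilon(K)$ \emph{compact} (not merely relatively compact), Mazur's theorem applies directly without taking a closure first.

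As for comparison with the paper: there is nothing to compare. The paper does not prove Theorem~\ref{thm:Schauder}; it is quoted as a classical result with references to \cite[Theorem~2.A]{zeidler1} and \cite{granas}, and is then used as a black box inside the proof of the main stochastic result (Theorem~\ref{ther_main}). Your write-up would serve perfectly well as a self-contained appendix proof if one were desired.
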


While there are {many} different kinds of stochastically strong solutions, such as classical solutions \cite{DPR04},  mild solutions \cite{Wa86}, Dirichlet solutions \cite{AR91}, variational solutions \cite{DPR04}, or solutions via rough paths and regularity structures \cite{Ha14} constructed on a given probability space, stochastically weak solutions introduced in \cite{Vio76} only reflect the statistics (laws) of the respective processes as solutions of a respective martingale problem, see \cite{Me88b}. 
We refer to the exhaustive and recent review of the literature on reaction diffusion equations 
with unbounded nonlinearity driven by Gaussian and L\'evy noise in \cite[on p.132-133]{reacdiff}. 
The references there are \cite{BG99, zmtypep, BHZ13, Brz+Haus_2009, BZ10, Ce03, CR04, DaPrZa, DPR04, DHI11, DHI13, DX07, DX09, FS13, GG94, Erika05, Erika07, HG13, JPW10, LR10, roecknerwei, MR06, MR10, My02, Peszat95, Peszat_Z_2007, RZ07, TW03, TW06}. A review of the exiting literature yields, that apart from the recent manuscripts \cite{EAkash22} and \cite{jonas} the existence is shown by finite dimensional approximations and compactness arguments with in comparison with the martingale representation theorem by \cite{dettweiler}, and strong solutions by the Banach fixed point theorem, Dirichlet forms, or other means, but not the Schauder-Tychonoff fixed point theorem.

In this article, we establish the existence of a weak solution of a general class of systems of stochastic reaction-diffusion equations with multiplicative noise of the following type 
\begin{equation}\label{e:SPDE}
dX = (A X + f(X)) dt + g(X) dL, \qquad \mbox{ with initial data }\quad  X(0) = X_0
\end{equation}
with respective (random) inital conditions with the help of Theorem~\ref{thm:Schauder}. In such a setting, the finite-dimensional approximation methods for strong or weak solutions become increasingly untractable for multi-component systems and rather inefficient since the respective methods and similar types of reasoning have to be carefully adapted and repeated in each of the settings. A Schauder-Tychonoff fixed point theorem approach for strong solutions requires to identify compact sets in path space, which turns out to be rather involved; we refer to \cite{bally}, where this approach has been carried out for special situations with Gaussian noise, which is one reason why we focus on weak solutions instead.

In \cite{jonas}, a stochastic version of the Schauder-Tychonoff theorem is part of a strategy to obtain  
the existence of a martingale solution to the stochastic Klausmeier system with Wiener noise. Even though the Schauder-Tychonoff argument was only part of the strategy, the theorem was applied to a nonlinear SPDE the first time. In \cite{EAkash22}, a more direct approach has been implemented to show the existence of a weak solution of the stochastic Gierer-Meinhardt system with Wiener noise. Here, the theorem is applied directly with the following parameters 
\begin{align*}
&n=2, \mbox{ all } c_{ij}>0, A = (c_{11} A_1, c_{21} A_2), f = (f_1, f_2), g = (g_1, g_2),
L = (W_1, W_2)\\
&A_1 = A_2 = \Delta \mbox{ the standard Laplacian with Neumann conditions }\\
&f_1(u) = c_{12}\, \frac{u_1^2}{u_2} - c_{13} u_1, \qquad f_2(u) = c_{22} u_1^2 - c_{23} u_2,\qquad g_j(u) = c_{j4} u_j, j=1,2\\
&(W_1, W_2) \mbox{ a pair of independent $Q_j$-Wiener processes with values in $L^2(\mathcal{O})$}\\
&\qquad \mbox{and covariance operator $Q_j$, $j=1,2$, respectively.}
\end{align*}
That is to say, roughly speaking, we define for a particular, well-chosen class of processes $\eta \in \mathbb{X}$ the solution of the controlled equation 
\[
dX^\eta = (A X^\eta + F(\eta)) dt + g(X^\eta) dL, 
\]
and define the fixed-point operator $\Upsilon(\eta) := X^{\eta}$ on the laws of the processes of $\eta$. In a second step, we construct a convex bounded set $K \subset \mathbb{X}$ and show that $\Upsilon$ maps  $K\to K$,  is continuous on $K$ with respect to the topology on $\mathbb{X}$, and that $\Upsilon(K)$ is relative compact. Again, for special models and Wiener perturbations $L = W$, this method is implemented in \cite{EAkash22} and~\cite{jonas}.

The purpose of this article is two-fold: 
\begin{enumerate}
 
 \item We standardise and modularise the proof of the existence of weak solutions 
by the Schauder-Tychonoff fixed-point method for the laws of respective parametrized solutions. 
In particular, we clarify the precise role of the different spaces and their topologies in an abstract setting, which can be used by a large class of systems of type \eqref{e:SPDE}. 

 \item We generalize the stochastic Schauder-Tychonoff theorem (see \cite{jonas}) to drivers of an independent Poisson random measure, which are crucial, for instance, in biophysics applications. 
\end{enumerate}
}
This method is suitable for application in different settings such as bi-domains, cross-diffusions,  or neurology. Our method complements the existing literature and provides an alternative existence proof strategy which looks flexible enough to be adapted to rather complex settings. 

Let us introduce some basic notation used throughout the article. 
\begin{notation}\label{notationref}
$\RR$ denotes the real numbers, $\RR^+:=\{ x\in\RR:x>0\}$ and $\RR^+_0:=\RR^+\cup\{0\}$. By $\mathbb{N}$ we denote the set of natural numbers (including $0$) and by $\bar{\mathbb{N}}$ we denote the set $\mathbb{N}\cup\{\infty\}$.
\end{notation}


{\begin{definition}\label{notationref3}
A measurable space $(Z,\CZ)$ is called Polish if there exists a metric $\varrho$ on $Z$ such that $(Z,\varrho)$ is a complete separable metric space and $\CZ=\mathscr B(Z,\varrho)$.
\end{definition}}
\begin{notation}\label{notationref4}
{
The set of all {finite non-negative} measures on a Polish space $(Z,\CZ)$ will be denoted by $M_+(Z)$ and $\CP_1(Z)$ will stand for probability measures on $\CZ$. If a family of sets $\{Z_n\in\CZ:n\in\NN\}$ satisfy $Z_n\uparrow Z$ then $M_{\bar \NN}(\{Z_n\})$ denotes the family of all $\bar{\mathbb{N}}$-valued measures $\theta$ on $\CZ$ such that $\theta(Z_n)<\infty$ for every $n\in\Bbb N$. By $\CM_{\bar \NN}(\{Z_n\})$ we denote the $\sigma$-field on $M_{\bar \NN}(\{Z_n\})$ generated by the functions $i_B:M_{\bar \NN}(\{Z_n\})\ni\mu \mapsto \mu(B)\in \bNN$, $B\in \CZ$.
}
\end{notation}

\bigskip 

\section{Stochastic Preliminaries}\label{preliminares}

In this section, we introduce the stochastic setting. In particular, we introduce the cylindrical Wiener process, the Poisson random measure, and the L\'evy process, and the necessary definitions. 

Let $\mathfrak{A}=(\Omega,\CF,\mathbb{F},\PP)$ be a filtered probability space satisfying the so-called usual conditions, see \cite{Protter}, i.e.
\begin{trivlist}
	\item[(i)] $\mathbb{P}$ is complete on $(\Omega, \mathcal{F})$,
	\item[(ii)] for each $t\in \mathbb{R}_+$, $\mathcal{F}_t$ contains all $\mathbb{P}$-null sets of $\mathcal{F}$,
	\item[(iii)] the filtration $\mathbb{F}$ is right-continuous.
\end{trivlist}

\subsection{The cylindrical Wiener process}\label{ss:cylindrical}

Let $\WienH$  be a separable Hilbert space over $\RR$.
Let $W$ a cylindrical Wiener process over $\mathfrak{A}$ taking values in $\WienH$.
Due to the spectral decomposition, we know that the Wiener process can be represented as follows
\DEQ\label{eq:wiener}
W(t):=\int_0^ t \sum_{k=1}^\infty \phi_kd\beta_k(t),\quad t\ge 0,
\EEQ
where $\{ \beta_k:k\in\NN\}$ is a family of independent Brownian motions over $\mathfrak{A}$ 
and $\{ \phi_k:k\in\NN\}$ is an orthonormal basis in $\WienH$. 
It is know that this representation is not a loss of generality, see \cite[Proposition 4.7, p. 85]{DaPrZa}.

\bigskip 

\subsection{Time-homogeneous Poisson random measures}\label{ss:PRM}

Since the definition of time-homogeneous Poisson random measure is introduced in many, not always equivalent ways, we give {our} definition here. Note that the Poisson random measure is constructed by a given non--negative measure, i.e.\ a non-negative measure which can be infinite, alled L\'evy measure and will be denoted in the sequel by $\nu$. Let $(Z,\CZ)$ be a Polish space with a fixed metric $d$. We suppose the measure is $\sigma$-finite, hence we know there exists a family of sets {$\{Z_n\in\CZ\}$ such that $Z_n\uparrow Z$ and $\nu(Z_n)<\infty$ for every $n\in\Bbb N$.
The $\sigma$--finiteness is not only essential to show the existence of a Poisson random measure, 
it is also necessary to be able to switch between different probability spaces.

\begin{definition}\label{def-Prm}(see \cite{ikeda}, Def. I.8.1)
Let $(Z,\CZ)$  be a Polish space, 
$\nu$ a $\sigma$--finite measure on $(Z,\CZ)$ and $T>0$. 
A
{\sl time-homogeneous Poisson random measure} $\eta$ 
over a filtered  probability space $(\Omega,\CF,\BF,\PP)$, where $\BF=(\CF_t)_{t\in\INT}$,  is a measurable function
$$\eta: (\Omega,\CF)\to ({ M_{\bar{\Bbb N}}(\{Z\times\Int \}),\mathcal M_{\bar{\Bbb N}}(\{Z\times\Int \})}) 
$$
such
that
{
\begin{trivlist} \item[(i)]
for each $B\in  \CZ \otimes
\mathcal{B}({\mathbb{R}^+}) $ with $\EE \eta(B) < \infty$
 $\eta(B):=i_B\circ \eta : \Omega\to \bar {\mathbb{N}} $ is a Poisson random variable with parameter 
 $\EE \eta(B)$, otherwise  $\eta(B)=\infty$ a.s.
\item[(ii)] $\eta$ is independently scattered, i.e.\ if the sets $
B_j \in   \CZ\otimes \mathcal{B}({\mathbb{R}^+})$, $j=1,\cdots,
n$, are  disjoint,   then the random variables $\eta(B_j)$,
$j=1,\cdots,n $, are mutually independent;
\item[(iii)] for each $U\in \CS$, the $\bar{\mathbb{N}}$-valued
process $(N(t,B))_{t\in \INT }$  defined by
$$N(B, t):= \eta(B \times (0,t]), \;\; t\in \INT $$
is $\BF$-adapted and its increments are stationary and independent of the past,
i.e.\ if $t>s\geq 0$, then $N(B, t)-N(B, t)=\eta(B \times (s,t])$ is
independent of $\mathcal{F}_s$.
\end{trivlist}
}
\end{definition}


\begin{remark}\label{compensator}
In the framework of Definition \ref{def-Prm}  the assignment
$$
\nu: \CZ \ni  B \mapsto \mathbb{E}\big[\eta( B\times(0,T])\big] 
$$
defines a  uniquely determined measure, called in the following {\sl intensity measure}.
\end{remark}

\subsection{L\'evy processes}\label{rel-l-prm}
Given a {pure-jump} L\'evy process, one can construct a corresponding Poisson random measure. Vice versa,
given a Poisson random measure, one easily gets a corresponding L\'evy process. To illustrate this fact, let us recall start with the definition of a L\'evy process.
 
\begin{definition} Let $E$ be a Banach space. 
A stochastic process $L=\{ L(t):t\ge 0\}$ over a probability space $(\Omega,\CF,\PP)$
is an $E$-valued L\'evy process if the following conditions are satisfied.
\begin{numlistn} 
\item $L_0=0$ a.s. 
\item For any choice $n\in\NN$ and $0\le t_0<t_1<\cdots t_n$, the random vectors
$L(t_0)$, $L(t_1)-L(t_0)$, $\ldots$, $L(t_n)-L(t_{n-1})$ are independent.
\item For all $0\le s<t$, the law of $L(t+s)-L(s)$ does not depend on $s$.
\item The trajectories of $L$ are a.s. c\`adl\`ag on $E$. 
\end{numlistn} 
Let $\BF=\{\CF_t\}_{t\ge 0}$ be a filtration on $\CF$. We say that
$L$ is a L\'evy process over $(\Omega,\CF,\BF,\PP)$, if $L$ is an $E$--valued and $\BF$-adapted L\'evy process.
\end{definition}

The characteristic function of a L\'evy process is uniquely defined and is given by the L\'evy-Khinchin formula. In particular, if $E$ is a Banach space  of type $p$ and $L= \{ L(t):t\ge 0\}$ is an $E$-valued L\'evy process, there exist a positive symmetric operator $\mathcal{Q}:E ^ \prime \to E$, a non-negative measure
$\nu$ concentrated on $E\setminus \{0\}$ such that $\int_E (1\wedge |z|^p)\,\nu(dz)<\infty$, and an element $m\in E$ such that (we refer e.g.\ to \cite{apple, Gine, linde})
\DEQS
 \lqq{ \EE e^{i\la L(1),x\ra}=  \exp\Big( i \la m,x \ra -\frac 12 \la \mathcal{Q}x,x\ra }&&
\\&&
-\int_E \lk( 1-e ^ {i\la y,x\ra }+1_{(-1,1)}(|y|)i\la y,x\ra \rk)\nu(dy)  \Big),\quad x\in E^\prime .
\EEQS
The measure $\nu$ is called {characteristic measure} of the L\'evy process $L$. A L\'evy process is of {pure jump type}  iff $\mathcal{Q}=0$. Moreover, the triplet $(\mathcal{Q},m,\nu)$ is uniquely determined and characterizes the law of the L\'evy process.

Now, starting with an $E$--valued L\'evy process over a filtered probability space $(\Omega,\CF,\BF, \PP)$
one can construct an integer valued random measure as follows.
For each $ (B,I)\in \CB(\RR)\times \CB(\RR_+)$ let
$$
 \eta_L(B\times I ) := \#\{s\in I \mid \Delta_s L\in B\}\in\NN_0\cup\{\infty\}.\footnote{The {jump process} $\Delta X = \{\Delta _tX,\,0\le t<\infty\}$
of a process $X$ is defined by
$\Delta_t X := X(t) - X(t-)= X(t) - \lim_{s\uparrow  t} X(s)$,
$t> 0$ and $\Delta _0X:=0$.}
$$
If $E=\RR ^ d$, it can be shown that $\eta_L$ defined above is a time-homogeneous Poisson random measure
(see Theorem~19.2 \cite[Chapter 4]{sato}).

\medskip
Vice versa,
let $\eta$ be a time-homogeneous Poisson random measure on a Banach space $E$ of type $p$, $p\in (0,2]$.
Then the integral (Dettweiler \cite{dettweiler})
$$ 
t\mapsto \int_0 ^ t \int_Z  z\,\tilde \eta(dz,ds)
$$
is well-defined, if the intensity measure is a L\'evy measure, whose definition is given in Appendix~\ref{levy}. 

With respect to Poisson random measures or L\'evy processes, one can define a stochastic integral in different settings. In particular, one can assume that the integrand is predictable or only progressively measurable. Here, we will use the second setting, where the integrand is supposed to be progressively  measurable.

Let us recall that
a process $\xi:[0,T]\times \Omega\times Z\to E$ is $\mathbb{F}\otimes \mathcal{Z}$-progressively measurable, if $\xi$ is $\mathcal{BF}\otimes \mathcal{Z}/\mathcal{B}(E)$-measurable, where, see \cite[Section 6.5]{wentzell}, $\mathcal{BF}$ is the $\sigma$-field consisting of all sets $B\subset  [0,T]\times \Omega$ such that for every $t\in [0,T]$, the set $B\cap \big( [0,t]\times \Omega)$ belongs to the sigma field $\mathcal{B}_{[0,t]}\otimes \mathcal{F}_t$. Note that $\mathcal{BF}\otimes \mathcal{Z}$ is the $\sigma$-field generated by a family of all sets $B\subset  [0,T]\times \Omega \times Z$ such that for every $t\in [0,T]$, the set $B\cap \big( [0,t]\times \Omega \times Z)$ belongs to the sigma field $\mathcal{B}_{[0,t]}\otimes \mathcal{F}_t \otimes \mathcal{Z}$.\\
For $p\in [1,\infty)$, the set of all $p$--integrable $\mathcal{BF}\otimes \mathcal{Z}$-progressively processes $\xi:[0,T]\times \Omega\times Z\to E$ will be denoted by 
\[
\mathcal{M}^{p}([0,T]\times Z; \mathcal{BF}\otimes \mathcal{Z};E)
\] 
and the Banach space of all equivalence classes of $p$--integrable $\mathcal{BF}\otimes \mathcal{Z}$-progressively processes $\xi:[0,T]\times \Omega\times Z\to E$ will be denoted by 
\[
\mathbb{M}^{p}([0,T]\times Z; \mathcal{BF}\otimes \mathcal{Z};E).
\]

In \cite{janmaxreg}  it  is proven that for any Banach space $E$ of $M$--type $p$ there exists a unique
continuous linear operator $I$ which associates   to  each progressively measurable
process $\xi: {\mathbb{R}}_{+} \times \Omega \to L^p(Z,\nu;E) $  with $\PP$-a.s.\
\begin{equation} \label{cond-2.01}
\int_{0}^{T} \int_{Z} \vert \xi (r,z) \vert_E^{p} \, \nu (dz) dr   < \infty,
\end{equation}
for every $T>0$, an adapted $E$-valued c\`{a}dl\`{a}g process
$$
{I}_{\xi , \tilde{\eta }} (t):=\int_{0}^{t} \int_{Z} \xi (r,z) \tilde{\eta } (dz, dr), \quad t\in [0, T], 
$$
such that if a process $\xi$ satisfying  the above condition (\ref{cond-2.01}) is a
random step process with representation
\DEQSZ\label{kkk}
\xi(r,z) = \sum_{j=1} ^n 1_{(t_{j-1}, t_{j}]}(r) \, \xi_j(z),\quad z\in Z,\quad  r\in \INT ,
\EEQSZ
where $\{t_0=0<t_1<\ldots<t_n<\infty\}$ is a finite partition of $[0,\infty)$ and
for all $j\in \{ 1, \ldots ,n \}$, $\xi_j$ is an $E$-valued $\CF_{t_{j-1}}$--measurable $p$-summable simple  random variable, then
\begin{equation} \label{eqn-2.02}
 I_{\xi,\tilde{\eta }} (t) 
=\sum_{j=1}^n  \int_Z  \xi_j (z) \,\tilde \eta \lk(dz, (t_{j-1}\wedge t, t_{j}\wedge t] \rk),\quad   t\in \INT .
\end{equation}
This definition can be extended to all progressively measurable mappings $\xi:\Omega\times \INT \times Z\to E$ with $\PP$--a.s.
\DEQS
\int_0^ T \int_Z\min(1,|\xi(r,z)|^p_E) \nu(dz)\, dr<\infty .
\EEQS
More information on the different settings is given in \cite{ruediger}.

\bigskip 

\section{The stochastic Schauder-Tychonoff type theorem}\label{schauder}

In this section we first present the setting, then we state the main result, and finally we give the proof. Let us fix some notation.


Let $\Afrak=(\Omega,\Fcal,\mathbb{F},\P)$ be a filtered probability space
with filtration $\mathbb{F}=(\Fcal_{t})_{t\in [0,T]}$ satisfying the usual conditions introduced in Section~\ref{preliminares}.
Let $\WienH$ be a separable Hilbert space and $\{W(t):{t\in [0,T]}\}$ be a 
on $\WienH$  Wiener process with covariance $\mathcal{Q}$ over $\MA$ {defined in Subsection~\ref{ss:cylindrical}} and 
$\eta $ be a time-homogeneous Poisson random measure with intensity measure $\nu$ over $\MA$ described before.

\medskip

\begin{assumption}\label{standingassumption}
Let us  assume that
\begin{itemize}
\item Let$U$ and $\Bspace $ be some Besov spaces over a bounded domain $\CO\subset \RR^d$. 
\item If a Wiener process is involved, we suppose that both spaces are of  UMD type $2$, if only a L\'evy process is involved, then we suppose that both spaces are of  UMD  type $p$, where $p\ge 1$ satisfies  \eqref{heregrow}.
\item Let $\Ocal\subset \R^d$ be an open domain, $d\ge 1$. Let $\X\subset\{\xi:[0,T]\to \Bspace \subset\Dcal^{\prime}(\Ocal)\}$
be the Banach function space\footnote{Here, $\Dcal^\prime(\Ocal)$ denotes the space of Schwartz distributions on $\Ocal$, that is, the topological dual space of smooth functions with compact support $\Dcal(\Ocal)=C_0^\infty(\Ocal)$.} given by $L^m(0,T;X)$, where $m\ge 1$, and let $\X^{\prime}\subset\{\xi:[0,T]\to \Bspace \subset\Dcal^{\prime}(\Ocal)\}$
be a reflexive Banach function space embedded compactly into $\X$. 
\item In particular, let $\mathbb{X}':=L^m(0,T;X')\cap \mathbb{W}^{\alpha}_m(0,T;X_0)$\footnote{For the definition of $\mathbb{W}^{\alpha}_m(0,T;X_0)$ see Appendix \ref{dbouley-space}}, where $\alpha>0$, $X'\hookrightarrow X$ compactly, and $X\hookrightarrow X_0$ continuously.
In both cases, the trajectories take values in a Banach function space $\Bspace $ over the spatial domain $\Ocal$.

\item  $\DeltaA:D(\DeltaA)\subset \Bspace \to \Bspace $ is a possibly nonlinear and measurable (single-valued) operator, defined on a Gelfand triple
$V\hookrightarrow X  \hookrightarrow  V^\ast$.  In particular, $X$ is a separable Hilbert, $V$ a reflexive Banach space. We assume that the operator satisfies the setting given in Theorem~5.1.3  \cite[p.\ 125]{roecknerwei} with a given $\alpha>1$.
In case, where e.g.\ $A$ is a unbounded linear generating an analytic semigroup, we have  $V := V_\frac{1}{2}$,  {where $V_\gamma := [X,D(A)]_\gamma$, $\gamma\in (0,1)$ is the standard real interpolation space of exponent $\gamma$ between $X$ and the domain $D(A)$ of $\DeltaA$, see \cite[Section~2.4]{bergh}.}  In addition, let $\mathbb{X}:= L^\alpha(0,T;V)$.

\item   $F:V_\gamma\times [0,T]\to X $, $\gamma<1$, a (strongly) measurable map. 

\item 
{$\Sigma:V\times [0,T]  \to L(\mathcal{H},\Bspace )
 $
 such that there exists a constant $C>0$ such that we have
 $$
 |\Sigma (u,t)|^2_{L(\mathcal{H},X)}\le C
{ |u|^2_{V} }
 ,\quad t\in[0,T],\quad  u\in V.
 $$}
\item {a function $g: V \times Z \to \Bspace $ such that 
  there exists a constant $C>0$ such that we have
 $$
\int_Z |g (u,z)|^2_{X}\nu(dz)\le C { |u|^2_{V}} 
,\quad u\in V.
 $$}
 \item 
and a UMD-Banach space $U$ and $U'$, where $U'\hookrightarrow U$ compactly,  of type $2$, such that for all $u\in X$, we have 
 $|A u|_{ U'}$, $|F(u,t)|_{ U'}\le C$,
 $$
\lk|\Sigma(u,t)\rk|_{\gamma(\mathcal{H},U')}\le C  \quad \mbox{and}\quad \int_Z |g(u,z)|_{U'}^l\nu(dz)\le C\,\mbox{ for all } l=m,2,\,t\in[0,T].
 $$
Set $\mathbb{U}:=\mathbb{D}(0,T;U)$\footnote{Here, $\mathbb{D}(0,T;U)$ denotes the Skorokhod space of all c\'adl\'ag functions over $[0, T]$ with values in $U$.}.
\end{itemize}
\end{assumption}

For $m\ge 1$ we define the space of processes
\begin{equation}\label{eq:MMdef}
\begin{aligned}  \Mcal_{\Afrak}^{m}(\X)
:= & \Big\{ \xi:\Omega\times[0,T]\to \Bspace \;\colon\;\\
&\qquad\text{\ensuremath{\xi} is \ensuremath{\mathbb{F}}-progressively measurable}\;\text{and}\;\Eb|\xi|_{\X}^{m}<\infty\Big\}
\end{aligned}
\end{equation}
equipped with the semi-norm
\[
|\xi|_{\Mcal_{\Afrak}^{m}(\X)}:=(\Eb|\xi|_{\X}^{m})^{1/m},\quad\xi\in\Mcal_{\Afrak}^{m}(\X).
\]

\begin{remark}
Given a $Z$--valued L\'evy process the assumption can be easily rewritten. {Let
$G : V \to L(Z, X)$ be the coefficient of the jump term.
In particular, we get for $g$ 
\DEQSZ\label{coefflevy}
g(x,z) &:=&  G(x)z,\quad z\in Z,\,x\in X.
\EEQSZ}
\end{remark}
\begin{remark}
    The exact space is given by the properties of the operator $A$ and the parameter $\alpha$. In fact, the coercivity of $A$  in the variational setting gives the exponent $\alpha$ and the space $V$. This is satisfied in most applications, however, it is not used directly.
\end{remark}
The aim is to provide a tool for proving the existence of a weak solution in the variational PDE sense 
to the following stochastic system 
\DEQSZ\label{spdesproblem}
\lqq{ dw(t) =\lk(\DeltaA w(t)+ F(w(t),t)\rk)\, dt }
&&
\\ &&{}+\Sigma(w(t),t)\,dW(t)+ \int_Z g(w(t),z)\tilde \eta(dz,dt),\quad w(0)=w_0\in V^\ast .\notag 
\EEQSZ

\medskip
\noindent Since the term strong solution can refer to different notions in the PDE and probabilistic settings, we provide a precise definition here.
    \begin{definition}[Strong solution in the probabilistic sense]\label{def:singleSLN}\label{eq:sol-def}
	Consider a filtered probability space
	\begin{equation}\label{eq:filt-prob-space}
		\mathfrak{A}=(\Omega,\CF,\mathbb{F},\PP),
		\quad
		\mathbb{F}=(\mathcal{F}_t)_{t\in[0,T]},
	\end{equation}
	where $\mathbb{F}$ is a right-continuous filtration
	and $\mathfrak{A}$ satisfies the usual conditions.
	Let $W$ be a   $\WienH$-valued Wiener process with covariance $\mathcal{Q}$, modeled
	on $\mathfrak{A}$.
	Furthermore, let $\eta$ be an independent Poisson
	random measure with its compensator $\tilde \eta$ defined over $\mathfrak{A}$.
  
        Additionally, let $A:V\to V^\ast$, $F:V_\gamma \to X$, $\Sigma: V\to L(\mathcal{H},X)$,   and $g:V\times Z\to X$
        be two mappings satisfying Assumption \ref{standingassumption}.
%
   We fix $T>0$ and consider solutions on
	$[0,T]$. We call the progressively measurable process
	\begin{equation}\label{eq:sol-tuple}
u:\Omega\times [0,T]
		\to X 
	\end{equation}
	a {strong} solution (in the probabilistic sense) of equation \eqref{spdes}, if $u$ solves $\PP$-a.s. {for all $\phi \in V$}
\DEQSZ\label{spdes}
\lqq{ \quad\quad  \la u(t),\phi\ra  =\la u_0,\phi\ra +\int_0^ t \la \DeltaA u(s),\phi\ra \, ds + \int_0^ t {\la F(u(s)),\phi\ra} \, ds }
&&
\\ &&{}+\int_0^ t {\sum_{j=1}^{\infty}\la\Sigma(u(s))\phi_j,\phi\ra} d\beta_j(s) + \int_0^t \int_Z \la g(u(s),z),\phi \ra \,\tilde \eta(dz,ds),\quad u(0)=u_0\in \Bspace .\notag 
\EEQSZ
\end{definition}

\medskip 
{In the next step we define the integral operator for the fixed point theorem.} 
To show the existence of a solution we define a fixed point operator as follows. 
For the given filtered probability space $\Afrak$, a Wiener process $W$, and an independent  Poisson random measure 
$\eta$, and  $m\ge 2$, we define the operator $\Vcal_{\Afrak,W,\eta }:\Mcal_{\Afrak}^{m}(\X)\to\Mcal_{\Afrak}^{m}(\X)$
for $\xi\in\Mcal_{\MA}^m(\X)$ via
\DEQSZ\label{operator_def}
\Vcal_{\Afrak,W,\eta }(\xi) &:=&w,
\EEQSZ 
where $w$ is  the solution of the following It\^o stochastic partial differential equation (SPDE)
\DEQSZ\label{spdes}
\lqq{ dw(t) =\lk(\DeltaA w(t)+ \bar F(\xi(t),w(t),t)\rk)\, dt }
&&
\\ &&{}+\Sigma(w(t))\,dW(t)+ \int_Z g(w(t),z)\,\tilde \eta(dz,ds),\quad w(0)=w_0\in \Bspace .\notag 
\EEQSZ
Here, the mapping $\bar F$ is chosen in such a way that $\bar F(\xi,\xi,t)=F(\xi,t)$ in \eqref{spdesproblem} for all $t\in[0,T]$ and $\xi\in X$.

\begin{remark}
Let us assume that we have given  a $Z$--valued L\'evy process $L$ and consider in the following equation
\DEQSZ\label{spdeslevy}
\lqq{ dw(t) =\lk(\DeltaA w(t)+ \bar F(\xi(t),w(t))\rk)\, dt }
&&
\\ &&{}+\Sigma(w(t))\,dW(t)+  G(w(t))\, d L(t),\quad w(0)=w_0\in \Bspace .\notag 
\EEQSZ
Then,  for $g$  defined in \eqref{coefflevy}, the system  \eqref{spdeslevy} is equivalent to the system \eqref{spdes}.
\end{remark}

Usually, in order to apply a Schauder-Tychonoff type theorem, it is necessary to provide a bounded convex subset of $\Mcal_{\Afrak}^{m}(\X)$, such that the operator $\Vcal_{\Afrak,W,\eta }$ maps this set into itself.
Here, it is essential to characterize the set in such a way that  one can
 transfer the definition to the set of probability measures.  
{Therefore, let us fix two} measurable functions $\Phi:\DD(0,T;U)\to\RR$ and $\Psi:\DD(0,T;U)\to\RR\cup\{\infty\}$ such that for any bounded closed interval $I$ the set $\Psi^{\leftarrow}(I)$\footnote{For a measurable function $f:X\to\RR$, $f^{\leftarrow}$ denotes the preimage 
given by $\{ x\in X: f(x)\in A\}$ for all $A\in\CB(I)$.} is closed in $\mathbb{X}$.
Let us now define for any   $R>0$  
a subset 
 $\mathcal{K}_{R}(\mathfrak{A})$ of $\mathcal{M}_\MA^m(\mathbb{X})$ by
 \DEQSZ\label{characteriseK}
 \mathcal{K}_{R}(\mathfrak{A})&:=&\lk\{ \xi\in \mathcal{M}_\MA^m(\mathbb{X}):\EE\Phi(\xi)\le R^m \mbox{ and } \PP\lk(\Psi(\xi)<\infty\rk)=1\rk\}.
 \EEQSZ
The set $\mathcal{K}_R(\MA)$ has to be chosen in such a way that we can assume that \eqref{spdes} is well-posed and a unique strong solution (in the stochastic sense) $w\in\Mcal_{\MA}^m(\X)$ exists for all $\xi\in\Mcal_{\MA}^m(\X)\cap \mathcal{K}_R(\MA)$.

Since we rely on a compactness argument, the solution will be obtained in the probabilistic weak sense.
For the convenience of the reader and to keep the presentation self-contained, we recall here the notion of a probabilistic weak solution, also referred to as a martingale solution.

\begin{definition}[Weak solution in the probabilistic sense]\label{Def:mart-sol}
A {\sl  weak solution} in the probabilistic sense to the problem
\eqref{spdes} for initial data $u_0 \in X$ is a tuple 
{ \begin{equation}
\left(\Omega ,{{\mathcal{F}}},{\mathbb{F}},\mathbb{P},
W, \eta, u\right)
\label{mart-system}
\end{equation}}
such that
\begin{enumerate}
\item the quadruple $\mathfrak{A}:=(\Omega ,{{\mathcal{F}}},{\mathbb{F}},\mathbb{P})$ is a complete filtered
probability space with a filtration ${\mathbb{F}}=(\mathcal{F}_t)_{t\in [0,T]}$ satisfying the usual conditions,
\item $W$ is a  $\CH$-valued  Wiener processes over the probability space
$\mathfrak{A}$ with covariance operator $\mathcal{Q}$,
\item { $\eta$ is a  time-homogeneous Poisson random measure on $(Z,\CZ)$ with L\'evy measure $\nu$ over the probability space $\mathfrak{A}$.}
\item the process $u:[0,T]\times \Omega \to U$ is 
${\mathbb{F}}$-adapted
such that $u$ is a weak solution of the system \eqref{spdes} over the probability space $\mathfrak{A}$ in the sense of Definition \ref{def:singleSLN}.
 \end{enumerate}
\end{definition}

Now, we give our main theorem, which is a stochastic variant of the (deterministic) Schauder-Tychonoff fixed point theorem(s) from \cite[§ 6--7]{granas}.

\begin{theorem}\label{ther_main}
Let $\WienH$ be a Hilbert space, $Z$ a measurable Polish space, and $\nu$ a L\'evy measure defined on $(Z, \CZ)$.
Let 
\DEQSZ\label{probabilityspace}
\mathfrak{A}=\lk(\Omega,\PP,\CF,(\CF_t)_{t\in[0,T]}\rk)
\EEQSZ
 be a given filtered probability space carrying a \erika{$\WienH$}-valued Wiener process $W$ and a Poisson random measure $\eta\in\CM_{\bar\NN}(Z_n\times [0,T])$.
 Let us assume that $A,F,\Sigma$, and $g$ satisfy Assumption \ref{standingassumption}.

Let $m>1$ and let  two measurable functions $\Phi:\DD(0,T;U)\to\RR$ and $\Psi:\DD(0,T;U)\to\RR\cup\{\infty\}$ be given,
such that for any bounded closed interval $I$ the set $\Psi^{\leftarrow}(I)$ is closed in $\mathbb{X}$.
Let $\mathcal{K}_{R}(\mathfrak{A})$  be defined  for any $R>0$ by \eqref{characteriseK}. Let us assume that the operator $\Vcal_{\Afrak,W,\eta}$, defined by \eqref{operator_def}, restricted to $\Xcal_{R}(\Afrak)$
satisfies the following assumptions:
\begin{enumerate}
\item the operator $\Vcal_{\Afrak,W,\eta}$ is well posed on $ \Xcal_{R}(\Afrak)$ for all $R>0$;
\item there exists $R_0>0$ such that for all $R\ge R_0$, the mapping $\Vcal_{\Afrak,W,\eta}$ is onto, i.e.\ 
$$\Vcal_{\Afrak,W,\eta}(\Xcal_{R}(\Afrak))\subset\Xcal_{R}(\Afrak),
$$
\item  for all  $R>0$ the restriction $\Vcal_{\Afrak,W,\eta}\big|_{\Xcal_{R}(\Afrak)}$ is
continuous w.r.t.\ the strong topology of $\Mcal_{\Afrak}^{m}(\X)$,
\item for all $R>0$, there exist constants $K>0$, $m_0\ge 1$ such that
\[
\Eb|\Vcal_{\Afrak,W,\eta}(\xi)|_{\X^{\prime}}^{m_0}\le K\quad\text{for every}\quad \xi\in\Xcal_R(\Afrak),
\]
\item  there exist constants $K>0$ and  $m_1> m$ such that
\[
\Eb|\Vcal_{\Afrak,W,\eta}(\xi)|_{\X}^{m_1}\le K\quad\text{for every}\quad \xi\in\Xcal_R(\Afrak),
\]
\end{enumerate}

Then, there exists a filtered probability space $\hat{\Afrak}=(\hat{\Omega},\hat{\Fcal},\hat{\mathbb{F}},\hat{\P})$
(that satisfies the usual conditions) together with a  $\WienH$-cylindrical Wiener process $\hat{W}$ with covariance $\mathcal{Q}$ and Poisson random measure $\hat{\eta}\in\CM_{\bar\NN}(\{Z_n\}\times [0,T])$, both defined on  $\hat{\Afrak}$, 
and an element $w^\ast\in\Mcal_{\hat{\Afrak}}^{m}(\X)$ such that
for all $t\in[0,T]$, $\hat{\P}$-a.s.
\[
\Vcal_{\hat{\Afrak},\hat{W},\hat \eta}(w^\ast)(t)=w^\ast(t)
\]
for any initial datum $w^\ast(0):=w_0\in L^m(\Omega,\Fcal_0,\P;\Bspace)$.
Here, the operator $\Vcal_{\hat{\Afrak},\hat{W},\hat \eta}$ is defined for $\xi\in\mathcal{K}_R({\hat{\MA}})$ by 
the solution of \eqref{spdes}, where 
$$
 \mathcal{K}_{R}(\hat{\mathfrak{A}}):=\lk\{ \xi\in \mathcal{M}_{\hat \MA}^m(\mathbb{X}):\hat \EE\Phi(\xi)\le R^m \mbox{ and } \hat \PP\lk(\Psi(\xi)<\infty\rk)=1\rk\},
$$
the Wiener process $W$ and the Poisson random measure $\eta$ are replaced by $\hat W$ and $\hat \eta$, and the underlying probability space $\MA$ by $\hat{\MA}$.
\end{theorem}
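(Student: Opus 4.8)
The plan is to deduce the theorem from the deterministic Schauder--Tychonoff theorem (Theorem~\ref{thm:Schauder}) applied \emph{not} to $\Vcal_{\Afrak,W,\eta}$ directly --- by hypotheses (iv)--(v) its image is only relatively compact in law, not in the strong topology of $\Mcal^m_{\Afrak}(\X)$ in which (iii) gives continuity --- but to the map it induces on a compact convex set of joint laws of $(\xi,W,\eta)$, and then to upgrade the resulting fixed point in law to a genuine pathwise fixed point on a new filtered probability space produced by the Skorokhod representation theorem. I first fix $R\ge R_0$, so that $\Vcal:=\Vcal_{\Afrak,W,\eta}$ is well defined on $\Xcal_R(\Afrak)$ and maps it into itself by (i)--(ii), and I record that well-posedness of the linearised equation \eqref{spdes} (including pathwise uniqueness) turns its solution into a measurable functional $\Vcal(\xi)=\Scal(\xi,W,\eta)$ of the input and the driving pair; in particular $\Law(\Vcal(\xi),W,\eta)$ depends on $\xi$ only through $\Law(\xi,W,\eta)$.

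Realising the cylindrical Wiener process in an ambient separable Hilbert space $\WienH_1\supset\WienH$ through a Hilbert--Schmidt embedding, so that $W$ takes values in the Polish space $C([0,T];\WienH_1)$, and using that the PRM space $M_{\bar\NN}(\{S_n\times[0,T]\})$ is Polish (Lemma~\ref{measure_lemma}), I let $\K_R$ be the closed convex hull, in the space of Borel probability measures on $\X\times C([0,T];\WienH_1)\times M_{\bar\NN}(\{S_n\times[0,T]\})$, of the family $\{\Law(\Vcal(\xi),W,\eta):\xi\in\Xcal_R(\Afrak)\}$. This family is ``essentially convex'' --- one mixes two inputs along an $\Fcal_0$-measurable Bernoulli variable independent of $(W,\eta)$, which keeps the mixture inside $\Xcal_R(\Afrak)$ since $\EE\Phi$ is additive under a disjoint mixture and $\{\Psi<\infty\}$ is mixing-stable --- so $\K_R$ is convex, and every $\mu\in\K_R$ is the law of some $\xi_\mu\in\Xcal_R(\Afrak)$. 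By (iv), the compact embedding $\X'\hookrightarrow\X$ of $\X'=L^m(0,T;X')\cap\mathbb{W}^{\alpha}_m(0,T;X_0)$ (Appendix~\ref{dbouley-space}) and Chebyshev's inequality, the $\X$-marginals of the family are tight, while (v) with $m_1>m$ yields uniform integrability of $|\cdot|_\X^m$; the $W$- and $\eta$-marginals are single measures. Hence $\K_R$ is weakly compact, and the constraints $\EE\Phi\le R^m$ and $\PP(\Psi<\infty)=1$ pass to weak limits thanks to the portmanteau theorem, the closedness of $\Psi^\leftarrow(I)$ in $\X$, and lower semicontinuity of the relevant norms.

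I then define $\Upsilon:\K_R\to\K_R$ by $\Upsilon(\mu):=\Law(\Vcal(\xi_\mu),W,\eta)$ for any $\xi_\mu\in\Xcal_R(\Afrak)$ with $\Law(\xi_\mu,W,\eta)=\mu$; by the remark on $\Scal$ this is independent of the representative and, by (ii), maps $\K_R$ into itself (it is in fact affine). Weak continuity of $\Upsilon$ reduces to (iii): for $\mu_n\to\mu$ I realise the inputs, apply the Skorokhod representation theorem --- in Jakubowski's form to accommodate the non-metrisable PRM factor --- to obtain a.s.-convergent copies $(\tilde\xi_n,\tilde W_n,\tilde\eta_n)\to(\tilde\xi,\tilde W,\tilde\eta)$, promote this to convergence in $\Mcal^m(\X)$ using the $m_1$-uniform integrability, transfer (iii) to the new space, and take laws. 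Theorem~\ref{thm:Schauder} now produces $\mu^\ast\in\K_R$ with $\Upsilon(\mu^\ast)=\mu^\ast$. To extract the solution I pick $\xi^\ast$ with $\Law(\xi^\ast,W,\eta)=\mu^\ast$, note $\Law(\Vcal(\xi^\ast),W,\eta)=\mu^\ast$, run $\xi^{(0)}=\xi^\ast$, $\xi^{(n+1)}=\Vcal(\xi^{(n)})$ --- all $(\xi^{(n)},W,\eta)$ share the law $\mu^\ast$, hence all consecutive pairs $(\xi^{(n)},\xi^{(n+1)})$ one and the same law --- pass to a weak subsequential limit of $\Law(\xi^{(n)},\xi^{(n+1)},W,\eta)$, and apply Skorokhod/Jakubowski again to get $(\hat\xi,\hat\zeta,\hat W,\hat\eta)$ on $\hat{\Afrak}=(\hat\Omega,\hat\Fcal,\hat{\mathbb{F}},\hat\P)$ with $\hat{\mathbb{F}}$ the augmented right-continuous filtration they generate. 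Using the martingale characterisation of the Wiener integral (L\'evy's theorem) and of the compensated Poisson integral (Definition~\ref{def-Prm}, Remark~\ref{compensator}), one checks that $\hat W$ is still an $\hat{\mathbb{F}}$-cylindrical Wiener process and $\hat\eta$ an $\hat{\mathbb{F}}$-PRM with intensity $\nu$, and that the limit satisfies $\hat\zeta=\Vcal_{\hat{\Afrak},\hat W,\hat\eta}(\hat\xi)$ by passing to the limit in \eqref{spdes} with the growth bounds of Assumption~\ref{standingassumption} and continuity (iii) on $\hat{\Afrak}$. Since $\bar F(\xi,\xi,\cdot)=F(\xi,\cdot)$, once $\hat\xi=\hat\zeta=:w^\ast$ is established this says exactly that $w^\ast$ solves \eqref{spdesproblem}, i.e.\ $\Vcal_{\hat{\Afrak},\hat W,\hat\eta}(w^\ast)=w^\ast$; the initial datum $w_0$ is preserved because every iterate starts there, and $w^\ast\in\Mcal^m_{\hat{\Afrak}}(\X)$ follows from Fatou's lemma and (v).

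The genuinely delicate step --- everything else being a careful but routine assembly --- is the identification $\hat\xi=\hat\zeta$ $\hat\P$-a.s., i.e.\ that the Schauder--Tychonoff fixed point in law is supported on a pathwise fixed point rather than merely reproducing its own law. This is where the uniqueness half of well-posedness (i) must be used to exclude non-trivial ``rotational'' dynamics of $\Vcal$ on the support of $\mu^\ast$: concretely, one argues, in the spirit of a Gy\"ongy--Krylov argument, that the common law of the consecutive pairs $(\xi^{(n)},\xi^{(n+1)})$ (or a subsequential limit of it) is concentrated on the diagonal $\{\hat\xi=\hat\zeta\}$. A secondary, purely technical burden is that each of the two Skorokhod steps changes the probability space, so one must re-verify throughout that the new noise is of the prescribed type, that all estimates in Assumption~\ref{standingassumption} and hypotheses (i)--(v) survive the transfer, and that the limit retains the progressive measurability required for $w^\ast\in\Mcal^m_{\hat{\Afrak}}(\X)$ and for $\Vcal_{\hat{\Afrak},\hat W,\hat\eta}$ to be well defined on $\Xcal_R(\hat{\Afrak})$ as in \eqref{characteriseK}.
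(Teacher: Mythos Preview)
Your overall strategy---applying Schauder--Tychonoff at the level of laws and then upgrading via Skorokhod---is sound up to the identification step, and the compactness and convexity you outline are essentially correct. However, the step you yourself flag as ``genuinely delicate'' is a real gap that a Gy\"ongy--Krylov argument cannot close. That criterion requires pathwise uniqueness of the \emph{limit} equation; here the limit equation would be the full nonlinear fixed-point equation $\xi=\Vcal(\xi)$, i.e.\ \eqref{spdesproblem}, whereas hypothesis~(i) only gives pathwise uniqueness of the \emph{linearised} equation \eqref{spdes} for each fixed input $\xi$. This merely makes $\Vcal$ a well-defined map and says nothing about its fixed points. Since all your pairs $(\xi^{(n)},\xi^{(n+1)},W,\eta)$ share one and the same law from the outset, there is no limit to take and no uniqueness input to invoke: a law $\mu^\ast$ invariant under $\Upsilon$ can perfectly well support a nontrivial ``rotation'' $\xi\mapsto\Vcal(\xi)$ with $\xi\neq\Vcal(\xi)$ a.s.\ (think of an involution preserving a symmetric distribution).

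The paper resolves this not by any uniqueness argument but by a structural device. It first time-discretises, replacing $\Vcal$ by $\hat\Vcal^\kappa:=\widehat{\Pro}_\kappa\,\Vcal$, where $\widehat{\Pro}_\kappa$ is a \emph{shifted} Haar projection onto piecewise-constant processes on a dyadic grid. The shift introduces a one-step causal lag: the value of $\hat\Vcal^\kappa(\xi)$ on $[t^\kappa_k,t^\kappa_{k+1})$ depends only on $\xi|_{[0,t^\kappa_k)}$ (Remark~\ref{eulerscheme}). Schauder--Tychonoff is applied at each level $\kappa$ on measures (Step~II), giving a fixed point $\mathscr P^\ast_\kappa$ in law; then---and this replaces your missing step---a genuine pathwise fixed point $w^\ast_{\kappa,\infty}$ of $\hat\Vcal^\kappa$ is \emph{constructed explicitly} by a finite recursion (Step~IV): starting from any representative $w^\ast_\kappa$ with law $\mathscr P^\ast_\kappa$, one overwrites it interval by interval with $\hat\Vcal^\kappa$ of the previous iterate. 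Each overwrite preserves the law because $\mathscr P^\ast_\kappa$ is $\hat\Vcal^\kappa$-invariant, and after $2^\kappa$ steps the causal lag forces $\hat\Vcal^\kappa(w^\ast_{\kappa,\infty})=w^\ast_{\kappa,\infty}$ $\PP^\ast$-a.s. Finally one lets $\kappa\to\infty$ along a tight subsequence, using the uniform $\X'$-bound~(iv), the continuity~(iii), and the approximation property of $\widehat{\Pro}_\kappa$ (Remark~\ref{projection}). The discretisation is thus not a technical convenience but the mechanism that converts a law-level fixed point into a pathwise one without assuming uniqueness for the nonlinear problem.
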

\begin{remark}
Clearly,  $\Phi$ represents the order of the moment condition for the laws in $\mathcal{K}_{R}(\hat{\mathfrak{A}})$, while $\Psi$ may allow to represent an additional desired property such as non-negativity of the solutions. 
However, it is not necessary in the proof.  
\end{remark}


Before starting with the proof of Theorem~\ref{ther_main},
let us introduce the following definition.
\begin{definition} 
Let $W$ 
and $\eta\in{\CM_{\bar{\Bbb N}}(\{Z_n\times\Bbb [0, T]\})}$ be the Wiener process and the Poisson random measure introduced before.
\begin{itemize}
\item
Then we define 
\begin{equation}\label{etat1}
\mathcal{W}_t(\phi)=
\sigma\left(\left\{{W}(s):0\le s\le t\right\}\right), \,
\mathcal{W}^t=\sigma\left(\left\{ 
{W}(s)-{W}(t) 
:t< s\le T\right\}\right) ,
\end{equation}
\item
and
\begin{equation}\label{etat2}
\eta_t(B)=\eta(B\cap({Z}\times(0,t])),\,\eta^t(B)=\eta(B\cap({Z}\times(t,T] )),\,B\in{\mathcal Z\otimes\mathscr B(\Int )}.
\end{equation}
\end{itemize}
\end{definition}

\begin{proof}[Proof of Theorem~\ref{ther_main}]
Fix
$\Afrak$, the Wiener process $W$, and the Poisson random measure~$\eta$. 

{In step (I), we approximate  by the shifted Haar projection the processes, such that they are simple, predictable, and $V$-valued and compose this approximation with $\mathcal{V}_{\Afrak,W,\eta}$ and get $\mathcal{V}^\zaehler_{\Afrak,W,\eta}$. These approximations $\mathcal{V}^\zaehler_{\Afrak,W,\eta}$  induce an operator $\mathscr{V}_\zaehler$ on the set of probability measures on $\mathbb{X}$. In Step (II), we show that the discrete operators $\{\mathscr{V}_\zaehler:\zaehler\in\NN\} $ are satisfying the hypothesis of the Schauder-Tychonoff fixed point theorem. Applying the Schauder-Tychonoff fixed point theorem gives a sequence of fixed point, which are, in fact, only probability measures on $\mathbb{X}$. In Step (III) we show that the sequence of probability measures $\{\mathscr{P}_\zaehler :\zaehler \in \NN\}$ on $\mathbb{X}$ are tight and, therefore, there exists a weakly convergence subsequence and a limiting probability measure $\mathscr{P}^\ast$.
In Step (IV)  we construct a filtered probability space, such that for each probability measure $\mathscr{P}^{\ast}_\zaehler $ a process exists being a fixed point of $\mathcal{V}_{\widetilde {\Afrak},\widetilde{W},\widetilde{\eta}}$. In Step (V) we apply the usual Skorohood representation Theorem, to get a stochastic processes defined over a probability space.
In Step (VI) we reconstruct an underlying Poisson random measure for the SPDE.  Step (VII) is dedicated to the verification that, indeed, the so-constructed process solves the original martingale problem. }

\begin{step}
\item 

First, let $\zaehler \in\NN$ and let us introduce a dyadic time grid $\pi_\zaehler =\{t^\zaehler_0=0<t^\zaehler_1<t^\zaehler_2<\cdots <t^\zaehler_{2^\zaehler}=T\}$
by $t^\zaehler_k= T\frac{k}{ 2^\zaehler}$, $k=0,\ldots, 2^\zaehler-1$. A process $\xi\in \mathcal{M}^m_\MA(\mathbb{X})$
is approximated by an averaging operator over the preceding time interval.
In particular, 
let for $k\ge 1$
\begin{eqnarray}\label{hatdefined}
 (\Pro_\kl \xi )(s) &:=&
\displaystyle
 \frac {2^\zaehler} T\int_{t^\zaehler_k}^{t^\zaehler_{k+1}} \xi (r)\: dr, \mbox{ if
 } s\in [t^\zaehler_k,t^\zaehler_{k+1}), \\ &&\qquad k=1,\ldots ,2^\zaehler-1.
\notag 
 \end{eqnarray}
 For $k=0$ let $\{x_0^\zaehler:\zaehler\in\NN\}\subset V$ be a seqeunce such that $x_0^\zaehler\to w_0$ in $X$. Then put  $ (\Pro_\kl \xi )(s) :=x_0^\zaehler$ for $s\in[0,t_1^\kappa)$.
In this way, for any $\zaehler\in\NN$ and for any $\xi\in\mathcal{M}^m(\mathbb{X})$, we get a simple, 
 piecewise  constant,  $X$-valued  process ${\Pro}_\zaehler  \xi$.
 However, to define later on the It\^o integral, we need a progressive measurable process.
 To get a progressive measurable process, we shift the time intervals. First, let us define the starting value for the first interval.
 Let $\xi_0^\zaehler  :\Omega\to V^\ast$, $\CF_0$-measurable, such that  
\DEQSZ\label{initconv}
\lk( \frac {2^\zaehler }T\rk)^m \EE|\xi_0^\zaehler -w_0|^m_X &\to & 0,
\EEQSZ
 as $\zaehler \to\infty$. Next, for any $\xi\in\mathcal{M}_\MA^m(\mathbb{X})$ let
\begin{equation}\label{eq:hatProj}
	\widehat{\Pro}_\kl \xi (s):=\bcase 
	\xi_0^\zaehler \qquad \qquad \qquad \qquad\text{if } \ s\in [0,t^\zaehler_1),\vspace{0.2cm}\\
	(\Pro_\kl\xi ){(s-\frac{T}{2^{\kl}})}~
	\qquad \text{ if } s\in [t^\zaehler_1,T].
	\ecase
\end{equation} 
 In this way, for any $\zaehler\in\NN$, for any $\xi\in\mathcal{M}^m(\mathbb{X})$, we get a simple, 
 piecewise  constant,  $X$-valued, and progressive measurable   process $\widehat {\Pro}_\zaehler  \xi$.
\begin{remark}\label{projection}
Observe, the projection $\widehat{\Pro}_\kl$ satisfies the following properties (compare to Appendix \ref{haarproj}):
%
%
\begin{enumerate}[leftmargin=0cm,itemindent=.5cm,parsep=5pt,labelwidth=\itemindent,labelsep=0.0cm,align=left,topsep=0pt, label={{(\alph{*})  }}]
\item The projection $\widehat{\Pro}_\zaehler$ is a continuous operator from $\mathbb{X}$ into $\mathbb{X}$ (compare \eqref{1.2} and 
$\widehat{\Pro}_\kl$ is linear on $[T 2^{-\zaehler },T]$).
\item The projection $\widehat{\Pro}_\zaehler$ is a continuous operator from $\mathbb{W}^\alpha_m(0,T;X)$ into $\mathbb{W}^\alpha_m(0,T;X)$ (compare Lemma \eqref{boundinw}).
\item If $K$ is a bounded compact subset of $\mathbb{X}$, i.e. $K$ is bounded in $\mathbb{X}'$, 
then we know by Lemma \ref{convergenceproj}-\eqref{1.5} and \eqref{initconv}, that for all $\ep>0$ there exists a $\zaehler_0\in\NN$ such that
$$\| \widehat{\Pro}_\zaehler  \xi - \xi\|_{\mathbb{X}}\le \ep,\quad \xi\in K,\quad \forall \zaehler\ge \zaehler_0.
$$
\end{enumerate}
\end{remark}
\noindent Finally, let us define the operator
\begin{equation}\label{eq:hatV}
\hat{\mathcal{V}}^\zaehler  _{\mathfrak{A},W,\eta}(\xi):=  \lk( \widehat{\Pro}_\zaehler  \mathcal{V}_{\mathfrak{A},W,\eta}(\xi)\rk),\quad \xi\in \Kcal_R (\mathfrak{A}).
\end{equation}
\noindent Claim: For any $\ep>0$ there exists some $\zaehler  _0\in\NN$ such that we have for all $\zaehler  \ge \zaehler  _0$
\DEQSZ\label{convergence}
\lk(\EE \lk\|(\widehat \Pro_{\zaehler })\xi-\xi\rk\|_{\mathbb{X}}^m\rk)^\frac 1m<\ep ,\quad \xi\in \mathcal{V}^\zaehler_{\mathfrak{A},W,\eta }(\Kcal_R (\mathfrak{A})).
\EEQSZ
To see this, let us fix $\ep>0$. First, we know that there exists a $C>0$ such that for all $\xi\in \hat{\mathcal{V}} _{\mathfrak{A},W,\eta}(\mathcal{K}_R(\MA))$,
$$
\EE\| \xi\|_{\X'}\le C.
$$
Hence, by the Chebychev inequality, it follows that there exists a compact set $K_\ep\subset\X$ (which is bounded in $\X'$)
 such that
\DEQSZ\label{vorher}
\PP\lk( \hat{\mathcal{V}} _{\mathfrak{A},W,\eta}^\zaehler  (\xi) \not\in K_\ep\rk)  &\le& \frac C{\tilde R},
\quad
{\xi\in\mathcal{K}_R(\MA).}
\EEQSZ
Choose $\tilde R$ sufficiently large so that there exists 
$\tilde m$ with 
\[
\frac{1}{\tilde m} + \frac{1}{m_1} \leq 1,
\]
and such that 
\[
\left(\frac{C}{\tilde R}\right)^{\tilde m} \leq \frac{\vareps}{4}.
\]
%
Now, let $\zaehler _0\in\NN$ {be sufficiently large such
that}
$$
\|\widehat{\Pro}_
\kl  f- f\|_{\X}^m \le \frac \ep2,\quad f\in K_\ep ,\quad \forall \zaehler\ge\zaehler_0.
$$
Taking expectation, we get
\DEQS
\lqq{ \EE\| \hat{\mathcal{V}}^\zaehler  _{\mathfrak{A},W,\eta} (\xi) - {\mathcal{V}} _{\mathfrak{A},W,\eta}(\xi)\|^m_{\X}  }
\\
&\le&
\EE1_{\xi\in K_\ep}\| \hat{\mathcal{V}}^\kl _{\mathfrak{A},W,\eta} (\xi) - {\mathcal{V}} _{\mathfrak{A},W,\eta}(\xi)\|^m_{\X}
\\ &&{}+2^{m-1}\EE 1_{\xi\not \in K_\ep}\lk(\|
 \hat{\mathcal{V}} ^\zaehler_{\mathfrak{A},W,\eta} (\xi) \|^m_{\X}+\| \hat{\mathcal{V}} _{\mathfrak{A},W,\eta}(\xi)\|^m_{\X}\rk).
\EEQS
To estimate the first term we know that
\DEQS
\EE 1_{\xi\in K_\ep}\| \hat{\mathcal{V}}^\kl_{\mathfrak{A},W,\eta} (\xi) - {\mathcal{V}} _{\mathfrak{A},W,\eta}(\xi)\|^m_{\X}
\le 
\lk(\frac \ep 2\rk)\quad \forall \zaehler \ge \zaehler _0.
\EEQS
To estimate the second term we apply the H\"older inequality for $\tilde m>1$ and
{$\frac 1{\tilde m}+\frac 1{m_1} =  1$}
\DEQS
\lqq{
\EE 1_{\xi\not \in K_\ep}\lk(\|
 \hat{\mathcal{V}} ^\zaehler_{\mathfrak{A},W,\eta} (\xi) \|^m_{\X}+\|{\mathcal{V}} _{\mathfrak{A},W,\eta}(\xi)\|^m_{\X}\rk)
 }
&&
\\ &\le& \lk(\EE 1_{\xi\not \in K_\ep}^{\tilde m}\rk)^\frac 1{\tilde m}\,
\lk(\EE \lk(\|{\mathcal{V}} _{\mathfrak{A},W,\eta}(\xi) \|^{m_1}_{\X}+\| \hat{\mathcal{V}} ^\zaehler_{\mathfrak{A},W,\eta} (\xi)\|_{\X}^{m_1}\rk)\rk)^\frac 1{m_1}.
\EEQS
In addition, we know that  $\EE 1_{\xi\not \in K_\ep}^m= \EE 1_{\xi\not \in K_\ep}=\PP\lk( \|\xi\|_{\X'}\ge \tilde R\rk) $. By the Chebycheff inequality, taking into account \eqref{vorher},
we have verified \eqref{convergence} and shown the claim.

\begin{remark}\label{eulerscheme}
This remark illustrates the functioning of the shifted Haar projection. 
Let us assume that  $\xi\in \widehat\Pro_\zaehler  \,\mathcal{K}_R(\MA)$. Then, $\xi$  is a piecewise constant function on $\pi_k$, i.e. 
$
\xi(t)=\xi(s)$ for all $t,s\in[t^\zaehler_k,t^\zaehler_{k+1})$, $k=0,\ldots , 2^\zaehler -1$.
In addition, for all $t\in[0,T]$,  $\xi(t)$ with $t^\zaehler_k\le t$ is a $\CF_{t^\zaehler_k}$-measurable random variable. Let us fix $t^\zaehler_k$, where $0\le k\le 2^\zaehler -1$. Then, we have for $t\in [t^\zaehler_k,t^\zaehler_{k+1})$ 
\begin{align*}
\lqq{ {\mathcal{V}}_{\mathfrak{A},W,\eta }(\xi)(t)- {\mathcal{V}}_{\mathfrak{A},W,\eta }(\xi)(t^\zaehler_k)=
\int_{t^\zaehler_k}^ {t} A w(s)\, ds }
&
\\
& {}+ 
\int_{t^\zaehler_k}^ {t}\bar  F(w(s),\bar\xi_k )\, ds
 + \int_{t^\zaehler_k}^ {t} \Sigma(w(s) )\, dW(s)
 +  {\int_{t^\zaehler_k}^ {t^\zaehler_{k+1}}\int_Z g(w(s),z )\,\tilde{\eta}(dz,ds)},
\end{align*}
 where $w(s)={\mathcal{V}}_{\mathfrak{A},W,\eta }(\xi)(s)$ and
 $$\bar \xi_k=
 \frac 1{T2^\zaehler} \int_{t^\zaehler_{k-1}}^{t^\zaehler_{k}} \xi (r)\: dr.
 $$
In the next step, the average over $[t^\zaehler_k,t^\zaehler_{k+1})$ is calculated. Here, we get
\begin{align*}
\lqq{ \frac 1\tau \int_{t^\zaehler_k}^{t^\zaehler_{k+1}}{\mathcal{V}}_{\mathfrak{A},W,\eta }(\xi)(t)\, dt = {\mathcal{V}}_{\mathfrak{A},W,\eta }(\xi)(t^\zaehler_k)+
\frac 1\tau  \int_{t^\zaehler_k}^{t^\zaehler_{k+1}} \int_{t^\zaehler_k}^t Aw(s)\, ds\, dt
}&
\\
&{}
+
\frac 1\tau \int_{t^\zaehler_k}^{t^\zaehler_{k+1}}\int_{t^\zaehler_k}^ t \bar  F(w(s)
,\bar\xi_k)\, ds\, dt
+\frac 1 \tau  \int_{t^\zaehler_k}^ {t^\zaehler_{k+1}}\int_{t^\zaehler_k}^t  \Sigma(w(s) )\, dW(s)\, dt
 \\
 &{}+ \frac 1 \tau  \int_{t^\zaehler_k}^ {t^\zaehler_{k+1}}\int_0^ t \int_Z g(w(s),z )\,\tilde{\eta}(dz,ds) \, dt.
\end{align*}
Now, we apply the shift operator. To be more precise, we can write for $t\in (t^\zaehler_{k+1},t^\zaehler_{k+2}]$, 
 \begin{equation}
 \label{recursion}
\begin{aligned}
\lqq{ \hat {\mathcal{V}}^\zaehler_{\mathfrak{A},W,\eta }(\xi)(t) =  {\mathcal{V}}_{\mathfrak{A},W,\eta }(\xi)(t^\zaehler_k)+
\frac 1\tau  \int_{t^\zaehler_k}^{t^\zaehler_{k+1}} \int_{t^\zaehler_k}^t Aw(s)\, ds\, dt
}&
\\
&{}
+
\frac 1\tau \int_{t^\zaehler_k}^{t^\zaehler_{k+1}}\int_{t^\zaehler_k}^ t \bar  F(w(s)
,\bar\xi_k)\, ds\, dt
\\
&{}
 +\frac 1 \tau  \int_{t^\zaehler_k}^ {t^\zaehler_{k+1}}\int_{t^\zaehler_k}^t  \Sigma(w(s) )\, dW(s)\, dt+ \frac 1 \tau  \int_{t^\zaehler_k}^ {t^\zaehler_{k+1}}\int_0^ t \int_Z g(w(s),z )\,\tilde{\eta}(dz,ds) \, dt.
\end{aligned}
 \end{equation}
 In particular, the random variable  $\hat {\mathcal{V}}^\zaehler_{\mathfrak{A},W,\eta }(\xi)(t)$ 
 for $t\in (t^\zaehler_{k+1},t^\zaehler_{k+2}]$ is $\CF_{t^\zaehler_{k+1}}$--measurable and depends on ${\mathcal{V}}_{\mathfrak{A},W,\eta }(\xi)(t^\zaehler_k)$.
 \medskip
 If we suppose that $A$ is linear,  $\bar F$ is  linear in the first variable and independent of the time,
 we get
 $$
 \int_{t^\zaehler_k}^ {t^\zaehler_{k+1}}\bar  F(w(s),\bar\xi_k)\, ds
= \underbrace{ \int_{t^\zaehler_k}^ {t^\zaehler_{k+1}}w(s) \, ds}_{\tau \bar w_{k+1}}\, \bar F(1,\bar \xi_k)
=\tau \bar w_{k+1}\bar F(1,\bar \xi_k)=\tau  \bar F(\bar w_{k+1},\bar\xi_k).
$$
where $\bar w_{k+1}:=\frac 1 \tau \int_{t^\zaehler_k}^ {t^\zaehler_{k+1}}  w(s)\, ds$.
If, in addition,  $\Sigma$ and  $g$ are constant and independent of the time 
we get
\DEQS
\hat {\mathcal{V}}^\zaehler_{\mathfrak{A},W,\eta }(\xi)(t)
=  {\mathcal{V}}_{\mathfrak{A},W,\eta }(\xi)(t^\zaehler_k)
+{\tau}  \lk( A\bar\xi_k + \bar F(\bar w_{k+1},\bar\xi_k)\rk)
+  \Sigma\, (W(t^\zaehler_{k+1}-W(t^\zaehler_k))+
\zeta_{k+1},
\EEQS
where 
$$
\zeta_{k+1}=\int_{t^\zaehler_k}^{t^\zaehler_{k+1}}  \int_Z g(z)\,\tilde \eta(dz,ds)  .
$$
$\square$
\end{remark}

\item
\newcommand{\Pros}{\mathscr{P}\mbox{\sl roj}}
Given the probability space $\mathfrak{A}=\lk(\Omega,\PP,\CF,(\CF_t)_{t\in[0,T]}\rk)$ (which is the probability space given in \eqref{probabilityspace} of 
Theorem~\ref{ther_main}),
for any $\zaehler  \in\NN$ the operator $\mathcal{V}^\zaehler  _{\mathfrak{A},W,\eta}$ is defined on $\mathcal{M}_\MA^m(\X)$. Now, this operator $\mathcal{V}^\zaehler  _{\mathfrak{A},W,\eta}$
induces  an operator $\mathscr{V}_\zaehler $ on the set of Borel probability measures on $\mathbb{X}$, denoted by  $\mathscr{M}_1(\mathbb{X})$. The construction of the operator $\mathscr{V}_\zaehler $ is done in this step. 

Since it is important that we have simple processes,
we define firstly an operator $\Pros_\zaehler $ acting on $\mathscr{M}_1(\X)$. To do this,  
let $\mathscr{Q}\in \mathscr{M}_1(\X)$. By the Skorokhod lemma, it follows that there exists a probability  space $\hat {\MA}=(\hat\Omega,\hat {\CF},\hat \PP)$ and a $\X$-valued random variable $\xi$ over $\hat{\MA}$ such that $\Law(\xi)=\mathscr{Q}$. Let $\Pros_\zaehler  \mathscr{Q}$ be given by
$$
\Pros_\zaehler  \mathscr{Q}:\mathscr{B}(\mathbb{X})\ni {B}\mapsto \hat \PP\lk( \lk\{ \omega\in\hat\Omega :\Pro_\zaehler  \xi(\omega)\in {B}\rk\}\rk)\in[0,1],
$$
and the compact set 
\DEQS
\lqq{ \mathscr{K}_{\zaehler ,R}:=\lk\{ P\in \Pros_\zaehler  \mathscr{M}_1(\mathbb{X}):\int_{\mathbb{X}}\Phi(\xi)dP(\xi)\le R\rk.
}
\\
&& \lk. \phantom{\int_{\mathbb{X}}}
\qquad
\mbox{ and } P\lk(\{\xi\in\mathbb{X}:\Psi(\xi)<\infty\}\rk)=1\rk\}.
\EEQS
Since $X\subset U$, $\Pro_\zaehler \mathbb{X}\subset \mathbb{U}$, and the measurable functions $\Phi$ and $\Psi$ is well-defined.

Now we define the operator $\mathscr{V}_\zaehler $ acting on $ \mathscr{K}_{\zaehler ,R}$. 
 Let $\mathscr{Q}$ be a  probability measure belonging to $ \mathscr{K}_{\zaehler ,R}$. Then, by the Skorokhod theorem, we know that there exists a probability space $\mathfrak{A}_0=(\Omega_0,\CF^0,\PP_0)$ and a random variable $\xi:\Omega_0\to \Pro_\zaehler \mathbb{X}$ such that the law of $\xi$ coincides with the law $\mathscr{Q}$. In particular, the probability measure $\mathscr{P}_\xi:\mathscr{B}(\mathbb{X})\to[0,1]$ induced by $\xi$ and given by
 $$
\mathscr{P}_\xi:\mathscr{B}(\mathbb{X})\ni B\mapsto \PP_0\lk( \lk\{ \omega{\in \Omega_0}:\xi(\omega)\in {B}\rk\}\rk)
$$
coincides with the probability measure $\mathscr{Q}$.

Due to the construction of the set $ \mathscr{K}_{\zaehler ,R}$,  we know that $\xi$ is a simple stochastic process,  i.e.\ for any $s\in[0,T]$, $\xi(s)$ is a $X$--valued random variable, and we can define a filtration.
Let
				$$
			 \CG^0_t:=\sigma \lk( \lk\{\,{\xi (s)}\;\colon\; 0\le s\le t \, \rk\}\cup \CN_0\rk),\quad t\in [0,T],
				$$
				where $\CN_0$ denotes the zero sets of ${\mathfrak{A}}_0$.
				Let us put $\mathfrak{A}_0:=(\Omega_0,\CF^0,(\CG_t^0)_{t\in[0,T]},\PP_0)$.

Next, we construct an extension of $\MA_0$ on which the Wiener process and the Poisson random measure are defined.
				To do so, let $\mathfrak{A}_1=(\Omega_1,\CF^1,(\CF_t)_{t\in[0,T]},\PP_1)$ be a probability space
				on which  a cylindrical Wiener process  $W:\Omega_1\to C([0,T];\WienH)$   
				and  a  time-homogeneous Poisson random measure  $\eta:\Omega_1\to{M_{\bar{\Bbb N}}(\{Z_n\times\Int \})}$  with intensity measure  $\nu$ are  defined.
Let $\mathfrak{A}_{\mathscr{Q}}$ be the product probability space of $\mathfrak{A}_0$ and $\mathfrak{A}_1$.
				In particular, we set
\DEQS
\Omega_\mathscr{Q} &:= &  \Omega_0\times \Omega_1,
\\
\CF_\mathscr{Q} &:=& \CF^0\otimes \CF^1,
\\
\CG^\mathscr{Q}_t &:=& \CG^0_t\otimes \CG^1_t,\, t\in[0,T],
\\
\mbox{and}\quad \PP_\mathscr{Q} &:=& \PP_0\otimes \PP_1.
\EEQS
Here, we know that $\xi$ at time $t$ is independent of $\mathcal{W}^t$  and independent of $\eta^t$. In particular, for a process $\zeta\in\Pro_\zaehler  \CM_\MA^m(\mathbb{X})$,  the integral
 $$
 [0,T]\ni t\mapsto \int_0^ t \Sigma(\zeta (s), {s})\, dW(s)
 \quad \mbox{and}\quad 
  [0,T]\ni t\mapsto \int_0^ t \int_Z g(\zeta (s),z)\, \tilde \eta(dz,ds)
$$
can be defined in the It\^o sense.

Next, we verify that the family of operators
$$
\lk\{ \Vcal^\zaehler _{\mathfrak{A}_\mathscr{Q},{W},\eta }: \zaehler \in\NN\rk\}
$$
is well-posed. This is given by item (i) in Theorem~\ref{ther_main}, 
and since $\mathscr{Q}\in\mathscr{K}_{\zaehler,R}$, that is, $\xi\in\Kcal_R (\mathfrak{A}_0)$, where 
\DEQS
\lqq{
\Kcal_R (\mathfrak{A}_0) :=\{ \xi:\Omega_0\to\mathbb{X}\mid  }
&&
\\
&&\quad 
\xi \in \CM^m(\X) \mid 
\EE_1\Psi(\xi)\le R,\mbox{ and }\PP_1\lk(\Phi(\xi)<\infty\rk)=1\}.
\EEQS
Now, let us assume that  $A\in\mathscr{B}( \mathbb{X})$. Then, the mapping $\mathscr{V}_\zaehler $ maps
the probability measure $\mathscr{Q}:\mathscr{B}(\mathbb{X})\to[0,1]$ via  $\xi$ onto the probability measure $\mathscr{P}_{ {\mathscr{V}_\zaehler (\mathscr{Q})}}:\mathscr{B}(\mathbb{X})\to[0,1]$ given by
$$
\mathscr{P}_{ {\mathscr{V}_\zaehler (\mathscr{Q})}}(B):=\PP_\mathscr{Q}\lk( \lk\{ \omega\in\Omega_\mathscr{Q}: \hat{\Vcal}^\zaehler _{\mathfrak{A}_\mathscr{Q},{W},\eta }(\xi(\omega))\in B\rk\}\rk).
$$
Note, since $\mathbb{X}$ is a complete metric space,
the space of probability measures over $\mathbb{X}$ equipped with the
Prokhorov metric\footnote{Let $\mathscr{M}_1(X)$ be the set of Borel probability measures on the metric space $(X,d)$ equipped with the weak topology. Let $\nu,\mu\in\mathscr{M}_1(X)$. Then
$$d_\alpha(\mu,\nu):=\inf\{\alpha>0: \mu(A)\le \nu(A_\alpha)+\alpha \mbox{ and }
\nu(A)\le \mu(A_\alpha)+\alpha \mbox{ for all } A\in\mathscr{B}(X)\}.
$$
Here, $A_\alpha:=\{ x\in X: d(x,A)<\alpha)$.}   is also complete.

The following points are valid: \label{seetightnss}
\begin{enumerate}
\item[a)]  $\mathscr{K}_{\zaehler ,R}$  is invariant under $\mathscr{V}_\zaehler $.
This follows directly from Assumption (ii) of the statement  
and the properties of the projection $\widehat \Pro_\zaehler $ (see Remark \ref{projection}).
\item[b)] Due to the fact that $\hat{\Vcal}^\zaehler _{\Afrak_{\mathscr{Q}},W,\eta}$ restricted to $\Kcal_R(\MA_\mathscr{Q}) $ is continuous, $\mathscr{V}_\zaehler $ restricted to $\mathscr{K}_{\zaehler ,R}$ is continuous on $\mathscr{M}_1(\mathbb{X}) $ in the Prokhorov metric.
This point follows from Theorem~11.7.1 \cite{dudley2002}.
\item[c)]  \label{seetightnss1}
The operator $\mathscr{V}_\zaehler $ restricted to $\mathscr{K}_{\zaehler ,R}$ is compact on $\mathscr{M}_1(\mathbb{X}) $, i.e.\
$\mathscr{V}_\zaehler $ maps bounded sets into pre-compact sets.
In fact, {it is sufficient} to show that for all $\zaehler \in\NN$ and $\ep>0$ there exists a compact subset $K_\ep\in \mathscr{B}(\mathbb{X})$ such that
$$
\forall \mathscr{Q}\in \mathscr{K}_{\zaehler ,R} \mbox{ we have }\mathscr{P}_ {\mathscr{V}_\zaehler (\mathscr{Q})}\lk (\mathbb{X}\setminus K_\ep\rk)\le \ep \quad \mbox{and}\quad \mathscr{P}_ {{ \mathscr{V}}_\zaehler (\mathscr{Q})}:=\mathscr{V}_\zaehler (\mathscr{Q}).
$$
However, due to (iv) in the statement there exists a $C>0$ such that
$$
\EE\|\mathcal{V}^\zaehler _{\mathfrak{A},W,\eta}(\xi)\|_{\mathbb{X}'}^m\le C,\quad \xi\in\Kcal_R (\mathfrak{A}). 
$$
Let $\tilde R>0$ so large that
$$
\frac {C}{\tilde R^m}\le \ep
$$
and let $ {K_{\ep}}:=\{x\in\mathbb{X}:\|x\|_{\mathbb{X}'}\le \tilde R\}$.
Due to the construction of the operator $\mathscr{V}_\zaehler $ we have
equality in law. In particular,
we know by the Chebyscheff inequality that
{\begin{eqnarray*}
&&\mathscr{V}_\zaehler (\mathscr{Q})\lk (\mathbb{X}\setminus K_\ep\rk)\\
&=&
\mathscr{P}_{ \mathscr{V}_\zaehler (\mathscr{Q})}\lk (\mathbb{X}\setminus K_\ep\rk)
=
\mathscr{V}_\zaehler (\mathscr{Q})\lk ({ \lk\{
x\in\mathbb{X}\mid \|x\|_{\mathbb{X}'} \ge R\rk\}}\rk)\\
&=& \PP_\mathscr{Q}\lk( \lk\{ \omega\in\Omega_\mathscr{Q}: \hat{\Vcal}^\zaehler _{\mathfrak{A}_\mathscr{Q},{W},\eta }(\xi(\omega))\in \mathbb{X}\text{ and }\left\|\hat{\Vcal}^\zaehler _{\mathfrak{A}_\mathscr{Q},{W},\eta }(\xi(\omega))\right\|_{\mathbb{X}'} \ge R\rk\}\rk)\\
&\leq& \PP_\mathscr{Q}\lk( \lk\{ \omega\in\Omega_\mathscr{Q}: \left\|\hat{\Vcal}^\zaehler _{\mathfrak{A}_\mathscr{Q},{W},\eta }(\xi(\omega))\right\|_{\mathbb{X}'} \ge R\rk\}\rk)\\
&\leq&\frac{\EE_\mathscr{Q}\left\|\hat{\Vcal}^\zaehler _{\mathfrak{A}_\mathscr{Q},{W},\eta }(\xi(\omega))\right\|_{\mathbb{X}'}^m}{\tilde R^m}\leq \frac {C}{\tilde R^m}\le \ep.
\end{eqnarray*}}
$$
\mathscr{V}_\zaehler (\mathscr{Q})\lk (\mathbb{X}\setminus K_\ep\rk)=
\mathscr{P}_{ \mathscr{V}_\zaehler (\mathscr{Q})}\lk (\mathbb{X}\setminus K_\ep\rk)
=
\mathscr{V}_\zaehler (\mathscr{Q})\lk ({ \lk\{
x\in\mathbb{X}\mid \|x\|_{\mathbb{X}'} \ge R\rk\}}\rk)
\le \epsilon$$
for all $\mathscr{Q}\in \mathscr{K}_{\zaehler,R}$.
Since $\mathbb{X}'\hookrightarrow \mathbb{X}$ compactly embedded,
we have proven the tightness.
\item[d)] We show, that $\mathscr{K}_{\zaehler,R}$ is a convex subset of $\mathscr{M}_1(\mathbb{X})$.
Let $\mathscr{P},\mathscr{Q}\in\mathscr{K}_{\zaehler,R}$, it is sufficient to show that for any $\alpha\in(0,1)$ we have $\alpha \mathscr{P}+(1-\alpha)\mathscr{Q}\in\mathscr{K}_{\zaehler,R}$.
First, we show that the expectation of $\Psi$ with respect to $\alpha \mathscr{P}+(1-\alpha)\mathscr{Q}$ is smaller than $R$. However, this follows by the linearity of the expectation. 
Secondly, we show that
$$
\lk(\alpha \mathscr{P}+(1-\alpha)\mathscr{Q}\rk)\lk(\{x\in\mathbb{X}:\Psi(x)<\infty\}\rk)=1.
$$
This can be shown by direct calculations. In fact, since $\mathscr{P},\mathscr{Q}\in\mathscr{K}_{\zaehler,R}$
we know that $\mathscr{P}\lk(\{x\in\mathbb{X}:\Psi(x)<\infty\}\rk)=1$ and $\mathscr{Q}\lk(\{x\in\mathbb{X}\mid \Psi(x)<\infty\}\rk)=1$. Let $\alpha\in(0,1)$. Then
\begin{align*}
&\lk(\alpha \mathscr{P}+(1-\alpha)\mathscr{Q}\rk)\lk(\{x\in\mathbb{X}:\Psi (x)<\infty\}\rk)
\\
&=\alpha \underbrace{\mathscr{P}\lk(\{x\in\mathbb{X}:\Psi(x)<\infty\}\rk)}_{=1}
+(1-\alpha)\underbrace{\mathscr{Q}\lk(\{x\in\mathbb{X}:\Psi(x)<\infty\}\rk)}_{=1}=1.
\end{align*}
Finally, we show that $\mathscr{P}\lk( \lk\{ x\in \Pro _\zaehler\mathbb{X}\rk\}\rk)=1$ and $\mathscr{Q}\lk( \lk\{ x\in \Pro_\zaehler \mathbb{X}\rk\}\rk)=1$ imply
\begin{align*}
&\lk(\alpha \mathscr{P}+(1-\alpha)\mathscr{Q}\rk)\lk( \lk\{ x\in \Pro_\zaehler\, \mathbb{X}\rk\}\rk)=1.
\end{align*}
But this follows by by straightforward calculations.

\end{enumerate}
Summarising, we know that the mapping $\mathscr{V}_\zaehler $ restricted to $\mathscr{K}_{\zaehler,R}$
satisfies the assumption of the Schauder-Tychonoff theorem.
Hence, for any $\zaehler \in\NN$ there exists a probability measure $\mathscr{P}^\ast_\zaehler $  such that $\mathscr{V_\zaehler }(\mathscr{P}^\ast_\zaehler )=\mathscr{P}^\ast_\zaehler $.

				\item
Note, since the estimate on $\mathbb{X}'$ is uniform for all $\zaehler\in\NN$,
the set
$$
\lk\{ \mathscr{P}^\ast_\zaehler \mid\zaehler \in\NN\rk\}
$$				
is tight. Therefore there exists a subsequence $\{\zaehler _j\mid j\in\NN\}$ and a Borel probability measure $\mathscr{P}^\ast$ such that
$ \mathscr{P}_{\zaehler _j}^\ast\to 	 \mathscr{P}^\ast$, as $j\to\infty$.	
In this step, we construct from the family of probability measures $\{\mathscr{P}^\ast_{\zaehler _j}\mid j\in\NN\}$ and $\mathscr{P}^\ast$, a filtered probability space $\mathfrak{A}^\ast$, a Wiener process
${W}^\ast$, a Poisson random measure $
\eta^\ast$, a
progressively measurable process $w^\ast$, and
a family of progressively measurable processes  $\{ w^\ast_{\zaehler _j}\mid j\in\NN\}$
				belonging a.s. to $\mathbb{X}$
	  over $\mathfrak{A}^\ast$ such that these objects have probability measures $\{\mathscr{P}^\ast_{\zaehler _j}\mid j\in\NN\}$ and $\mathscr{P}^\ast_{\zaehler _j}\in\mathscr{K}_{\zaehler_j,R}$.
	
Let us start. By the Skorokhod lemma \cite[Theorem~4.30]{kallenberg},
there exists a probability space $\mathfrak{A}_0^{\ast}=(\Omega^\ast_0,\CF^{\ast}_0,\PP^\ast_0)$ and a  sequence of $\mathbb{X}$-valued random variables $\{ {w}^\ast_{\zaehler _j}:j\in\NN\}$
and ${w}^\ast_{\zaehler _j}$ where
the random variable $w^\ast_{\zaehler _j}: \Omega^\ast_0\to \X$ such that
%
\DEQSZ\label{equallaw1}
\text{Law}({w}_{\zaehler _j}^\ast)=
\mathscr{P}_{\zaehler _j}^\ast,\quad j\in\NN.
\EEQSZ
In addition, we have 
\[
{w}^\ast_{\zaehler _j}\to{w}^\ast\quad \mbox{ as $j\to \infty$ }\quad  {\P}^\ast_0\text{-a.s.}
\]
on $\X$. 
Let us introduce the filtration $\mathbb{G}^\ast_0=(\mathcal{F}_t^{\ast,0})_{t\in[0,T]}$  given by
				$$
			 \CF^{\ast,0}_t:=\sigma \lk( \lk\{\,w _{\zaehler _j}^\ast(s),w^\ast(s),\;\colon\; 0\le s\le t, \,j\in\NN \rk\}\cup \CN_0^{\ast}\rk),\quad t\in [0,T],
				$$
				where $\CN_0^{\ast}$ denotes the zero sets of ${\mathfrak{A}}_0^\ast$. 
				
Next, we have  to construct  the Wiener process and the time-homogeneous Poisson random measure.
 Since we have only the processes $w^\ast$ and $w_{\zaehler _j}$ defined on our probability space $\mathfrak{A}^\ast_0$.
Let 
$$
{\mathfrak{A}_1=\lk(\Omega_1,\PP_1,\CF_1,(\CG^1_t)_{t\in[0,T]}\rk).}
$$
 be a filtered probability space
on which a  Wiener process   $W^\ast:\Omega_1\to C\lk([0,T];\WienH\rk)$ (with covariance operator $\mathcal{Q}$) and  a time-homogeneous Poisson random measure $\eta^\ast:\Omega_1\to{M_{\bar{\Bbb N}} \lk(\{Z_n\times[0,T] \}\rk)}$ with intensity measure $\nu$ are defined.
Let $\mathfrak{A}^\ast:=\mathfrak{A}_0^\ast\times \mathfrak{A}_1$. In particular, we put
\DEQS
\Omega ^\ast&= &  \Omega_0^\ast\times \Omega^\ast_1,
\\
\CF^\ast &=& \CF_0^\ast\otimes \CF_1^\ast,
\\
\CG^\ast_t &=& \CG^{0,\ast}_t\otimes \CG^{1,\ast}_t,\,t\in[0,T],
\\
\mbox{and}\quad \PP^\ast &=& \PP^\ast_0\otimes \PP_1.
\EEQS
In addition, let
$$\Kcal_R (\mathfrak{A}^\ast):=\lk\{ \xi\in \mathcal{M}_{\MA^\ast}^m(\mathbb{X}):\EE^\ast \Phi(\xi)\le R^m \mbox{ and } \PP^\ast \lk(\Psi(\xi)<\infty\rk)=1\rk\}.
$$
Let us remind that the  operators 
 ${\mathcal{V}}_{\mathfrak{A}^\ast,W^\ast,\eta^\ast}$ and $\hat{\mathcal{V}}^\zaehler  _{\mathfrak{A}^\ast,W^\ast,\eta^\ast}$ for $\xi\in \Kcal_R (\mathfrak{A}^\ast)$ are defined  by 
\DEQS
{\mathcal{V}}_{\mathfrak{A}^\ast,W^\ast,\eta^\ast}(\xi):=w,\, \mbox{ where $\xi$ solves \eqref{spdes}},
\EEQS
and
\DEQS
\widehat{\mathcal{V}}^\zaehler_{\mathfrak{A}^\ast,W^\ast,\eta^\ast} (\xi):=\widehat{\Pro}_\zaehler \,
{\mathcal{V}} _{\mathfrak{A}^\ast,W^\ast,\eta^\ast}(\xi),\quad \zaehler\in\NN.
\EEQS
Observe, the equation 	\eqref{spdes} is given, where the Wiener process and Poisson random measure $W$ and $\eta$ are replaced by $W^\ast$ and $\eta^\ast$, the underlying probability space is also replaced by $\MA^\ast$.			
				\item
			
Since $\mathscr{P}^\ast_{\zaehler _j}\in  {\mathscr{K}_{\zaehler ,R}}$, the process $w^\ast_{\zaehler _j}$ belongs to $ {\Kcal_R} (\mathfrak{A}^\ast)$ and, hence,
 $\hat{ \mathcal{V}}_{\mathfrak{A}^\ast,W^\ast,\eta^\ast}w^\ast_{\zaehler _j}$ is well defined.
Observe, that  $\hat{\mathscr{V}}_{\zaehler _j}(\mathscr{P}^\ast_{\zaehler _j})=\mathscr{P}^\ast_{\zaehler _j}$, 
does not imply {that} 
the process $w^\ast_{\zaehler _j}$ satisfies 
\DEQS
\PP\lk(
	\hat{\mathcal{V}}^{\zaehler _j}_{\MA^\ast,W^\ast,\eta^\ast}\lk(w^\ast_{\zaehler _j}\rk) (s) =w^\ast_{\zaehler _j}(s)\rk)&=&1 \quad \mbox{for} \quad 0\le s\le T
.
\EEQS

In this step, we construct here a fixed point, denoted by  $ w ^\ast_{\zaehler_j,\infty}$, to the operator $\hat{\mathcal{V}}^{\zaehler _j}_{\MA^\ast,W^\ast,\eta^\ast}$ and that the probability measure will not change.
Let us define a new process by induction. To start with let (here, $t^\zaehler_k=\frac {Tk}{2^\zaehler}$)
\DEQSZ\label{nummer11}
\tilde w^\ast_{\zaehler _j,1}(s) &:=&  \begin{cases}
 {\xi_0^{\zaehler _j}}& \mbox{ if } 0\le s<t^{\zaehler _j}_1,
\\
\lk(\hat{\mathcal{V}}^{\zaehler _j}_{\MA^\ast,W^\ast,\eta^\ast}(w^\ast_{\zaehler _j})\rk)(s)& \mbox{ if } t^\zaehler _1\le s\le T,
\end{cases}
\EEQSZ
 {where $\xi_0^{\zaehler _j} :\Omega\to V^\ast$, $\CF_0$-measurable, such that  
$$
\lk( \frac {2^{\zaehler _j}}T\rk)^m \EE||\xi_0^{\zaehler _j}-w_0||^m_X \to  0,
$$
as ${\zaehler _j}\to\infty$, 
see \eqref{initconv}.}
Clearly, in the time interval $[0, t^{\zaehler _j}_1)$ the law is the same.
In Remark~\ref{eulerscheme}, we have seen that 
the operator $\hat{\mathcal{V}}^{\zaehler _j}_{\MA^\ast,W^\ast,\eta^\ast}$ at time $t_{k}$ is $\CF_{t_k}$-measurable. 
Next, since the operator $\hat{\mathcal{V}}^{\zaehler _j}_{\MA^\ast,W^\ast,\eta^\ast}$ is invariant with respect to the measure $\PP^\ast$, we not change the law on the time interval $[t^\zaehler _1, T]$. 
Next, let us put
\DEQSZ\label{nummer21}
\tilde w^\ast_{\zaehler _j,2}(s) &:=&\begin{cases}
\tilde w^\ast_{\zaehler _j,1}(s)& \mbox{ if } 0\le s<t^{\zaehler_j}_2,
\\
\lk(\hat{\mathcal{V}}^{\zaehler _j}_{\MA^\ast,W^\ast,\eta^\ast}(\tilde w^\ast_{\zaehler _j,1})\rk)(s)& \mbox{ if } t^{\zaehler_j}_2\le s\le T.
\end{cases}
\EEQSZ
Again, since in the time interval $[0, t^{\zaehler _j}_2)$ the law is the same and, since the operator $\hat{\mathcal{V}}^{\zaehler _j}_{\MA^\ast,W^\ast,\eta^\ast}$ is invariant with respect to the measure $\PP^\ast$, we not change the law on the time interval $[t^\zaehler _2, T]$. 
Let us define the remaining part by induction.
Now, having defined $\tilde w^\ast_{\zaehler _j,k}$, let
\DEQSZ\label{nummerk1}
\tilde w^\ast_{\zaehler _j,k+1}(s) &:= & \begin{cases}
\tilde w^\ast_{\zaehler _j,k}(s)& \mbox{ if } 0\le s<t^{\zaehler_j}_{k+1},
\\
\lk(\hat{\mathcal{V}}^{\zaehler _j}_{\MA^\ast,W^\ast,\eta^\ast}(\tilde w^\ast_{\zaehler _j,k})\rk)(s)& \mbox{ if } t^{\zaehler_j}_{k+1}\le s\le T.
\end{cases}
\EEQSZ
Let us put $  w^\ast_{\zaehler_j,\infty}(s)= {\xi_0^{\zaehler _j}}$ for $t_0^{\zaehler _j}=0\le s< t_1^{ {\zaehler _j}}$, and
\DEQSZ\label{definfty1}
w_{\zaehler _j,\infty}^\ast(s) &:=&  \tilde  w^\ast_{\zaehler _j,k}(s),\quad \mbox{if}\quad t^{\zaehler _j}_{k}\le s< t^{\zaehler _j}_{k+1},\,\,k=1,\ldots, 2^{\zaehler_j}.
\EEQSZ
Our claim is now, that the process $\tilde  w^\ast_{\zaehler_j  ,\infty}$ satisfies 
\DEQSZ\label{isasolution1}
\PP^\ast \lk(\lk\{\omega\in\Omega^\ast\mid 
{\hat {\mathcal{V}}}^{\zaehler _j }_{\MA^\ast,W^\ast,\eta^\ast}\lk( w^\ast_{\zaehler_j ,\infty}\rk) (s) = w^\ast_{\zaehler _j,\infty}(s)\rk\}\rk)&=&1 \,\mbox{ for } \, 0\le s\le T
.
\EEQSZ
{In fact, on the one hand we have $\PP^\ast$-a.s. by definition \eqref{eq:hatV} 
\begin{eqnarray}
    { \hat{ \mathcal{V}}}^{\zaehler _j }_{\MA^\ast,W^\ast,\eta^\ast}\lk(  w^\ast_{{\zaehler _j } ,\infty}\rk)(s)= \lk( \widehat{\Pro}_{\zaehler_j}  \mathcal{V}_{\MA^\ast,W^\ast,\eta^\ast}(w^\ast_{{\zaehler _j } ,\infty})\rk)(s),\quad \xi\in \Kcal_R (\mathfrak{A}).
\end{eqnarray}}

By the definition of $\widehat{\Pro_{\zaehler _j}}$ on $[0,t_1^{\zaehler _j})$
for any $\xi\in\CM_{\MA^\ast}^m(\X)$ the process $\widehat{\Pro_{\zaehler _j}}\xi$ on $[0,t_{1}^{\zaehler _j})$ is defined by  {$\xi_0^{\zaehler _j}$ }.
In particular, we have  $\PP^\ast$-a.s.
$$
{ \hat{\mathcal{V}}}^{{\zaehler_j} }_{\MA^\ast,W^\ast,\eta^\ast}\lk(  w^\ast_{{\zaehler _j },\infty}\rk)(s)
=
 {\xi_0^{\zaehler _j}}, \quad \mbox{for} \quad 0\le s<t^{\zaehler _j}_1.
$$
On the other side, we have $ w^\ast_{\zaehler _j ,\infty}(s)= w^\ast_{\zaehler _j }(s)= {\xi_0^{\zaehler _j}}$ for $ 0\le s<t^{\zaehler _j}_1$.
Hence, \eqref{isasolution1} holds for $0\le s<t^{\zaehler _j}_1$,
$\PP^\ast$-a.s.
Let us consider the next time interval. 
Remember, the value at time $t_1^{\zaehler _j}$ depends on the value on the time interval $[0,t_1^{\zaehler _j})$. 
 {At time $s\in[0,t^{\zaehler _j}_1]$, we have by \eqref{definfty1}, $w^\ast_{{\zaehler _j } ,\infty}(s) =\tilde{ w}^\ast_{{\zaehler _j ,1 } }(s)$ and, therefore,}
$$
\lk({ \hat{\mathcal{V}}}^{{\zaehler_j} }_{\MA^\ast,W^\ast,\eta^\ast}\lk( w^\ast_{{\zaehler _j } ,\infty}\rk)\rk)(t_1^{\zaehler _j})
%
=\lk({\hat{\mathcal{V}}}^{{\zaehler_j} }_{\MA^\ast,W^\ast,\eta^\ast}  \lk(\tilde w^\ast_{{\zaehler_j},1 }\rk)\rk)(t^{\zaehler _j}_1)
. 
$$
Due to before, the process $ {w^\ast_{{\zaehler _j } ,\infty}}$ and $\tilde w^\ast_{\zaehler_j,1}$ are indistinguishable, therefore 
$\hat{\mathcal{V}}^{{\zaehler_j} }_{\MA^\ast,W^\ast,\eta^\ast}( {w^\ast_{{\zaehler _j } ,\infty}})$ and 
$\hat{\mathcal{V}}^{{\zaehler_j} }_{\MA^\ast,W^\ast,\eta^\ast} (\tilde w^\ast_{\zaehler_j,1})$ 
 are indistinguishable on $[0,t_1^{\zaehler _j})$, and, by \eqref{nummer11}
 we have $\PP^\ast$ a.s.
 $$\hat{\mathcal{V}}^{{\zaehler_j} }_{\MA^\ast,W^\ast,\eta^\ast} (\tilde w^\ast_{\zaehler_j,1})(t_1^{\zaehler_j})=\tilde w _{\zaehler_j,1}^\ast
 (t_1^{\zaehler_j}).
 $$
Due to the fact that the processes are simple, we know the equality holds on $[t_1^{\zaehler_j},t_2^{\zaehler_j})$.
However, by definition \eqref{definfty1}, we have $\tilde w^\ast_{{\zaehler_j} ,1}(s)= w^\ast_{{\zaehler _j } ,\infty}(s)$
for $s\in [t_1^{\zaehler_j},t_2^{\zaehler_j})$. It follows that   $\PP^\ast $-a.s. we have $ {\tilde w^\ast_{{\zaehler_j} ,1}(s)}=w^\ast_{{\zaehler_j} ,\infty}(s)$ for $ t^{\zaehler _j}_1\le s<t^{\zaehler _j}_2$, and hence
$$
\PP\lk( w^\ast_{{\zaehler_j} ,\infty }(s)=  {\hat{\mathcal{V}}^{{\zaehler_j} }_{\MA^\ast,W^\ast,\eta^\ast}} \lk(w^\ast_{{\zaehler_j} ,\infty }\rk)(s)\rk) =1,  \quad \mbox{for} \quad t^{\zaehler _j}_1\le s<t^{\zaehler _j}_2.
$$
Let us analyse what happens in $t^{\zaehler _j}_2$. By the definition \eqref{definfty1} we have $w^\ast_{{\zaehler_j} ,\infty}(t^{\zaehler _j}_2)= {\tilde w^\ast_{{\zaehler_j} ,2}(t^{\zaehler _j}_2)}$. Next, for $s\in[t_1^{\zaehler_j},t_2^{\zaehler_j})$ we know from before that 
$\PP^\ast$-a.s.
$$ 
\hat{\mathcal{V}} ^{{\zaehler_j} }_{\MA^\ast,W^\ast,\eta^\ast}
\lk( w^\ast_{{\zaehler _j } ,\infty}\rk)(s)=w^\ast_{{\zaehler _j } ,\infty}(s).
$$
Due to definition \eqref{nummer21}
we have 
$$
 {\hat{\mathcal{V}}^{{\zaehler_j} }_{\MA^\ast,W^\ast,\eta^\ast}}\lk(  w^\ast_{{\zaehler _j } ,\infty}\rk)(t_2^{\zaehler _j})
= {\hat{\mathcal{V}}^{{\zaehler_j} }_{\MA^\ast,W^\ast,\eta^\ast}}\lk( {\tilde w^\ast_{{\zaehler_j} ,2}}\rk)(t^{\zaehler _j}_2)=  {\tilde w^\ast_{{\zaehler_j} ,2}}(t^{\zaehler _j}_2).
$$
Again, by the definition \eqref{definfty1} we have  {$\tilde w^\ast_{{\zaehler_j} ,2}(t^{\zaehler _j}_2)= w^\ast_{{\zaehler _j } ,\infty}(t^{\zaehler _j}_2)$.}

Now, we can proceed by induction. Let us assume that in $[0,t_k)$ we have shown that
\DEQSZ\label{inductionstart}
\PP\lk(
 {\hat{\mathcal{V}}^{{\zaehler_j} }_{\MA^\ast,W^\ast,\eta^\ast}}\lk( w^\ast_{{\zaehler _j } ,\infty}\rk)
 (s) =w^\ast_{{\zaehler_j} ,\infty}(s)\rk)&=&1 \quad \mbox{for} \quad 0\le s\le t^{\zaehler _j}_k
.
\EEQSZ
Then,  we have by definition \eqref{nummerk1} on $t_k^{\zaehler _j}\le s<t_{k+1}^{\zaehler _j}$
$$
  {\hat{\mathcal{V}}^{{\zaehler_j} }_{\MA^\ast,W^\ast,\eta^\ast}}\lk(w^\ast_{{\zaehler_j} ,\infty }\rk) (s) = {\hat{\mathcal{V}}^{{\zaehler_j} }_{\MA^\ast,W^\ast,\eta^\ast}}\lk( {\tilde w^\ast_{{\zaehler_j} ,k}}\rk) (s)=
\tilde w^\ast_{{\zaehler_j} ,k}( {s})
$$
Since we have by definition \eqref{definfty1} it follows 
 $w^\ast_{{\zaehler_j} ,\infty}=\tilde w^\ast_{{\zaehler_j} ,k}$.

\item
\newcommand{\weg}[1]{{\color{yellow} #1}}
\newcommand{\Xw}{\tilde w}
\newcommand{\Xwt}{\tilde w}
\newcommand{\wii}{\widetilde{aa}}
Next, we verify the following statements with the goal to pass on to the limit. 
Let us recall that due to the construction of the operator, the laws of the set 	$$
	\left\{ \mathcal{V}_{\mathfrak{A}^{\ast},W^{\ast},\eta^\ast }^{\zaehler _j}(\xi) : \xi \in \mathcal{X}_R(\mathfrak{A}^{\ast}) , j\in\NN\right\}   
	$$
	are  tight in $\mathbb{X}$, and hence the set $\Law\{ w^\ast_{{\zaehler_j},\infty}: j\in\NN\}$  
    is also tight in $\mathbb{X}$. That means, there exists a subsequence 
    $\{ {\zaehler_{j_m}}:m\in\NN\}$ and a measure $\rho$ on $\mathbb{X}$ such that  $\Law( w^\ast_{\zaehler_{j_m}})\to \rho$ weakly as $m\to\infty$.
    
    {For the convenience of the reader, we now relabel the sequence $\{ w^\ast_{{\zaehler_{j_m}},\infty} : j \in \mathbb{N} \}$ by introducing a new sequence $\{ w_{m} : m \in \mathbb{N} \}$ defined via $w_m := w^\ast_{{\zaehler_{j_m}},\infty}$.} 
    In the first step, we apply the Skorokhod embedding theorem to get a sequence 
$\{ {\Xw} _m\}_{m\in\mathbb{N}}$  of $\mathbb{X}\cap \mathbb{D}(0,T;U)$-valued random variables 
and a $\mathbb{X}\cap \mathbb{D}(0,T;U)$-valued random variable $ \Xwt $ over a filtered probability space 
$\widetilde{\mathfrak{A}}=(\widetilde{\Omega}, \widetilde{\mathcal{F}}, \widetilde{\mathbb{P}})$ such that 
$\{ \Xw_m\}_{m\in\mathbb{N}}$ converges
$\widetilde{\PP}$-a.s.\ to the process $\Xwt^\infty$. 
In addition, we have 
$$
\Law(w_m)=\Law(\tilde w_m)\quad \forall m\in\NN, 
$$
and  $\Law(\tilde w^\infty)=\rho$.  
\item 
In this next step, we reconstruct the martingale parts of the processes 
$\{ \Xw _m\}_{m\in\mathbb{N}}$ and $\Xwt$ and the corresponding filtrations, such that the stochastic integral is represented. 
Then, we construct 
\begin{itemize}
    \item 
an extension\footnote{For the definition of extension, we refer to Appendix \ref{extension}.}
${\widetilde{\widetilde{\mathfrak{A}}}=(\widetilde{\widetilde{\Omega}}, \widetilde{\widetilde{\mathcal{F}}}, \widetilde{\widetilde{P}})}$ of the probability space $\widetilde{\mathfrak{A}}=(\widetilde{\Omega}, \widetilde{\mathcal{F}}, \widetilde{\mathbb{P}})$, 
\item a filtration {$(\widetilde{\widetilde{\mathcal{F}}}_ t)_{t\in[0,T]}$}, 
\item a Wiener process
{$\widetilde{\widetilde{W}}(\cdot)$}
 \item and a Poisson random measure {$\wi{\wi{\eta}}$} on $(Z_n,\CZ)$ over {$\widetilde{\widetilde{\mathfrak{A}}}$} ,
 \end{itemize}
 such that  $\{\Xw_m\}_{m\in\mathbb{N}}$ and  
 $\Xw$ solve the equations \eqref{spdesproblem} 
 on the extended probability space 
 $\widetilde{\widetilde{\mathfrak{A}}}$.
 \begin{remark}
     Let us note, that by the defintion of an extension, all random variable existing on $\widetilde{\mathfrak{A}}$
     will be carried over to the probability space $\widetilde{\widetilde{\mathfrak{A}}}$. To avoid confusion, we will denote the elements of $\widetilde{\mathfrak{A}}$ by $\wi{\wi{{\cdot}}}$.
     
 \end{remark}
 To start with, let us define 
 for each $m\in\N$, the following process $\widetilde{M}_m(\cdot)$ 
	\begin{align}\label{MT1}
		\wi{M}_m(t):= {\wi{w}_m}(t)-x -\int_0^t A(\Pro_m  {\wi{w}_m}(s))d s-\int_0^tF_m(\Pro_m  {\wi{w}_m}(s))d s, \quad t\in[0,T],
	\end{align}
Observe, {$w^\ast$ is defined on $\mathfrak{A}^\ast$ and $\wi{w}_{\zaehler_m,\infty}$ is defind on $\tilde{\mathfrak{A}}$.
On  $\tilde{\mathfrak{A}}$, we can only see the process and we have to find an extension such that we can define a Poisson random measure.}

By the L\'evy-It\^o decomposition 
we can identify the continuous part $\widetilde M^{c}_n$, the finite variational part $\widetilde M^{fv}_m$, 
and the discontinuous part $\widetilde M^{d}_m$ i.e.
$$ 
\widetilde M_m=\widetilde M^{c}_m+\widetilde M^{d}_m+\widetilde M^{fv}_m.
$$
\renewcommand{\Xw}{w}
\renewcommand{\Xwt}{ w}
%
Note, the continuous martingale part
has a quadratic variation given by 
\begin{align}\label{MT3}	
	\left\langle\left\langle 
    \widetilde M^{c}_m(t)\right\rangle\right\rangle
    =\int_{0}^{t}\left(\Sigma\left(\Pro_m \widetilde\Xwt _{m}(s)\right)\, {\mathcal{Q}^{1 / 2}}\right)\left(\Sigma\left(\Pro_m  \widetilde \Xwt _{m}(s)\right)\, {\mathcal{Q}^{1 / 2}}\right)^{*} d s, \quad t \in[0, T] . 
\end{align} 
 \erika{It should be in the formulation of the theorem, it is the covariance matrix}\question{If you want to keep it, please insert $\mathcal{Q}$ on the definition of wiener process $W$ in \eqref{eq:wiener}.}
\newcommand{\up}{{\bf p}}
\newcommand{\uq}{{\bf q}}
For the continuous part we can reconstruct the Wiener process along the lines of \cite{ankit}. 

Let us focus on the discontinuous part. 
Here, we want to introduce the following notation. 
\begin{remark}\label{pandq}
    {If we have in the index $\up$, we mean we investigate the random measure related to the point process in $U$. If we write $\uq$, we investigate that random measure, corresponding  to the point process in $Z$.}
        {If we are one the probability space $\mathfrak{A}^\ast$, we add some $\ast$ and use  $\up^\ast$, respective  $\uq^\ast$.}

    \end{remark}
\noindent For a parameter $\epsilon$ we define $Z_\epsilon:=\{z\in Z: |z|\le \epsilon\}$ and $Z_\epsilon^c:=Z\setminus Z_\epsilon$.
Fix a sequence \(\vareps>0\) such that $Z^c_{\ep}\subset Z^c_{\epsilon_{\zaehlerz-1}}$ and $\nu(Z^c_{\ep}\setminus Z^c_{\epsilon_{\zaehlerz-1}}) =1$, split \(Z\) into \(Z_\vareps=\{z\in Z~|~|z|_Z\le\vareps\}\) and \(Z^c_\ep=Z\setminus Z_\vareps\). Observe, due to the construction of the operator $\Pro_m$, we know that 
$\xi_{k,m}:=\Pro_m\tilde w_m(t^m_k)$ is $V$-valued for all $k=0,1,2,\cdots,2^m$ 
Due to the assumption on $g$, we know that  
$${\wi{\wi{\mathbb{E}}}}\int_Z  \Big(1\wedge |g(\xi_{k,m},z)|_X^2\Big) \nu(dz)<\infty, $$
and
$$\sup_{m\in\NN} {\wi{\wi{\mathbb{E}}}} \int_0^ T \int_Z  \Big(1\wedge |g(\Pro_m\tilde w_m(s),z)|_X^2\Big)\nu(dz)\, ds<\infty.
$$
 {Consider the random set of admissible jump increments of $\xi_{k,m}$ defined by } 
 $$V_\ep^{k,m}:=\{ x\in V:\exists z\in Z_\ep^c \mbox{ such that } g(\xi_{k,m} ,z)=x\}$$ and let 
 \DEQSZ\label{def_ell}
 {\ell}^{\vareps}_{k,m}:=\bcase  \widetilde {M}^d_m(t_{k+1}^m)- \widetilde {M}^d_m(t_{k}^m), & \mbox{if}\, \widetilde {M}^d_m(t_{k+1}^m)- \widetilde {M}^d_m(t_{k}^m)\in V_\ep^{k,m},
 \\0 & \mbox{else},
 \ecase
 \EEQSZ 
 and $\bar{\ell}^{\vareps}_{k,m}:=2^{-m}\!\int_{Z_\vareps^c} g(\xi_{k,m},z)\,\nu(dz)$.
 \del{ 
\[
{\ell}^{\vareps}_{k,m}:=\int_{t_k^m}^{t_{k+1}^m}
\int_{Z_\vareps^c} g(\xi_{k,m}^\ep,z)\,\eta(dz,ds) \quad \mbox{and}\quad 
\bar{\ell}^{\vareps}_{k,m}:=2^{-m}\!\int_{Z_\vareps^c} g(\xi_{k,m}^\ep,z)\,\nu(dz).
\]}
Finally, for each dyadic interval define the increment 
	\[
	\widetilde{\ell}^{\vareps}_{k,m}:= {\ell}^{\vareps}_{k,m}-
	\bar{\ell}^{\vareps}_{k,m}.
	\]
%
Let us recall that, since $u \in \mathbb{D}(0,T;U)$
we can observe {and extract} the jump times within each interval $I_k^m$
that belong to the set $V_{\vareps}^{k,m}$. 
This means that for each time interval $I_{k}^m$
there exists a random number of jumps associated to the set $V_{\vareps}^{k,m}$, 
denoted by $n^{\vareps}_{k,m}$, 
and, if $n^{\vareps}_{k,m} > 0$, the corresponding jump times are given by
\[
\{ \sigma^{j}_{k,m,\vareps} : j = 1, \dots, n^{\vareps}_{k,m} \}.
\]
 Observe that
$ \{ \sigma_{k,m,\ep_1}^j: j=1,\cdots, n^{\ep_1} _{k,m}\}\subset \{ \sigma_{k,m,{\ep_2}}^j: j=1,\cdots, n^{\ep_2} _{k,m}\}$, if $\ep_1>\ep_2$.
%

\noindent Next, let us define the random measure $\hat{N}_\up^{m,k,\vareps}$ 
by setting for $B\in\CB(V_\ep^{k,m})$ and $I\in \CB(I_{k,m})$ 
\DEQS
\hat{N}_\up^{m,k,\vareps}(B\times I)
	:= \begin{cases}
    \sum_{j=1}^
  {n^\ep_{k,m}} \mathds{1}_{I}(\sigma_{k,m,\ep}^j) 
	,& \text{if } n_{k,m}^{\ep}\ge 1\quad    \text{ and }  \frac {l_{k,m}^\ep}{{n_{k,m}^{\ep}}}\in B, 
 \\
		0,& \text{otherwise}.
	\end{cases}
\EEQS
The random measure on $[0,T]$ is given by
\DEQSZ\label{defrandom}
\\
\notag 
\hat{N}_\up^{m,\vareps}(B\times I)
	:= \sum_{k=0}^{2^m-1} \hat{N}_\up^{m,k,\vareps}(B\times (I\cap I_{k,m})), \quad \forall I\in\CB([0,T]) \, \mbox{and}\, 
    B\in\CB(V_\ep^{k,m}).
\EEQSZ
	Define for each \(m\in\mathbb{N}$ and $\vareps>0\) the predictable map
	\[
	\Theta^{m}_{\vareps}(s,z):=g(\Pro_m \wi{w}_m(s),z), \quad z\in Z. 
	\]
Note that, by the definition of $\Pro_m$, we have $(\Pro_m \wi{w}_m)(s) \in V$ for all $s\in[0,T]$ and is constant on the intervals $I_k^m$. 
Since $g : V \to L(Z, X)$, it follows that $\Theta^{m}_{\vareps}$ 
is an $X$-valued predictable process.
In the next step, we define for any $m\in\NN$ a random kernel 
$$
Q^{m,\ep} :[0,T]\times X\times \CZ
\ni  ( s,x, C)\mapsto  {Q^{m,\ep}(s,x,C)}\in {[0,1]}
$$
such that we have 
for any function $\phi:[0,T]\times X\times Z\to\RR$ measurable and $I\in \CB([t_k^m,t_{k+1}^m))$  $\PP$-a.s.
\begin{align}\notag
\int_I\int_{Z} & \left\{\mathds{1}_{\Theta_\ep ^m(s, z)\neq \{0^\ast\} } \phi(s,\Theta_\ep ^m(s, z), z)\right\} \nu(d z) \, ds \\
	& = \EE\lk[\int_I \int_{X}\left\{\int_{Z} \phi(s,x, z) \, Q^{m,\ep}(s, x, d z)\right\}  \hat{N}_\up^{m,\vareps} ( d x,ds)\mid \CF_{t_{k}^m}\rk]   \label{7.28}.
\end{align}
Here, it is essential  that for $s\in [t_k^m,t_{k+1}^m)$, $\Theta_\ep ^m(s, z) $ is $\CF_{t_{k}^m}$ measurable. 
We note that $\Theta_\ep ^m(s, z)\neq 0^\ast \in X$ implies that we reconstruct the Poisson random measure on $Z$ which 
is somehow minimal in the sense that it represents only non-zero jumps in $X$. 

	By Kallenberg’s disintegration theorem \cite[Corollary~3.6]{kallenberg2}
    there is a regular kernel \(Q^{m,\vareps}\) and it can be chosen to be predictable in \(s\). 
    Intuitively, \(Q^{m,\vareps}_s(x,\cdot)\) plays the role of a {conditional inverse} of 
    \(\Theta^{m}_{\vareps}(s,\cdot)\).

In the next step, we construct an approximation of a Poisson random measure on $(Z,\CZ)$ having L\'evy measure $\nu$.
First, note that the stochastic process 
$[0,T] \ni t \mapsto Q^{m,\vareps}_t$, 
where $Q^{m,\vareps}_t : X \times \mathcal{Z} \to [0,\nu_{\vareps}(Z)]$ 
is a  kernel of a finite measure, satisfies (after normalisation) the assumptions of 
Lemma~3.22 in~\cite[p.~56]{kallenberg}. 
Hence, there exists a random process, 
interpreted as the density process 
 $$ f^{m,\ep}:[0, T] \times X
 \times[0,1] \times \Omega 
 \ni (t, x, \alpha, \omega)\mapsto
 f^{m,\ep}(t, x, \alpha, \omega)\in Z
 $$
   such that  $\mbox{Leb}( \{\alpha : f^{m,\ep}(t, x, \alpha, \omega) \in C\})=Q^{m,\ep}_t( x, C)$ for every $C \in \CZ$.


Let $\mathfrak{A}^\prime := \left(\Omega^{\prime}, \mathscr{F}^{\prime}, \mathbb{P}^{\prime}\right)$ 
be a probability space on which a sequence of mutually independent and identically distributed 
random variables $\xi^\zaehlerz_{m,k}$, $m \in \mathbb{N}$, $k \in\NN$, $\zaehlerz\in\NN$,
is defined, each uniformly distributed on $[0,1]$.
Set 
\DEQSZ\label{sigmatilde}
\wi{\wi{{\Omega}}}= \wi{\Omega} \times \Omega^{\prime}, 
\wi{\wi{\mathscr{F}}}=\wi{\mathscr{F} }\otimes \mathscr{F}^{\prime}, \wi{\wi{\mathbb{P}}}=\wi{\mathbb{P}} \otimes \mathbb{P}^{\prime}
.
\EEQSZ
Finally, let $\wi{\wi{\mathfrak{A}}}=(\wi{\wi{\Omega}}, \wi{\wi{\CF}},\wi{\wi{\PP}})$ be the corresponding probability space.
Let us define the following $\NN_0$-valued random measure\footnote{Let us remind, $\hat  N_{ \up}^{m,\ep}$ is defined in \eqref{defrandom}.} 
\DEQS 
&&\hat N_{\bar \up}^{m,\ep}:~\mathscr{B}([0,T])\times \mathscr{B}(V_\ep^{k,m})\times \mathscr{B}([0,\nu_{\vareps}(Z)]) \to \NN_0\\
&& (I_1, B, I_2) \mapsto \hat N_{\bar \up}^{m,\ep}(I_1 \times B\times I_2)  := \hat N_\up^{m,\ep}(B\times I_1) \, \#\{ k: \xi^\ell_{m,k}\in I_2,\ell\le \zaehlerz\}
\EEQS 
and define $\hat N^{m,\ep}_\uq:\CZ\times \mathscr{B}([0,T])\to \NN_0$ 
 by 
\DEQSZ\label{zuvor}
\\
\notag 
\hat N^{m,\ep}_\uq( C\times I )&:=&\int_I \int_{X\times[0,1]}  \mathds{1}_{C}(f^{m,\ep}(s, x, \alpha))\hat  N^{m,\ep}_{\bar\up}(d s, d x, d\alpha) .
\EEQSZ 
The aim is to investigate the properties of  $\hat N^{m,\vareps}_{\uq}$  and to show that the {limit in law} of $\hat N^{m,\vareps}_{\uq}$ for $m\to \infty$ is a time-homogeneous Poisson random measure with L\'evy measure $\nu_\ep$.  
By direct calculation, using the fact that the processes have the same law, we can show that the random measure is indeed a Poisson random measure on
 $(Z,\CZ)$ with L\'evy measure $\nu_\ep$. 
 Here it is important that the entities like $\wi{\PP}(\{\hat N^{m,\vareps}_{\uq}=k\})$ or properties such as  being independently scattered and predictability can be characterized by identities at the level of the probability distributions.
Since these properties are distributional in nature, equality in law suffices to transfer them to the constructed random measure, which justifies its identification as a Poisson random measure.

Recall that the family of random measures 
$$\{ \hat {N}^{m,\vareps}_{\uq} : m \in \mathbb{N}\}$$
is constructed on the probability space 
{$\widetilde{\widetilde{\mathfrak{A}}}$}. 
For the construction of the random measure,  we have used the processes 
$$\{{\widetilde{\widetilde{ w}}_m} : m \in \mathbb{N}\},$$ 
which also live on {$\widetilde{\widetilde{\mathfrak{A}}}$} and are equal in distribution 
to the relabeled processes 
$\{{ w^\ast_{\zaehler _m,\infty}} : m \in \mathbb{N}\}$ 
defined on {$\mathfrak{A}^\ast$}. 
From these processes we know that each ${w^\ast_{\zaehler _m,\infty}}$ is a fixed point of the operator 
$\hat{\mathcal{V}}^{\zaehler_m}_{\mathfrak{A}^\ast, W^\ast, \eta^\ast}$ 
(see Step (IV)). 
Hence, these processes are solutions to the SPDEs given by~
\begin{align*}
\lqq{w^\ast_m(t)= x_0+ 
\int_{0}^ {t} A \Pro_m w^\ast_m(s)\, ds+ 
\int_{0}^ {t}  F(\Pro_mw^\ast_m(s) )\, ds }
&
\\
& {}
 +\int_{0}^ {t} \Sigma(\Pro_m w^\ast_m (s) )\, dW(s)
 +  {\int_{0}^ {t} \int_Z g(\Pro _mw^\ast_m(s),z)\,\tilde{\eta}(dz,ds)}.
\end{align*}
\del{Let us start with noting that $ {w}^\ast_{\zaehler _m} $ solves over $\mathfrak{A}^\ast$ the following SPDE
\begin{align*}
\lqq{w^\ast_m(t)= x_0+ 
\int_{0}^ {t} A \Pro_m w^\ast_m(s)\, ds+ 
\int_{0}^ {t}  F(w(s),\Pro_mw^\ast_m(s) )\, ds }
&
\\
& {}
 +\int_{0}^ {t} \Sigma(\Pro_m w^\ast_m (s) )\, dW(s)
 +  {\int_{0}^ {t} \int_Z g(\Pro _mw^\ast_m(s),z )\,\tilde{\hat N}^{\infty}(dz,ds)}.
\end{align*}}

\del{Also, note that, due to the construction, for all $\ep_1>\ep_2>0$, there exists some $m_0\in\NN$ such that for all $C\in\CZ$ and $I\in\mathscr{B}([0,T])$, $N^{m,\vareps_2}_{\uq}(B\times I)\ge N^{m,\vareps_2}_{\uq}(B\times I)$ for all $m\ge m_0$.
}
We mimick the construction on $\mathfrak{A}^\ast$, 
starting with the relabeled sequence $\{  {w^\ast_{\zaehler _m,\infty}} : m \in \mathbb{N} \}$, 
defining 	\[
	\Theta^{\ast,\zaehler _m}_{\vareps}(s,z):=g(\Pro_{\zaehler _m}( {w^\ast_{\zaehler _m,\infty}})(s),z), \quad z\in Z, 
	\]
 defined over $\mathfrak{A}^\ast$,
results in a family of random measures 
$\{ {N}^{\ast,\zaehler _m,\vareps}_{\up} : m \in \mathbb{N} \}$,
defined by
%
\DEQSZ\label{zuvorast}\,\,
\\
\nonumber
N^{\ast,\zaehler _m,\ep}_\up( B\times I )&:=&\int_I \int_{ {Z}}  \mathds{1}_{B}( \Theta^{\ast,\zaehler _m}_{\vareps}(s,z)) \mu(d s,  {d z}), \,B\in \CB(V_{k,m,\ep}^c) , \, I\in \CB([0,T]) .
\EEQSZ 
On the other side, we can define a random measure similarly to \eqref{defrandom}, only living on $\mathfrak{A}^\ast$.
Note, that we add a $\ast$ at all random variables and entities defined on the probability space $\mathfrak{A}^\ast$.

Let us remind, that we know that the laws of  {$w_m$} and $ {w^\ast_{\zaehler _m}}$ are identical 
Moreover, the number of jumps of $\up$ denoted by $n^{\vareps}_{k,m}$ and the corresponding set of jump times 
$\{ \sigma^{j}_{k,m,\vareps} : j = 1, \dots, n^{\vareps}_{k,m} \}$ 
have the same distribution as the number of jumps of  {$p^{\ast}$}\footnote{For the definition of $\up^\ast$ and $\uq^\ast$ see Remark \ref{pandq}.}
denoted 
 {$n^{\ast,\vareps}_{k,m}$}, and its jump times 
$\{  {\sigma^{\ast,j}_{k,\zaehler _m,\vareps}} : j = 1, \dots, n^{\ast,\vareps}_{k,m} \}$.
In addition, the distribution of ${\ell}^{\vareps}_{k,m}$, defined in~\eqref{def_ell}, is known. 
To show this, let us denote by $\nu_{\up}^{\ast,k,\vareps,m}$ the random  measure on $X$ induced by the mapping $g$, i.e.,
\[
 {\nu_{\up}^{\ast,k,\vareps,m}}(B)
:= \int_{Z_\vareps^c} 
\mathds{1}_B\big( g( {\xi_m(t_k^m)}, z) \big) \, \nu(dz),
\qquad B \in \mathscr(V_{\vareps,m,k}).
\]
Let us note that for fixed $m$ and $k=0,\ldots, 2^m-1$, this is a L\'evy measure. 
We know that the distribution of  {$\ell_{k}^{m,\ast,\vareps}$} 
is {given by \eqref{def_ell}, but all underlying objects are defined on the probability space $\mathfrak{A}^\ast$}
is given by the exponential of measures $e(\nu_{\up}^{\ast,k,\vareps,m})$, 
as defined in~\cite[Chapter~5.3, p.~63]{linde}. 
Therefore, the law of all components from which 
$\hat {N}^{m,\vareps}_{\up}$ over $\wi{\wi{\mathfrak{A}}}$ is constructed is fully determined. 
This, in turn, allows us to analyze the limit $m \to \infty$ of 
$\hat{N}^{m,\vareps}_{\up}$.

This again allows us to analyze the limit $m \to \infty$ of 
$\hat {N}^{m,\vareps}_{\up}$.
In particular,  since  {$w^\ast_{\zaehler _m}$} and  {$\tilde{\tilde{w}}_m$} has the same distribution, we know for  $B\in\mathscr{B}(V_\ep^{k,m})$, we have 
\DEQS
\lqq{ \Law\lk(\int_{V^\ep_{m,k}}z ({N}_{\up,\tiny \mbox{comp}}^{m,\vareps}-\gamma_ {\up,\tiny \mbox{comp}}^{m,\vareps})(dz \times I_k^m) {\mid \tilde{\CF}_{t_k^m}}\rk)}
\\
&=&
\Law\lk( \int_{V^\ep_{m,k}}z ( -\gamma_{\up,\tiny \mbox{comp}}^{\ast,\zaehler _m,\vareps})(dz\times I_k^m) {\mid {\CF}^\ast _{t_k^m}}\rk).
\EEQS 
Here, $\gamma_ {\up,\tiny \mbox{comp}}^{m,\vareps}$ is {\sl compensator} of $N_ {\up,\tiny \mbox{comp}}^{m,\vareps}$, defined by 
$$
\gamma_ {\up,\tiny \mbox{comp}}^{m,\vareps}(B\times I):=\EE \lk[N_ {\up,\tiny \mbox{comp}}^{m,\vareps}(A\times I)\mid \CF_{t_k^m}\rk],\quad B\in\mathscr{B}(V_\ep^{k,m}),\, I\in\mathscr{B}([0,T]).
$$
Observe,  ${N}_\up^{\ast,\zaehler _m,\vareps}$ is define in the same way as ${N}_\up^{\zaehler _m,\vareps}$ , but all objects defined over $\mathfrak{A}^\ast$.
In addition, we have 
$$
\Law\lk( \int_{V^\ep_{m,k}}z ({{N}_{\up,\tiny \mbox{comp}}^{\ast,\zaehler _m,\vareps}}-{{\gamma}_{\up,\tiny \mbox{comp}}^{\ast,\zaehler _m,\vareps}})(dz\times I_k^m) {\mid {\CF}^\ast _{t_k^m}}\rk)= \EE \lk[ {
e\lk(\nu_\up^{\ast,k,\ep,\zaehler _m}\rk)\mid {\CF}^\ast _{t_k^m}}\rk].
$$
%
 Hence, we know  
{$
\nu_\up^{\ast,\ep,\zaehler _m,k}\stackrel{d}{=}\wi{\nu}_\up^{\ep,m,k}$.}
However, we are interested in the limit for $m\to\infty$. Therefore, we have to have a closer look.
%
 In particular,  let us define for any $m\in\NN$, the discrete filtration 
    $(\CA^m_k)_{{k\in\NN}}$ with $\CA_k^m:= \CF_{t_k^m}$ for $k=0,\cdots, 2^m$.
Let $Z\in\CB(Z_\ep^c)$ and $I\in  \CB(I_{k,m})$. 
\del{Then, we have due to the construction 
\DEQSZ\label{mmmm}\notag 
\lqq{ 
\widetilde{\widetilde{\EE}} \lk[ N^{m,\ep}_\uq( C\times I )\mid \CF_{t_k}\rk] \stackrel{d}{=}\EE^\ast \lk[  N^{\ast,m,\ep}_\uq( C\times I )\mid \CF_{t_k}\rk]}
\\
&=&\EE^\ast\lk[  \int_I \int_{X\times[0,1]}  \mathds{1}_{C}(f^{\ast,m,\ep}(s, x, \alpha)) N^{\ast,\zaehler _m,\ep} _{\up}(d s, d x)\, d\alpha \mid \CF_{t_k}\rk], \quad C\in \mathscr{B}(Z) .
\EEQSZ}
We now that $\hat{N}_\up^{m_1,\ep,\ast} (B\times [0, t) ){=}\hat{N}_\up^{m_2,\ep,\ast} (B\times [0, t) ] )$ for all $B\in \mathscr{B}(V_\ep^{m,k})$, if $n_{k,m_1}^\ep, n_{k,m_2}^\ep \le 1$ for all $k=0,\dots, 2^m$.
Now we have to calculate the probability of 
$$\PP^\ast \lk( \{ n_{k,m}^{\ep,\ast}\le 1: k=0,\cdots, 2^m\}\rk)
=\lk( \PP^\ast \lk( n_{k,m}^{\ep,\ast}= 1\rk)+\PP^\ast \lk(n_{k,m}^{\ep,\ast}= 0\rk)\rk)^{2^m}
.
$$
Let 
$$
\Omega^\ast _m:=\{ \omega\in \Omega^\ast :  n_{k,m}^{\ep,\ast}\le 1: k=0,\cdots, 2^m\},\quad \Omega^\ast_\infty:=\lim_{m\to\infty} \Omega_m^\ast .
$$
Then $\Omega^\ast _{m+1}\supset \Omega^\ast _m$, hence, $$
{\PP}^\ast (\Omega^\ast _\infty)=\lim_{m\to\infty}\PP^\ast (\Omega^\ast _m). $$
Also note that we have 
\DEQS
 \PP^\ast  \lk(n_{k,m}^{\ep}= 1\rk)
&=&e^{-\nu(Z_\ep^c )\Leb( I_k^m)}\cdot \nu(Z_\ep^c)\, \Leb( I_k^m)
\EEQS 
and
\DEQS 
%
\PP\lk(n_{k,m}^{\ep,\ast}= 0\rk)
&= &e^{-\nu(Z_\ep^c )\Leb( I_k^m)} .
\EEQS 
Taking the limit for $m\to\infty$ we obtain 
\DEQS
 \lk( \PP^\ast \lk(n_{k,m}^{\ep,\ast }= 1\rk)+\PP^\ast \lk(n_{k,m}^{\ep,\ast }= 0\rk)\rk)^{2^m}
&=&\lk[ e^{-\nu_\up^\ep/{2^m}}+ e^{-\nu_\up^\ep/{2^m}} \frac {\nu_\up^\ep}{{2^m}}\rk]^{2^m}
\\
\lqq{\hspace{-3cm}=\lk( e^{-\nu_\up^\ep/{2^m}}\rk)^{2^m}\lk( 1+\frac {\nu_\up^\ep}{{2^m}}\rk)^n
\rightarrow e^{-\nu_\up^\ep}e^{\nu_\up^\ep}=1.\hspace{4cm}\phantom{mmmmmm}}&&
\EEQS 
Hence, ${\PP}^\ast (\Omega^\ast _\infty)=1$. 
The set  $\Omega^\ast_m$ can be defined in the same way on ${\wi{\wi{\mathfrak{A}}}}$
by setting 
$$
\wi{\wi{\Omega}} _m:=\{ \omega\in \wi{\wi{\Omega}} :  n_{k,m}\le 1: k=0,\cdots, 2^m\},\quad \wi{\wi{\Omega}}_\infty:=\lim_{m\to\infty} \wi{\wi{\Omega}}_m.
$$
 we have $\widetilde{\widetilde{\PP}}(\wi{\wi{\Omega}}_\infty)=1$.
That means the limit of the random measures $\hat{N}_\uq^{m,\ep}$ exists with probability one.
Let us define a random measure by 
\DEQS 
\hat{N}_\uq^{\infty,\ep}(B\times I) & := & \lim_{m\to\infty} \hat{N}_\uq^{m,\ep}(B\times I).
\EEQS 
It remains to show that $\hat{N}_\uq^{\infty,\ep}$ is indeed a Poisson random measure on $(Z_\ep,\CZ_\ep)$ with L\'evy measure $\nu_\ep$.

To show that it is a Poisson random measure, we will first show that for all $C\in \CZ_\ep$ and  time interval $I=[t,s)$, $t,s\in[0,T]$, $t<s$, we have
$$
\wi{\wi{\EE}} \lk[ \hat N^{m,\ep}_\uq( C\times I )\mid \CF_t\rk] = \nu_\ep(C)\,(s-t).
$$
%
This is done in the next steps.
By the construction of the density process we know that
$$
\Leb\lk(\lk\{\alpha: \mathds{1}_{C}(f^{m,\ep}(s, x, \alpha))\rk\}\rk)=
Q_s^{m,\ep}(x,C).
$$
That means, we can write
%
%
\DEQS 
\widetilde{\widetilde{\EE}} \lk[\hat  N^{m,\ep}_\uq( C\times I )\mid\wi{\wi{\CF}} _{t_k^m}\rk] 
&\stackrel{d}{=}&
\widetilde{\widetilde{\EE}} \lk[ \int_I\int_{X} 
\left\{\int_{Z} \mathds{1}_C( z) \, Q^{m,\ep}_s( x, d z)\right\}  \hat N^{\zaehler _m,\ep}_{\up} ( d x,ds)\mid \wi{\wi{\CF}} _{t_k^m}\rk].
\EEQS
However, $\int_{Z} \mathds{1}_C( z) \, Q^{m,\ep}_s( x, d z)=Q^{m,\ep}_s(x,C)$. This gives
\DEQS 
\widetilde{\widetilde{\EE}} \lk[\hat  N^{m,\ep}_\uq( C\times I )\mid \wi{\wi{\CF}} _{t_k^m}\rk] 
&\stackrel{d}{=}& \widetilde{\widetilde{\EE}}  \lk[ \int_I\int_{X} 
 Q^{\ast,\zaehler _m,\ep}_s( x, C )  \hat N^{\zaehler _m,\ep}_{\up} ( d x,ds)\mid \wi{\wi{\CF}} _{t_k^m}\rk].
 \EEQS
 Taking into account that all on the right hand side is $\wi{\CF}_{t}$-measurable (see Definition \eqref{sigmatilde}), we get 
 \DEQS 
\widetilde{\widetilde{\EE}} \lk[\hat  N^{m,\ep}_\uq( C\times I )\mid \wi{\wi{\CF}} _{t_k^m}\rk] 
&\stackrel{d}{=}& \widetilde{\widetilde{\EE}}  \lk[ \int_I\int_{X} 
 Q^{\ast,\zaehler _m,\ep}_s( x, C )  \hat N^{\zaehler _m,\ep}_{\up} (d x,ds)\mid {\wi{\CF}} _{t_k^m}\rk].
 \EEQS
 Now, we know that the laws of $(\hat N^{m,\ep}_\up, Q^{m,\ep})$ on $\wi{\mathfrak{A}}$ and $(\hat N_{\up^\ast}^{\ast,\zaehler _m,\ep}, Q^{\ast,\zaehler _m,\ep})$ are equal on $\mathfrak{A}^\ast$
 are equal. This gives
\DEQS 
\widetilde{\widetilde{\EE}} \lk[\hat  N^{m,\ep}_\uq( C\times I )\mid \wi{\wi{\CF}} _{t_k^m}\rk] 
&\stackrel{d}{=}& \EE^\ast \lk[ \int_I\int_{X} 
 Q^{\ast,\zaehler _m,\ep}_s( x, C )  \hat N^{\ast,\zaehler _m,\ep}_{\up^\ast} ( d x,ds)\mid {\CF}^\ast _{t_k^m}\rk].
 \EEQS
\question{sigma algebra}%
\question{is ${\CF}^\ast _t$}
Now, let us recall how the random measure 
$\hat{N}^{\ast,\zaehler _m,\vareps}_{\up^\ast}$ 
on $\mathfrak{A}^\ast$ is constructed. 
According to the law of total probability, 
the overall probability of an event can be obtained 
by summing over all conditional probabilities 
with respect to a partition of the sample space. 
In our setting, the partition is determined by the number of jumps 
occurring within a single interval $I_k^m$.
\DEQS 
\lqq{ \widetilde{\widetilde{\EE}} \lk[\hat  N^{m,\ep}_\uq( C\times I )\mid\wi{\wi{\CF}} _{t_k^m}\rk] \stackrel{d}{=}\sum_{{n}=0}^{2^m}   \PP^\ast \lk( \mu(Z_\ep\times I)=n\rk)  }
\\
&&\times 
\, \EE^\ast\lk[  \int_{I_{k}^m} \EE^\ast\lk[  Q^{\ast,\zaehler _m,\ep}_s\lk( \frac {l_{k,m}^{\ep,\ast}}n, C\rk ) 
 \mid \CF^\ast _{{t_k^m}}\rk]\, ds \mid    \mu(Z_\ep\times I)=n\rk].
%
\EEQS
Note that, if there is only one jump then we have for $I\in  \mathscr{B}(I_{k,m})$ 
$$
\nu(Z^c_\ep)\EE^\ast \lk[ \int_{I_k^m}   Q^{\ast,\zaehler _m,\ep}_s\lk( {l_{k,m}^{\ep,\ast}}, C\rk ) \mid  \mu(Z^c_\ep\times I)=1\rk]= \frac
{ \nu(C\cap Z_\ep^c)}{ \nu(Z^c_\ep)}\Leb(I)\nu(Z^c_\ep)
.  $$
That means, the first and second term of the sum are fine, only the higher order terms diverge and we can write 
\DEQS
\lqq{  \wi{\wi{\EE}}   \lk[ \hat N^{m,\ep}_\uq( C\times I )\mid \wi{\wi{{\CF}}} _t\rk]-\nu_\ep(C) \Leb(I) =}
\\
&=& \sum_{{n}=2}^{\infty } \,{ 
\PP^\ast \lk(  \eta^\ast(Z_\ep\times I)=n\rk)} 
\\
&&\cdot 
\EE^\ast\lk[  \int_{I_{k}^m}   Q^{\ast,\zaehler _m,\ep}_s\lk( \frac {l_{k,m}^{\ep,\ast} }n, C\rk ) \,ds -\nu_\ep (C)2^{-m }
 \mid {\CF}^\ast  _t \cap \{ \omega\in \Omega^\ast:  \eta^\ast (Z_\ep\times I)=n\} \rk] .
 \EEQS
 Note, that
 \DEQS
 \PP^\ast  \lk(n_{k,m}^{\ep}= \ell \rk)
&=&e^{-\nu(Z_\ep^c )\Leb( I_k^m)}\frac {(\Leb( I_k^m)\cdot \nu(Z_\ep^c))^\ell} {\ell!}   .
\EEQS 
This and the fact that $\Leb( I_k^m)=2^{-m} $, gives,
\DEQS
\lqq{  \wi{\wi{\EE}}   \lk[ \hat N^{m,\ep}_\uq( C\times I )\mid \wi{\wi{{\CF}}} _t\rk]-\nu_\ep(C) \Leb(I) =}
\\
& \le &  \sum_{{n}=2}^{\infty } \frac {( {\nu(Z_\ep^c )}\, 2^{-m} )^n}{n!}\,\exp(-\nu_\ep(C)2^{-m}) \cdot 2\,\nu_\ep(C) 2^{-m} 
\\
& \le & \exp(-\nu(Z_\ep^c )2^{-m}) (\nu(Z_\ep^c )2^{-m})^3
\sum_{{n}=0}^{2^m} \frac{ ({\nu_\ep}\,2^{-m} )^{n}}{n!}\,  2 \nu_\ep(C)2^{-m})\, 2^{-m} 
\\
& \le & \exp(-\nu_\ep2^{-m} )({\nu_\ep}\, \Leb{I_{k}^m})
\sum_{{n}=1}^{2^m} ({\nu_\ep}\, \Leb{I_{k}^m})^n/(n!)) 2\cdot 2^{-m} 
\\
& \le & \exp(-\nu_\ep2^{-m} )({\nu_\ep}\, \Leb{I_{k}^m})
\lk( 1- \exp(\nu_\ep2^{-m} )\rk)  2\cdot 2^{-m} 
\\[5mm]
& \le & \exp(-\nu_\ep2^{-m} )({\nu_\ep}\, \Leb{I_{k}^m})
\nu_\ep 2\cdot 2^{-2m}.
\EEQS
Next, let us calculate the variance of the difference, i.e., 
\DEQS
\lqq{ \wi{\wi{\EE}}   \lk[\lk(  \hat N^{m,\ep}_\uq( C\times I )-\eta(C\times I)\rk)^2 \mid \wi{\wi{{\CF}}}_t\rk]}
\\
&=& \sum_{{n}=2}^{2^m} \, \PP^\ast \lk( \eta^\ast (Z_\ep\times I)=n\rk) 
\\
&& \qquad \cdot \EE^\ast \lk[ \lk(  \int_{I_{k}^m} 
Q^{\ast,\zaehler _m,\ep}_s \lk( \frac {l_{k,m}^{\ep,\ast} }n, C\rk) \,ds -\mu(C\times I)\rk)^2 
\mid \CF_{t_k}\cap \{  \eta^\ast (Z_\ep\times I)=n\}\rk] 
\\
&\le & \exp(-\nu_\ep2^{-m} ) \sum_{{n}=2}^{2^m} \, ({\nu_\ep}\, \Leb{I_{k}^m})^n/(n!))
n^2
\\
&\le & {2} \exp(-\nu_\ep2^{-m} )({\nu_\ep}\, \Leb{I_{k}^m})^2  \sum_{{n}=0}^{2^m-2} \, (\nu_\ep\, \Leb{I_{k}^m})^n/(n!))
\\
&\le & {2} ({\nu_\ep}\, \Leb{I_{k}^m})^2 .
\EEQS
\noindent We have shown that for any $\zaehlerz\in\NN$, $\hat N_\uq^{\vareps}$  is a time homogeneous Poisson random measure with L\'evy measure $ \nu_\vareps$ over $\wi{\wi{\mathfrak{A}}}$.
It remains to show that the integration is well defined. We have to show that 
the following sequence
\DEQSZ\label{setdefine}
\lk\{\int_0^T  \int_{ Z_{\epsilon_{\zaehlerz}}^c}
 g(\wi{\wi{w}}_m) d\hat N^{\vareps}_\uq(dz,ds):\zaehlerz\in\NN\rk\}
\EEQSZ 
is a Cauchy sequence in $$\mathcal{M}^2_{\wi{\wi{\mathfrak{A}}}}(L^2(0,T;V)). $$
That means 
\DEQS
\wi{\wi{\EE}}  \lk| \int_0^T  \int_{ Z_{\epsilon_{\zaehlerz_2}}^c}
 g(\wi{\wi{w}}_m,z) d\hat N^{\epsilon_{\zaehlerz_2}}_\uq(dz,ds)-\int_0^T  \int_{ Z_{\epsilon_{\zaehlerz_1}}^c}
 g(\wi{\wi{w}}_m,z) d\hat N^{\epsilon_{\zaehlerz_1}}_\uq(dz,ds)\rk|^2.
 \EEQS 
 Let us note that we can write by linearity the difference above as follows
 \DEQS
\int_0^T  \int_{ Z_{\epsilon_{\zaehlerz_2}}^c}
 g(\wi{\wi{w}}_m,z) d\hat N_\uq^{\epsilon_{\zaehlerz_2}}(dz,ds)-\int_0^T  \int_{ Z_{\epsilon_{\zaehlerz_1}}^c}
 g(\wi{\wi{w}}_m,z) 
 d\hat N^{\epsilon_{\zaehlerz_1}}_\uq(dz,ds)
 \\
 =  \sum_{N=\zaehlerz_1}^{\zaehlerz_2} 
 \int_{ Z_{\epsilon_{N}}^c \setminus Z_{\epsilon_{N-1} }^c }
  g(\wi{\wi{w}}_m,z) 
  d\hat N^{\epsilon_{\max(\zaehlerz_1,\zaehlerz_2)}}_\uq(dz,ds).
 \EEQS 
Hence, due to the It\^o isometry and assuming that $\zaehlerz_2>\zaehlerz_1$, we obtain for the entity 
\DEQS 
\lqq{ \wi{\wi{\EE}}  \sum_{N=\zaehlerz_1}^{\zaehlerz_2} \int_{ Z_{\epsilon_{N}}^c \setminus Z_{\epsilon_{N-1}}^c}
 |g(\wi{\wi{w}}_m,z)|_V^2 \nu_{ \epsilon_{\max(\zaehlerz_1,\zaehlerz_2)}}(dz)}
 \\
&\le& \wi{\wi{\EE}} \sum_{N=\zaehlerz_1}^{\infty} \int_{ Z_{\epsilon_{N}}^c \setminus Z_{\epsilon_{N-1}}^c}  |g(\wi{\wi{w}}_m, z)|_V^2 \nu(dz) 
\\
&=& 
\wi{\wi{\EE}} \int_{Z_{\epsilon_{\min(\zaehlerz_1,\zaehlerz_2)}}}   |g(\wi{\wi{w}}_m, z)|_V^2 \nu(dz)
.
\EEQS 
Since we know that
$$
\wi{\wi{\EE}} \int_{Z}   |g(w^\ast _m)|_V^2 \nu(dz)<\infty,
$$
we can take $\min(\zaehlerz_1,\zaehlerz_2)$ large enough, such that the entity
$$\wi{\wi{\EE}} \int_{Z_{\epsilon_{\min(\zaehlerz_1,\zaehlerz_2)}}}   |g(\wi{\wi{w}}_m, z)|_V^2 \nu(dz)$$
is smaller than $\epsilon$. Since we are on a bounded time interval, 
it follows that the set in \eqref{setdefine} is a Cauchy sequence 
in $\mathcal{M}^2_{\wi{\wi{\mathfrak{A}}}}(L^2(0,T;V))$.


It remains to analyse the filtration generated by the processes 
and to ensure that the Poisson random measure possesses independent increments. 
Such properties, including independence, can be verified through the corresponding distributions.
This concludes the proof.

\medskip


Next, we verify several statements with the goal of passing to the limit. We point out that the same construction used for $w^\ast_{\zaehler _j,\infty}(\cdot)$ can be carried out on the original probability space $\mathfrak{A}$.
The resulting process will be denoted by $w_{\zaehler _j,\infty}(\cdot)$.
Due to the construction and properties of the projection, it is straightforward to verify that the laws are preserved. In particular, we have $\Law(\wi{w}_{\zaehler _j,\infty}) = \Law(w^\ast_{\zaehler _j,\infty})$. 


Let us recall the details we have shown up to now.
We have constructed 
\begin{itemize}
    \item an extension  $\wi{\wi{\mathfrak{A}}}$ over the probability spaces ${\wi{\mathfrak{A}}}$,
    \item  together with a a cylindrical Wiener process $\wi{\wi{W}}$ on $\mathcal{H}$ 
    \item and a Poisson random measure $\hat N_\uq$ on $Z$ with L\'evy measure $\nu$, both over  $\wi{\wi{\mathfrak{A}}}$,
\end{itemize} 
such that the processes $\wi{\wi{w}}_{\zaehlerz_j}$ over $\wi{\wi{\mathfrak{A}}}$
are fixed points of the operator $ \hat{\mathcal{V}}^{\zaehler _j}_{\wi{\wi{\MA}},\wi{\wi{W}},\hat N_\uq} $.
To be more precise, we have for any $j\in\NN$,
$$
\wi{\wi{\PP}}\lk( \wi{\wi{w}}_{\zaehlerz_j}= \hat{\mathcal{V}}^{\zaehler _j}_{\wi{\wi{\MA}},\wi{\wi{W}},\hat N_\uq}\rk)=1.
 $$
 In addition, we know that there exists an element  $\wi{\wi{w}}^\infty$ over $\wi{\wi{\mathfrak{A}}}$, such that 
 $\wi{ \widetilde{\PP}}$-a.s.\ the sequence of processes $\{\wi{\wi{w}}_{\zaehlerz_j}:j\in\NN\}$
 converges to $\wi{\wi{w}}^\infty$ in the topology of $\mathbb{X}$. 
In addition, we have for all $j\in\NN$, 
$$
\Law(\wi{\wi{w}}_{\zaehlerz_j})=\Law( {w} ^\ast _{\zaehlerz_j}).
$$
{\color{blue} check, we twice relabellled}
In addition, the following claim can be shown.
%
for $r\in(1,m_1)$
 \begin{claim}\label{claim222}
\begin{itemize}
\item There exists a constant $C>0$ such that $ \sup_{k\in{\mathbb{N}}}  \wi{\wi{\EE}} \left[ \| \wi{\wi{w}}_{\zaehler_j,\infty}
\|^{m}_{\mathbb{X}}\right]\le C$ and
\item For any $r\in (1,m]$ we have
$$\lim_{k\to \infty}{{\wi{\wi{\mathbb{E}}} }}\left[   \| \wi{\wi{w}}_{\zaehler_j,\infty}- \wi{\wi{w}} \|_{\mathbb{X}}^r\right] = 0. $$
\end{itemize}
\end{claim}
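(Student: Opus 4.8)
The plan is to prove the two bullets in turn, the second by reducing it to the first via uniform integrability. Concretely, I would first establish a moment bound on the processes $w^\ast_{\kappa_j,\infty}$ uniform in $j$, in fact at the exponent $m_1>m$ furnished by Assumption~(v) of Theorem~\ref{ther_main}; this bound immediately contains the first bullet (since $m_1>m$) and, combined with the a.s.\ convergence $w^\ast_{\kappa_j,\infty}\to w^\ast$ established in the preceding step, it forces the $L^r$-convergence for every $r\in(1,m]$.

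For the uniform bound: by the construction carried out just before the claim, each $w^\ast_{\kappa_j,\infty}$ is a genuine fixed point of the discretized operator, i.e.\ $\PP^\ast$-a.s.
\[
w^\ast_{\kappa_j,\infty}=\widehat{\Pro}_{\kappa_j}\,\mathcal{V}_{\MA^\ast,W^\ast,\eta^\ast}(w^\ast_{\kappa_j,\infty}),
\]
and, since the gluing procedure does not change the law, $w^\ast_{\kappa_j,\infty}$ has law $\mathscr{P}^\ast_{\kappa_j}\in\mathscr{K}_{\kappa_j,R}$ and therefore lies in $\Kcal_R(\MA^\ast)$. I would then note that the a priori estimate~(v), originally formulated on $\Afrak$, transfers to $\MA^\ast$ because the law of $\mathcal{V}_{\MA^\ast,W^\ast,\eta^\ast}(\xi)$ depends only on the law of $\xi$ and on the (unchanged) laws of the cylindrical Wiener process and of the Poisson random measure; this gives $\EE^\ast\|\mathcal{V}_{\MA^\ast,W^\ast,\eta^\ast}(w^\ast_{\kappa_j,\infty})\|_{\mathbb{X}}^{m_1}\le K$ with $K$ independent of $j$. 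Finally, $\widehat{\Pro}_{\kappa_j}$ is bounded on $\mathbb{X}=L^m(0,T;X)$ with constant independent of $j$: the averaging in \eqref{hatdefined} is an $L^m$-contraction by Jensen's inequality (Remark~\ref{projection}(a)), and the contribution of the first sub-interval $[0,t^{\kappa_j}_1)$, where $\widehat{\Pro}_{\kappa_j}\xi\equiv\xi_0^{\kappa_j}$, is bounded uniformly in $j$ by the choice of the approximating initial data, cf.\ \eqref{initconv}. Raising to the power $m_1$ and taking $\EE^\ast$ then yields $\sup_{j\in\NN}\EE^\ast\|w^\ast_{\kappa_j,\infty}\|_{\mathbb{X}}^{m_1}\le C$.

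For the $L^r$-convergence: from $w^\ast_{\kappa_j,\infty}\to w^\ast$ $\PP^\ast$-a.s.\ in $\mathbb{X}$ and Fatou's lemma one also gets $\EE^\ast\|w^\ast\|_{\mathbb{X}}^{m_1}\le C$, hence $\sup_{j\in\NN}\EE^\ast\|w^\ast_{\kappa_j,\infty}-w^\ast\|_{\mathbb{X}}^{m_1}<\infty$. For any $r\in(1,m]$ we have $r<m_1$, so the family $\{\|w^\ast_{\kappa_j,\infty}-w^\ast\|_{\mathbb{X}}^{r}\}_{j}$ is bounded in $L^{m_1/r}(\Omega^\ast)$ with exponent $m_1/r>1$, hence uniformly integrable; since it converges to $0$ $\PP^\ast$-a.s., Vitali's convergence theorem gives $\EE^\ast\|w^\ast_{\kappa_j,\infty}-w^\ast\|_{\mathbb{X}}^{r}\to 0$, which is the second bullet.

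The step I expect to be the main obstacle is the uniform moment bound, more precisely making rigorous that the glued process $w^\ast_{\kappa_j,\infty}$ really belongs to $\Kcal_R(\MA^\ast)$ — so that Assumption~(v) may be invoked for $\mathcal{V}_{\MA^\ast,W^\ast,\eta^\ast}(w^\ast_{\kappa_j,\infty})$ — and that the estimates of Theorem~\ref{ther_main}, stated on $\Afrak$, carry over to the enlarged probability space $\MA^\ast$; both points rely on $\mathcal{V}$ being defined intrinsically through the law of the solution of \eqref{spdes}. Once this is in place, the uniform boundedness of the projections and the passage to the limit through Vitali's theorem are routine.
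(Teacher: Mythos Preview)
Your proposal is correct and follows essentially the same route as the paper: use the fixed-point identity $w^\ast_{\kappa_j,\infty}=\hat{\mathcal V}^{\kappa_j}_{\MA^\ast,W^\ast,\eta^\ast}(w^\ast_{\kappa_j,\infty})$ together with membership in $\Kcal_R(\MA^\ast)$ to invoke the $m_1$-moment estimate of Assumption~(v), deduce uniform integrability of $\|w^\ast_{\kappa_j,\infty}\|_{\mathbb X}^r$ for $r\le m$, and conclude via the a.s.\ convergence and Vitali's theorem. You spell out more carefully than the paper does the two points it leaves implicit, namely that the estimate~(v) transfers from $\Afrak$ to $\MA^\ast$ by equality in law, and that the projections $\widehat{\Pro}_{\kappa_j}$ are uniformly bounded on $\mathbb X$ so that the bound on $\mathcal V$ passes to $\hat{\mathcal V}^{\kappa_j}$; your additional Fatou step to control $\|w^\ast\|_{\mathbb X}^{m_1}$ is a harmless refinement.
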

\begin{proof}[Proof of Claim \ref{claim222}:]
Clearly, since $ w^\ast_{\zaehler_j,\infty}$ is a fixed point of the operator
$\hat{\mathcal{V}}^\kl_{\MA^\ast,W^\ast,\eta^\ast}$, we know that 
$w^\ast_{\zaehler_j,\infty}\in \hat{\mathcal{V}}^\kl_{\MA^\ast,W^\ast,\eta^\ast}( \Kcal_R(\MA^\ast))$.
Since
 $\{ w^\ast_{\zaehler_j,\infty}\}_{j\in\NN}\subset \hat{\mathcal{V}}^{\zaehler_j}_{\MA^\ast,W^\ast,\eta^\ast }\lk(\Kcal ( \MA^\ast )\rk)$, we know from (v), that for any $r\le m_1$ (where $m_1>m$) there exists some constant $K>0$ (see iv) such that 
$$ 
\wi{\wi{\EE}} \lk\|\wi{\wi{w}}_{\zaehler_j,\infty} \rk\| _{\mathbb{X}}^r\le K,
\quad \forall j\in\NN ,
$$
%
%
%
Hence, we know that $\{\| w^\ast_{\zaehler_j,\infty}\|_{\X}^{r}\}$
is uniformly integrable for any $r\in [1,m]$ w.r.t. the probability measure ${\P^\ast}$.
From before, ${w}^\ast_{\zaehler_j,\infty}\to{w}^\ast$ ${\P}^\ast $-a.s., so
we get by the Vitali convergence theorem that for $j\to\infty$ 
\begin{equation}
\lim_{j\to\infty}\wi{\wi{{\Eb}}} \left\| \wi{\wi{w}} _{\zaehler_j,\infty}-\wi{\wi{w}}\right\|_{\X}^{r}=0\label{eq:strong-v}
\end{equation}
for any $r\in[1,m]$.
\end{proof}

\begin{remark}
    Here, the assumption (v) is essential. To be more precise, due to assumption (v), we have uniformly integrability, such that we get convergence for the $m$-moment.
\end{remark}

\item
In this step we show that $\wi{\wi{w}}^\infty$ over $\wi{\wi{\MA}}$ together with the Wiener process $\wi{\wi{W}}$ 
and Poisson random measure is indeed a fixed point to the operator  $ {\mathcal{V}}_{\wi{\wi{\MA}},\wi{\wi{W}},\hat N_\uq} $.
In this step, we will show that for all $\epsilon>0$ we have
$$\EE \lk| \wi{\wi{w}}^\infty- {\mathcal{V}}_{\wi{\wi{\MA}},\wi{\wi{W}},\hat N_\uq} (\wi{\wi{w}}^\infty)\rk|_{\mathbb{X}}\le \epsilon .
$$
We first decompose the difference in to the following sum 
\DEQS
\lqq{\wi{\wi{w}}^\infty-  {\mathcal{V}}_{\wi{\wi{\MA}},\wi{\wi{W}},\hat N_\uq} (\wi{\wi{w}}^\infty)
}&&
\\
&=&\underbrace{\wi{\wi{w}}^\infty -\wi{\wi{w}}_{\zaehler _j,\infty}}_{:=I}
+
\underbrace{\wi{\wi{w}}_{\zaehler _j,\infty} -\hat{\mathcal{V}}^{\zaehler _j}_{\wi{\wi{\MA}},\wi{\wi{W}},\hat N_\uq}  ( \wi{\wi{w}}_{\zaehler _j,\infty})}_{=:II}
\\
&&{}+
\underbrace{\hat{\mathcal{V}}^{\zaehler _j}_{\wi{\wi{\MA}},\wi{\wi{W}},\hat N_\uq}  ( \wi{\wi{w}}_{\zaehler _j,\infty})-\hat{\mathcal{V}}^{\zaehler _j}_{\wi{\wi{\MA}},\wi{\wi{W}},\hat N_\uq}  ( \wi{\wi{w}}^\infty )}_{=:III}
+
\underbrace{\hat{\mathcal{V}}^{\zaehler _j}_{\wi{\wi{\MA}},\wi{\wi{W}},\hat N_\uq}  ( \wi{\wi{w}}^\infty ) -\mathcal{V}_{\wi{\wi{\MA}},\wi{\wi{W}},\hat N_\uq} ( \wi{\wi{w}}^\infty }_{=:IV}.
\EEQS
Next, we will use  the triangle inequality and investigate each component separately.
In the following lines, we analyse the preceding terms $I$, $II$, $III$, and $IV$.

Fix $r\in [1,m]$.
Note, due to the claim \ref{claim222} we know
$$\wi{\wi{\EE}} \| \wi{\wi{w}}^\infty -\wi{\wi{w}}_{\kl_j}\|_{\mathbb{X}}^m\le \frac \ep 3.
$$

Next, to tackle II, we first know that we have equality in the laws of $\wi{\wi{w}}_{\zaehlerz_j}$ and $w^\ast _{\zaehlerz_j}$ for all $ j\in\NN$. Secondly, we know that
due to the well posedness (see Theorem \ref{ther_main}-(i)) and we know  by the step before that $\hat{\mathcal{V}}^{\zaehler _j}_{\MA^\ast,W^\ast,\eta^\ast}( w^\ast_{\zaehler _j,\infty})$ and $w^\ast_{\zaehler _j,\infty}$ indistinguishable sind, in particular, 
\DEQSZ\label{istwichtig}
\EE^\ast \lk\| \hat{\mathcal{V}}^{\zaehler _j}_{\MA^\ast,W^\ast,\eta^\ast}( w^\ast_{\zaehler _j,\infty})-w^\ast_{\zaehler _j,\infty}\rk\|^m_{\mathbb{X}}=0.
\EEQSZ 
To tackle III, we again use the equality in the laws of $\wi{\wi{w}}_{\zaehlerz_j}$ and $w^\ast _{\zaehlerz_j}$ for all $ j\in\NN$. Next, due to the continuity of the operator 
$\hat{\mathcal{V}}^{\zaehler _j}_{\MA^\ast,W^\ast,\eta^\ast}$  (see Theorem \ref{ther_main}-(iii)), we know that there exists  a function $\Phi$ with $\lim_{x\to 0}\phi(x)=0$, such that
$$
\EE^\ast  \lk\|\hat{ \mathcal{V}}^{\zaehler _j}_
{\MA^\ast,W^\ast,\eta^\ast}
( w^\ast_{\kl_j,\infty})-\hat{\mathcal{V}}^{\zaehler _j}_
{\MA^\ast,W^\ast,\eta^\ast}(w^\ast)\rk\|_{\mathbb{X}}^m
\le C\phi\lk\{ \lk(\EE\lk\|\tilde w^\ast_{\zaehler _j,\infty}-w^\ast\rk\|_{\mathbb{X}}^m\rk)^{\frac 1m}\rk\}
$$
Finally, since $\hat{\mathcal{V}}^{\zaehler _j}_{\MA^\ast,W^\ast,\eta^\ast}=\hat \Pro_{\zaehler _j}{\mathcal{V}}_{\MA^\ast,W^\ast,\eta^\ast}$ and the image of $\mathcal{V}_{\MA^\ast,W^\ast,\eta^\ast}$ is compact (see Theorem \ref{ther_main}-(v)), the difference
$$
\EE^\ast   \lk\|\hat{\mathcal{V}}^{\zaehler _j}_{\MA^\ast,W^\ast,\eta^\ast}(w^\ast)-\mathcal{V}_{\MA^\ast,W^\ast,\eta^\ast}(w^\ast)\rk\|_{\mathbb{X}}^m
$$
tends to zero (see Remark~\ref{projection}) and there exists some $j_0\in\NN$ such that for all $k\ge j_0 $ we have
$$
\EE^\ast   \lk\|\hat{\mathcal{V}}^{\zaehler _k}_{\MA^\ast,W^\ast,\eta^\ast}(w^\ast)-\mathcal{V}_{\MA^\ast,W^\ast,\eta^\ast}(w^\ast)\rk\|_{\mathbb{X}}^m\le \frac \epsilon 6.
$$
Finally, IV tends to zero, due to the continuity of the operator $\mathcal{V}_{\wi{\wi{\MA}},\wi{\wi{W}},\hat N_\uq}$. 
Here, we have to point out that the assumptions (i), (ii), (iii), (v), and (vi) have to be verified for any probability space where the Wiener process and the Poisson random measure are given.  

As a consequence, we have
\[
\mathcal{V}_{\wi{\wi{\MA}},\wi{\wi{W}},\hat N_\uq} (\wi{\wi{w}}^\infty)
     =\wi{\wi{w}}^\infty,
     \quad \wi{\wi{\PP}} \mbox{-a.s.}
     \]
As seen above, $\wi{\wi{w}}^\infty \in\Xcal(\wi{\wi{\Afrak}}^\ast)$,
so that by (v), $\mathcal{V}_{\wi{\wi{\MA}},\wi{\wi{W}},\hat N_\uq} (\wi{\wi{w}}^\infty) \in\D([0,T];U)$, and
therefore $\wi{\wi{w}}^\infty \in\D([0,T];U)$ ${\wi{\wi{\PP}}}$-a.s. Hence for
all $t\in[0,T]$, $\wi{\wi{\PP}} $-a.s.
\[
\mathcal{V}_{\wi{\wi{\MA}},\wi{\wi{W}},\hat N_\uq} (\wi{\wi{w}}^\infty)(t)=\wi{\wi{w}}^\infty(t)
\]
 and the proof is complete. Let us define the compensator of $ N_\uq$ by $\Gamma=\nu\times\Leb$.
 By the definition 
of  $\mathcal{V}_{\wi{\wi{\MA}},\wi{\wi{W}},\hat N_\uq} $, we see that $\wi{\wi{w}}^\infty$ solves
 \DEQSZ\label{spdes_infact}
 \begin{aligned}
\lqq{ d \wi{\wi{w}}^\infty(t) =\lk(\DeltaA \wi{\wi{w}}^\infty(t)+ F(\wi{\wi{w}}^\infty,t)\rk)\, dt 
} &
\\&{}+\Sigma( \wi{\wi{w}}^\infty(t))\,d\wi{\wi{W}}(t)+\int_ZG(\wi{\wi{w}}^\infty(t),z)\, (N_\uq-\Gamma)(dz,dt),\quad  \wi{\wi{w}}^\infty(0)= w_0,
 \end{aligned}
\EEQSZ
on $\wi{\wi{\Afrak}}$,
where $ w_0$ is a 
{${\wi{\wi{\Gcal}}}_0\otimes{{{\wi{\wi{\Fcal}}}}}_0$-}measurable version of $w_0$.
\end{step}
\end{proof}

\appendix

\section{Preliminaries on Poisson random measures and L\'evy processes}
\label{levy}
In addition, the term of Poisson random measure is sometimes defined in another way, starting with the intensity measure and defining the Poisson random measure with given intensity measure. However, we prove in the next lemma the equivalence between both definitions.

\begin{lemma}\label{eqdefprm}(see \cite{martin}) \refy{Let $S$ be a separable metric space.}
A measurable mapping $\eta:\Omega\to{M_{\bar{\Bbb N}}(\{Z_n\times\Int \})}$ is a time-homogeneous Poisson random measure with intensity  $\nu$ iff
\begin{letters}\label{prmpoints}
\item for any $\AAA \in\CZ$ with $\nu(U)<\infty$, the random variable $N (t,\AAA ) $ is
Poisson distributed with parameter $t\,\nu(\AAA )$, otherwise $\PP\lk( N(t,U)=\infty\rk)=1$;
\item for any $n$ and
disjoint sets $\AAA _1,\AAA _2,\ldots,\AAA _n\in\CZ$, and any $t\in[0,T]$, the random
variables $N (t,{\AAA _1}) $, $N (t,{\AAA _2}) $, \ldots, $N (t,{\AAA _n}) $  are  mutually independent;
\item the ${M_{\bar{\Bbb N}}(\{S_n\})}$-valued process  $(N (t,\cdot))_{t\in \Int }$ is adapted to $\BF$; 
%
\item for any
$t\in[0,T]$, $\AAA \in\CS$, $\nu(U)<\infty$, and any $r,s\ge t$, the random variables
$N (r,\AAA )-N (s,\AAA )$ are independent of $\CF_t$.
\end{letters}
\end{lemma}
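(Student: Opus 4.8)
The plan is to prove the two implications separately. The forward implication is a direct specialization of Definition~\ref{def-Prm} to cylinder sets $U\times(s,t]$, whereas the converse requires reconstructing the full Poissonian and independent-scattering structure on $\CS\otimes\CB(\RR^+)$ out of statements (a)--(d), which only concern the counting process $N(t,U)=\eta(U\times(0,t])$.

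For the forward direction I would argue as follows. Assume $\eta$ is a time-homogeneous Poisson random measure with intensity $\nu$ in the sense of Definition~\ref{def-Prm} and Remark~\ref{compensator}. If $U\in\CS$ with $\nu(U)<\infty$, then $\EE\eta(U\times(0,t])=t\nu(U)<\infty$ --- first for rational $t$ by additivity and stationarity of increments in (iii), then for all $t$ by monotonicity --- so (i) gives that $N(t,U)$ is Poisson with parameter $t\nu(U)$; if $\nu(U)=\infty$ I would exhaust $U$ by sets $U\cap S_k\uparrow U$ with $\nu(U\cap S_k)<\infty$ and use $N(t,U)\ge N(t,U\cap S_k)\to\infty$ in probability to conclude $N(t,U)=\infty$ a.s., which gives (a). Statement (b) is (ii) applied to the pairwise disjoint rectangles $U_j\times(0,t]$; statement (d) is the increment independence in (iii) together with $\CF_t\subseteq\CF_s$ when $t\le s$; and (c) follows from (iii) once one notes that $\CM_{\bar\NN}(\{S_n\})$ is generated by the evaluation maps $i_B$, so $M_{\bar\NN}(\{S_n\})$-valued adaptedness of $\omega\mapsto N(t,\cdot)$ is equivalent to adaptedness of each scalar process $(N(t,U))_t$.

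For the converse I would proceed in four steps. First, identify the intensity: (a) at $t=1$ gives $\EE\eta(U\times(0,1))=\nu(U)$, and on the algebra of finite disjoint unions of rectangles the set function $B\mapsto\EE\eta(B)$ coincides with $\nu\otimes\Leb$, hence equals it on $\CS\otimes\CB(\RR^+)$ by uniqueness of $\sigma$-finite measures determined on a generating $\pi$-system. Second, show each increment is Poissonian: writing $N(t,U)=N(s,U)+\eta(U\times(s,t])$ with the two summands independent by (d) and with Poisson marginals $t\nu(U),\,s\nu(U)$ by (a), a division of characteristic functions shows $\eta(U\times(s,t])$ is Poisson with parameter $(t-s)\nu(U)$ --- in particular its law depends only on $t-s$, which is time-homogeneity. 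Third --- the crux --- establish mutual independence for finitely many pairwise disjoint rectangles: refine their time coordinates to a common partition $0=r_0<\cdots<r_\ell$; adaptedness (c) together with increment independence (d) gives independence across the time blocks by successive conditioning, while (b) gives independence within each block across disjoint spatial bases, and the two combine to the desired joint independence. Fourth, upgrade to all Borel sets: a finite disjoint union of rectangles then has $\eta$-value equal to a sum of independent Poisson variables, hence is Poisson; two disjoint such unions admit a common refinement, so (i)--(ii) hold on the generating algebra and extend by a monotone-class argument to $\CS\otimes\CB(\RR^+)$, the infinite-mean cases being handled by the same monotone approximation as in the forward direction. Property (iii) is then immediate: adaptedness is (c), and $N(t,U)-N(s,U)=\eta(U\times(s,t])$ is stationary and independent of $\CF_s$ by the preceding together with (d).

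I expect the main obstacle to be the third step of the converse: passing from the ``one set at a time'' independence in (d) and the ``single time slice'' independence in (b) to genuine independent scattering of $\eta$ over arbitrary disjoint Borel sets. One must carefully interleave the temporal conditioning argument with the spatial independence at each fixed time, and then push the resulting finite-rectangle statement through a $\pi$--$\lambda$/monotone-class extension while keeping track of the $\{\infty\}$-valued cases. The remaining pieces --- the characteristic-function computation and the measure-uniqueness argument --- are routine.
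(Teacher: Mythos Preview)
The paper does not supply a proof of this lemma at all: it is stated with the reference ``(see \cite{martin})'' and nothing further, so there is no argument in the paper to compare against. Your sketch therefore goes well beyond what the authors provide.

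Your outline is sound and follows the standard route one finds in the cited reference and in classical texts such as Ikeda--Watanabe. The forward implication is indeed a direct unpacking of Definition~\ref{def-Prm}, and your four-step plan for the converse (identify the intensity, show Poisson increments via characteristic functions, establish independent scattering on rectangles, extend by a monotone-class argument) is the right architecture. You are also right to single out the third step as the crux: condition~(d) is phrased for a \emph{single} set $U$ at a time, so one does not immediately get that the \emph{vector} $(\eta(U_j\times(s,t]))_{j}$ is independent of $\CF_s$; combining the ``one slice in time'' independence of~(b) with the ``one set at a time'' increment independence of~(d) into joint independent scattering requires a careful interleaving (for instance, showing first that the processes $N(\cdot,U_j)$ are jointly Poisson with no common jumps, or running a characteristic-function argument through successive conditioning). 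You flag this honestly rather than glossing over it, which is appropriate for a sketch at this level; to make the argument airtight you would need to spell out exactly how~(b) at both endpoints $s$ and $t$, together with~(c) and~(d), pins down the conditional joint law of the increments given $\CF_s$.
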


\begin{definition}\label{def:levy}(see Linde \cite[Chapter 5.4]{linde})
Let $E$ be a separable Banach space and let $E^\prime$ be its
dual.  A symmetric\footnote{i.e. $\lambda(A)=\lambda(-A)$ for all
$A\in\CB(E)$,} $\sigma$-finite Borel measure $\lambda$ on $E$ is
called a {\sl L\'evy measure} if and only if
\begin{trivlist}
\item[(i)]
 $\lambda(\{0\} )=0$, and
\item[(ii)]
 the
function\footnote{As remarked in Linde \cite[Chapter
5.4]{linde} we do not need to suppose that the integral $\int_E
(\cos\langle x,a\rangle -1) \; \lambda(dx)$ is finite. However, see
Corollary 5.4.2 in ibidem, if $\lambda$ is a symmetric L\'evy
measure, then, for each $a \in E^\prime$, the integral in
question is finite. }
$$
E^\prime \ni a\mapsto  \exp \lk( \int_E (\cos\langle x,a\rangle -1)
\; \lambda(dx)\rk)
$$
is the characteristic function of a Radon measure on $E$.
\end{trivlist}
An arbitrary $\sigma$-finite Borel measure $\lambda$ on $E$ is
called a L\'evy measure provided its symmetric part
$\frac12(\lambda+\lambda^-)$, where $\lambda^-(A):=\lambda(-A)$,
$A\in\CB(E)$, is a
L\'evy measure.
The class  of all L\'evy measures on $(E,\CB(E))$ will be denoted by
$\CL (E)$.
\end{definition}
\begin{remark}(see e.g.\ \cite{linde})
If $E$ is a Banach space of type $p$, a measure $\nu\in\CM_+(E)$ is a L\'evy measure iff
$\nu(\{0\})=0$ and $\int_E|z|^p\nu(dz)<\infty$.
\end{remark}
%
Suppose $E$ is a Banach space of martingale type $p$.
Therefore, for a   time-homogeneous Poisson random measure $\eta$
on  $E$ with intensity measure $\nu\in\CL(E)$, the process
$$
L(t) :=
\int_0 ^ t \int_E z \, \tilde \eta(dz,ds),\quad t\ge 0,
$$
is an $E$--valued L\'evy process with triplet $(0,m,\hat \nu)$, where
$$\hat \nu=\nu\big|_{\{x\in E, |x|\le 1\}} \,\, \mbox{ and }\,\,
m=\int_{\{x\in E,|x|\ge 1\}} z\, \nu(dz). $$
For more details about the connection of Banach spaces of type $p$ and
stochastic integration we refer to Dettweiler \cite{dettweiler} or {Peszat and Zabczyk \cite{Peszat_Z_2007}. 

Usually, one starts with specifying the measurable space $(S,\CS)$ and the intensity measure $\nu$ on $(S,\CS)$. Given this, then there exists a
Poisson random measure {on $(S,\CS)$ having} the intensity measure $
\nu$.

\medskip

In order to define a stochastic integral with respect to the Poisson random measure,
$S$ has to be related to a topological vector space and the measure $\nu$ has either to be finite or has to be a L\'evy measure, for the definition see \cite[Chapter 5.4]{linde}). 
\begin{remark}
Let $Z$ be a separable Banach space, and $\CZ$ its Borel $\sigma$--algebra.
If the intensity measure
$
\nu:\CZ\to\RR
$
satisfies the integrability condition
$$
\sup_{\substack{a\in Z^\ast\\ |a|\le 1}}  \int_{Z} (1\wedge |\la z,a\ra |^2)\,  \nu(dz)<\infty.
$$
then $\nu$ is a \levy measure (see \cite[Proposition 5.4.1, p.\ 70]{linde}). 
\end{remark}

For some Banach spaces, one can characterize the \levy measures in a more precise way. Therefore, let us introduce the following definition. Let $\{\ep_k:k\in\NN\}$ be a sequence of independent, identically distributed  random variables with $\PP\lk( \ep_1=1\rk)=\PP\lk( \ep_1=-1\rk)=\frac12$. Then a Banach space with norm $|\cdot|$ is of $R$--type $p$,
(Rademacher type $p$), where $1\le p\le 2$, if for any sequence $\{ x_j:j\in \NN\}$ belonging to $l_p(E)$, we have (compare \cite[p.\ 40]{linde})
$$\PP\lk(\Big| \sum_{j=1}^\infty \ep_j x_j\Big|<\infty\rk)=1
.
$$
The Minkowski inequality implies that each Banach space is of $R$--type $1$.

\begin{remark}\label{ourass1}
Let $Z$ be a Polish space, $\CZ$ the Borel $\sigma$--algebra (in the sequel we call $(Z,\CZ)$ just a  Polish space). The family $\{Z_n\in\CZ\}$ satisfy $Z_n\uparrow Z$, and $\nu$ be a $\sigma$--finite measure with $\nu(Z_n)<\infty$ for any $n\in\NN$.
Fix $p\in[1,2]$. We assume that $E$ {is} a separable Banach space of $R$--type $p$, and that
$\xi:(Z,\CZ)\to (E,\CB(E))$ {is} a  measurable mapping.
In addition, we assume that the intensity measure
$
\nu:\CZ\to\RR_0^+
$
{satisfies} the integrability condition
\DEQSZ\label{heregrow}
  \int_{S} 1\wedge | \xi(z) |_E^p\,  \nu(dz)<\infty,\quad  \mbox{and} \quad  \nu(\{0\})=0.
\EEQSZ
Then, the measure $\nu_E$ induced by $\xi$ on $E$ is a \levy measure (and $\nu_E(\{0\}):=0$) (compare \cite[p.\ 75]{linde}). In addition, if $\eta$ is a Poisson random measure with intensity $\nu$
over a filtered probability space $(\Omega,\CF,\BF,\PP)$,  the process
$$
L:\INT 
\ni t \mapsto \int_0^ t \int_Z \xi(z\,)\, ( \eta-\nu\otimes \lambda)(dz,ds)
$$
is a \levy process over  $(\Omega,\CF,\BF,\PP)$.
\end{remark}

Hence, from now on we assume during the whole paper that the following convention is valid.

\begin{convention}\label{ourass}
We stipulate that $(Z,\CZ)$ is a Polish space, 
$\nu$ a $\sigma$--finite measure on $(Z,\CZ)$ and $Z_n\in\CZ$ such that $Z_n\uparrow Z$ and $\nu(Z_n)<\infty$ for every $n\in\Bbb N$.
\end{convention}

Let us consider a filtered  probability space $(\Omega,\CF,\BF,\PP)$, 
where $\BF=\{\mathcal{F}_t\}_{t\in \INT }$ denotes a filtration. 
A process $\xi:[0,T]\times \Omega\to X$ is progressively measurable, or simply, progressive, if its restriction to $\Omega\times [0,t]$ is $\CF_t\otimes\CB([0,t])$--measurable for any $t\ge 0$.
The predictable random field $\CP $ on $\Omega\times
\RR _ +$ is the $\sigma$--field generated by all continuous
$\BF$--adapted processes (see e.g.\ Kallenberg \cite[Chapter~25, p.\ 491]{kallenberg}).

A real valued stochastic process $\{x(t):t\in\INT \}$, defined on a
filtered probability space $(\Omega,\CF,\BF,\PP)$ is called
predictable, if the mapping  $x:\Omega\times\Int \to\Rb{}$ is
$\CP/\mathcal{B}(\RR)$-measurable. A random measure $\gamma$ on {$\mathcal Z\otimes\mathcal B(\Int )$}
over $(\Omega;\CF,\BF,\PP)$ is called
predictable, iff for each $U\in \CS$, the $\RR$--valued process
$\Int \ni t\mapsto \gamma( U\times(0,t])$ is predictable.

\begin{definition}\label{def-imPrm}
Assume that  $(Z,\CZ )$ is a measurable space and $\nu$ is a
non-negative $\sigma$--finite measure on $(Z,\CZ )$. 
Assume that $\eta$ is a time-homogeneous Poisson
random measure  with  intensity measure $\nu$ on $(Z,\CZ)$
over $(\Omega,\CF,\BF,\PP)$.  

The {\sl compensator} of $\eta$ is the unique predictable random measure, 
denoted by $\gamma$, on {$\mathcal Z\otimes\mathcal B(\Int )$} over {$(\Omega,\CF,\BF,\PP)$}, such that
for each fixed $T<\infty$ and {$B\in \CZ $} satisfying 
$\EE\eta( B\times(0,T])<\infty$, the $\mathbb{R}$-valued processes
$\{\tilde{N}(t,B)\}_{t\in (0,T]}$  defined by
$$
t\mapsto \tilde{N}(t,B):= \eta(  B\times (0,t] )-\gamma( B\times (0,t] ),
\quad   0< t\le T,
$$
is a martingale on $(\Omega,\CF,\BF,\PP)$.
\end{definition}
\begin{remark}
Assume that $\eta$ is a time-homogeneous Poisson random measure
with intensity $\nu$ on  $(S,\CS)$ over $(\Omega,\CF,\BF,\PP)$. It
turns out that the compensator $\gamma$ of $\eta$ is uniquely determined and moreover
$$
\gamma: \CZ \times \CB[0, T] \ni (B,I)\mapsto  \nu(B)\times
\Leb(I).
$$
Here $\lambda$ denotes the Lebesgue measure on $\RR$.
The  difference between a time-homogeneous  Poisson random measure
$\eta$  and its compensator $\gamma$, i.e.  $\tilde
\eta=\eta-\gamma$, is called a  {\sl compensated Poisson random
measure}.
\end{remark}

Let  $(S,\CS)$ be a measurable space
and
let $\eta$ be a time-homogeneous Poisson random measure on $S$ with intensity  measure $\nu$ being a positive $\sigma$--finite measure 
 over
$\mathfrak{A}$ satisfying Convention \ref{ourass}.
We will denote by $\tilde{\eta }$ the \it  compensated Poisson random measure \rm  defined by
$\tilde{\eta }: = \eta - \gamma $, where the compensator
 $\gamma : \bcal (\Int )\times \CZ  \to {\Int }$ satisfies in our case the following equality
$$
     \gamma (I \times B) = \Leb (I) \nu (B) ,
\qquad I \in \bcal ({\Int }) , \quad  B \in \CZ .
$$

Let us remind that the number of jumps of a Poisson random measure $\mu$ with Levy measure $\nu$ on $E$ is given by the formula
\DEQSZ\label{PRMformula}
	\PP(\mu(A\times I)=l) &=& e^{-\mu(A) \Leb(I)} \frac {(\nu(A)\Leb(I))^l}{l!},\quad A\in \mathscr{B}(E),\, I\in \mathscr{B}(I_k^m).
\EEQSZ

\begin{lemma}\label{poiss_uni_distr}
Let \refy{$(S,\CS)$ be a measurable space and} $\nu$ be a non--negative $\sigma$--finite  measure on $S$  satisfying Convention \ref{ourass}. 
Then the following holds
\begin{enumerate}
  \item there exists a probability space $\mathfrak{A}=(\Omega,\CF,\PP)$ and a time-homogeneous Poisson random measure $\eta:\Omega\to {M_{\bar{\Bbb N}}(\{Z_n\times
  \Int \})}$ with the intensity measure $\nu$;
  \item Denote by $\Theta_\nu$ the law of $\eta$ on ${\mathcal M_{\bar{\Bbb N}}(\{Z_n\times\Int \})}$.  If $\eta^\sharp$ is a time-homogeneous Poisson random measure defined possibly on different stochastic base denoted by $\mathfrak{A}^\sharp=(\Omega^\sharp,\CF^\sharp,\PP^\sharp)$ and $\nu$ is the intensity measure for $\eta^\sharp$ then $\Theta_\nu$ is the law of $\eta^\sharp$ on ${\mathcal M_{\bar{\Bbb N}}(\{Z_n\times\Int \})}$.
\end{enumerate}
\end{lemma}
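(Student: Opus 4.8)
The two assertions will be handled separately: (1) is the existence of a Poisson random measure with a prescribed $\sigma$--finite intensity, and (2) states that the intensity pins down the law on $(M_{\bNN}(\{S_n\times\Int\}),\CM_{\bNN}(\{S_n\times\Int\}))$. For (1) the plan is the classical thinning construction. By Convention~\ref{ourass} we have $S_n\uparrow S$ with $\nu(S_n)<\infty$; replacing $S_n$ by $S_n\setminus S_{n-1}$ we may assume the $S_n$ pairwise disjoint with union $S$. On an auxiliary probability space realise, independently over $n$, a variable $K_n\sim\mathrm{Poisson}(T\,\nu(S_n))$ together with an i.i.d.\ sequence $((X^n_i,\tau^n_i))_{i\ge 1}$ of $S_n\times\Int$--valued variables with common law $\frac1{T\nu(S_n)}(\nu|_{S_n}\otimes\lambda)$, where $\lambda$ is Lebesgue measure on $\Int$, and put $\eta_n:=\sum_{i=1}^{K_n}\delta_{(X^n_i,\tau^n_i)}$, $\eta:=\sum_n\eta_n$. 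Since the $S_n$ are disjoint, $\eta(S_m\times\Int)=\sum_{n\le m}\eta_n(S_n\times\Int)<\infty$ a.s., so $\eta$ is a well-defined $M_{\bNN}(\{S_n\times\Int\})$--valued random element, and $\CM_{\bNN}(\{S_n\times\Int\})$--measurability is immediate. Using the elementary thinning and superposition properties of Poisson laws one checks that $\eta(B)\sim\mathrm{Poisson}((\nu\otimes\lambda)(B))$ when $(\nu\otimes\lambda)(B)<\infty$ and $\eta(B)=\infty$ a.s.\ otherwise, and that $\eta$ is independently scattered; taking the filtration to be the usual augmentation of $\mathcal F^0_t:=\sigma(\eta(A\times(0,s]):A\in\CS,\, s\le t)$ makes the increments of $N(\cdot,U)$ stationary and independent of the past by construction, so $\eta$ satisfies Definition~\ref{def-Prm}, and its intensity is $\nu$ by Remark~\ref{compensator}. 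Set $\Theta_\nu:=\Law(\eta)$.

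For (2), let $\eta$ and $\eta^\sharp$ be time-homogeneous Poisson random measures, possibly on different stochastic bases, both with intensity $\nu$; the goal is $\Law(\eta)=\Law(\eta^\sharp)$ on $\CM_{\bNN}(\{S_n\times\Int\})$. Since this $\sigma$--field is generated by the evaluations $i_B$, $B\in\CS\otimes\mathcal B(\Int)$, the cylinders $\{\mu:\mu(B_1)\in E_1,\dots,\mu(B_k)\in E_k\}$ form a generating $\pi$--system, so by the $\pi$--$\lambda$ theorem it suffices to prove that for every finite family $B_1,\dots,B_k\in\CS\otimes\mathcal B(\Int)$ the laws of $(\eta(B_1),\dots,\eta(B_k))$ and of $(\eta^\sharp(B_1),\dots,\eta^\sharp(B_k))$ on $\bNN^{\,k}$ coincide.

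The computation is then short. First, $B\mapsto\EE\eta(B)$ and $B\mapsto\EE\eta^\sharp(B)$ are $[0,\infty]$--valued measures on $\CS\otimes\mathcal B(\Int)$. On the $\pi$--system of rectangles $A\times(s,t]$ they both agree with $\nu\otimes\lambda$: indeed $t\mapsto\EE\eta(A\times(0,t])$ is additive in $t$ by stationarity of the increments (property (iii) of Definition~\ref{def-Prm}) and satisfies $\EE\eta(A\times(0,1])=\nu(A)$ by Remark~\ref{compensator}, hence is linear, so $\EE\eta(A\times(s,t])=(t-s)\nu(A)$, with both sides $=\infty$ when $\nu(A)=\infty$. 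This $\pi$--system generates $\CS\otimes\mathcal B(\Int)$ and $\nu\otimes\lambda$ is $\sigma$--finite on it (via $S_n\times\Int\uparrow S\times\Int$, $\nu(S_n)<\infty$), so $\EE\eta=\EE\eta^\sharp=\nu\otimes\lambda$ identically. By property (i) of Definition~\ref{def-Prm}, $\eta(B)$ is then $\mathrm{Poisson}((\nu\otimes\lambda)(B))$ when this number is finite and $\eta(B)=\infty$ a.s.\ otherwise, with the identical description for $\eta^\sharp$; hence $\eta(B)\stackrel{d}{=}\eta^\sharp(B)$ for each single $B$. For the joint law, fix $B_1,\dots,B_k$ and let $\{C_\varepsilon:\varepsilon\in\{0,1\}^k\}$, $C_\varepsilon:=\bigcap_{i=1}^kB_i^{\varepsilon_i}$ with $B^1=B$, $B^0=(S\times\Int)\setminus B$, be the atoms of the algebra they generate; these are pairwise disjoint, so independent scattering (property (ii)) makes $(\eta(C_\varepsilon))_\varepsilon$ mutually independent, and likewise $(\eta^\sharp(C_\varepsilon))_\varepsilon$. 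Since $\eta(C_\varepsilon)\stackrel{d}{=}\eta^\sharp(C_\varepsilon)$ for each $\varepsilon$, we get $(\eta(C_\varepsilon))_\varepsilon\stackrel{d}{=}(\eta^\sharp(C_\varepsilon))_\varepsilon$; applying the fixed Borel map $\bNN^{\,2^k}\to\bNN^{\,k}$ sending $(n_\varepsilon)_\varepsilon$ to $\big(\sum_{\varepsilon:\varepsilon_i=1}n_\varepsilon\big)_{i=1}^k$ and noting $\eta(B_i)=\sum_{\varepsilon:\varepsilon_i=1}\eta(C_\varepsilon)$, we obtain $(\eta(B_i))_i\stackrel{d}{=}(\eta^\sharp(B_i))_i$. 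This yields all finite-dimensional laws, hence $\Law(\eta)=\Law(\eta^\sharp)=\Theta_\nu$.

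The routine part is the explicit construction in (1); the one point deserving care there is the $\sigma$--finite patching --- checking that $\eta$ really takes values in $M_{\bNN}(\{S_n\times\Int\})$ and that the canonical filtration delivers stationary, past-independent increments. In (2) the only delicate step is the reduction to finite-dimensional laws, i.e.\ confirming that the joint distributions of $B\mapsto\eta(B)$ determine $\Law(\eta)$ on $\CM_{\bNN}(\{S_n\times\Int\})$. Should one prefer to bypass the atom bookkeeping, an equivalent route is to compute the Laplace functional $\EE\exp(-\int f\,d\eta)=\exp\big(-\int(1-e^{-f})\,d(\nu\otimes\lambda)\big)$ for nonnegative simple $f$ supported on a set of finite intensity (from independent scattering and the Poisson Laplace transform), extend it to all nonnegative measurable $f$ by monotone convergence, and invoke that the Laplace functional determines the law of an $M_{\bNN}(\{S_n\times\Int\})$--valued random element.
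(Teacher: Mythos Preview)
Your proof is correct, and for part~(ii) it is in fact more complete than the paper's own argument, though the strategy differs.

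For part~(i) the paper simply cites Theorem~8.1 of Ikeda--Watanabe, whereas you give the explicit superposition construction over the disjoint pieces $S_n\setminus S_{n-1}$. Your approach is self-contained; the paper's is shorter. One small technicality: when $\nu(S_n\setminus S_{n-1})=0$ your normalised law $\frac{1}{T\nu(S_n)}(\nu|_{S_n}\otimes\lambda)$ is undefined, but this is harmless since then $K_n=0$ a.s.\ and $\eta_n\equiv 0$.

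For part~(ii) the routes diverge. The paper argues that equality in law on $\CM_{\bNN}(\{S_n\times\Int\})$ follows once $\int f\,d\eta$ and $\int f\,d\eta^\sharp$ agree in law for every bounded continuous $f$ (invoking Parthasarathy), then reduces to indicators of rectangles $U\times I$ with $U\in\CB(S_n)$ and verifies that both sides are Poisson with parameter $\nu(U)\lambda(I)$. Your route is the $\pi$--$\lambda$ argument on cylinder sets: you identify $\EE\eta=\nu\otimes\lambda$ on all of $\CS\otimes\CB(\Int)$, and then--crucially--you handle \emph{joint} laws by passing to the disjoint atoms $C_\varepsilon$ generated by $B_1,\dots,B_k$ and using independent scattering. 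This is more elementary (no appeal to measure-determining classes of continuous functions on a Polish measure space) and it closes a step the paper leaves implicit: the paper checks only the one-dimensional marginals $\eta(U\times I)$ and does not spell out why this suffices for the full law, whereas your atom decomposition makes the joint identification explicit. The Laplace-functional alternative you sketch at the end is essentially the analytic counterpart of the paper's test-function idea and would also work.
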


\begin{proof}
Part i.) is given by Theorem~8.1 \cite[p.\ 42]{ikeda}. It remains to show ii.).
Since $\nu$ is $\sigma$--finite, there exists a increasing family $\{Z_n:n\in\NN\}$ with $Z_{n+1} \supseteq Z_n$, $Z_n\uparrow Z$, and $\nu(Z_n)<\infty$.
To show that $\eta$ and $\eta^ \sharp$ have the same law on $\CM_{\bar \NN}(Z\times\Int )$, it is sufficient to show that for all $f:Z\times \Int \to\RR$ bounded and continuous, 
the random variable $\eta(f):= \int_{Z_n}\int_0^ T  f(z,t)\,\eta(dz,dt)$ and $\eta^ \sharp(f):= \int_{Z_n}\int_0^T f(z,s)\, \eta^ \sharp(dz,ds)$ have the same law, see \cite[Theorem~5.8, p.\ 38]{parth}.
Since $Z\times \RR^ +$ is a Polish space,  the $\sigma$ algebra generated by the family of bounded continuous functions coincides with the Borel--$\sigma$--algebra, see \cite[Proposition 1.4, p.5]{vakhania}. Therefore, it is sufficient to show for all $n\in\NN$, $U\in\CB(Z_n)$ and $I\in\CB(\Int )$, that the random variables
$\eta(U\times I)$ and $\eta^ \sharp(U\times I)$ have the same law.
Let $\Theta_\nu^\sharp$ be the law of $\eta_\sharp$ and let us assume $\nu(U),\Leb(I)<\infty$.
Let $k\in\NN_0$. Then, by the definition of the Poisson random measure and its intensity measure $\nu$ we know that
\DEQS
\Theta_\nu( \eta(U\times I])=k) &= & e^{-\Leb(I)\nu(U)}\, {\frac{(\nu(U)\,\Leb(I))^k}{ k!}}
\\
&=&\Theta_\nu^\sharp( \eta_\sharp(U\times I)=k).
\EEQS
If $\nu(U)=\infty$ or $\Leb(I)=\infty$, then $\Theta_\nu\lk( \eta(U\times I)=\infty\rk)=1=\Theta_\nu^ \sharp\lk( \eta(U\times I)=\infty\rk)$.
\end{proof}

Now, one can define the stochastic integral with respect to the Poisson random measure for   progressively measurable integrands,
introduced e.g.\  in
  \cite{Brz+Haus_2009} in $M$--type $p$ Banach spaces.

\begin{definition}\label{def-mart-intext}
Let $0<p\le 2$. A Banach space $E$ is of  martingale type $p$  iff
there exists a constant $C>0$ 
such that for all
$E$-valued  finite martingale $\{M_{n}\}_{n=0}^N$  the
following inequality holds
\DEQSZ\label{nnn} \sup_{0\le n\le N } \mathbb{E} | M_{n} |_E ^{p} \le  C\, 
\mathbb{E}  \sum_{n=0}^N  | M_{n}-M_{n-1} |_E ^{p},
\EEQSZ 
 where as  usually, we set  $M_{-1}=0$.
\end{definition}

Examples of $M$--type $p$ Banach spaces are, e.g.\ $L^q(\CO)$ spaces, where $\CO$ is a bounded domain. $L^q(\CO)$ is of $M$--type $p$ for any $p\le q$ (see e.g.\ \cite[Chapter 2, Example 2.2]{mtypep}).
If a Banach space $E$ is of $M$--type $p$ and $A$ is the generator of an analytic semigroup on $E$, then the complex interpolation spaces between $D(A)$ and $E$ are of $M$--type $p$.
Similar facts hold also for real interpolation spaces, but not in this generality, for more details we refer to Appendix A of \cite{zmtypep}.

In addition, we would like to point out in the following Proposition,  that we do not need to suppose that the filtration of the given probability space is right continuous.
In particular,  given a Poisson random measure $\eta$ over a filtered probability space $(\Omega,\PP,\CF,\BF)$, $\BF=(\CF_t)_{t\in \INT}$, with an arbitrary filtration,
 a progressively measurable  $L^2(S,\nu)$--valued process $\xi$, one can pass to the right continuous augmentation of the filtration without loosing the necessary  properties.
In fact, following remark can be easily shown.

\begin{remark} Let us assume that  $\eta$ is a Poisson random measure also for the  augmented right continuous filtration $\bar \CF_t:= \wedge _{h>0}\mathcal F^{\Bbb P}_{t+h}$. Then, we can construct the stochastic integral on $(\Omega,\mathcal F,(\mathcal F_t),\Bbb P)$, and the stochastic integral on $(\Omega,\mathcal F,(\bar\CF_t),\Bbb P)$. In particular, let
$$
I_1=\int_0^t\int_S\xi(r,x)\tilde\eta(dr,dx),
$$
be the integral defined by by the progressible approximation on $(\CF_t)_{t\ge 0}$ and
let 
$$ I_2=\int_0^t\int_S\xi(r,x)\tilde\eta(dr,dx).
$$
be the integral defined by by the progressible approximation on  $(\bar \CF_t)_{t\ge 0}$.
To be more precise, the difference between these two integral is, that in the integral on the left hand, we took for the predictable sequence of simple functions $(\xi_n)$ converging to $\xi$ the underlying filtration $(\CF_t)_{t\ge 0}$ and
in the integral on the right hand we took for  predictable  sequence of simple functions $(\xi_n)$ converging to $\xi$ the underlying filtration $(\bar \CF_t)_{t\ge 0}$. By the definition of the integral both are identical, i.e., $I_1=I_2$.
\end{remark}

\subsection{Extension of a filtered probability space}\label{extension}

To construct a Poisson random measure it is necessary to introduce additional random variables on an auxiliary probability space and then extend the original probability space by means of this auxiliary space.
In what follows, we make precise the notion of an extension of a probability space (compare with \cite{ikeda}).

\begin{definition}
	\label{Definition7.1.} We say a probability space $(\tilde{\Omega}, \tilde{\mathscr{F}}, \tilde{\mathbb{P}} )$ with a filteration $(\tilde{\mathscr{F}}_{t})_{t\ge 0}$ is an extension of a probability space $(\Omega, \mathscr{F}, \mathbb{P})$ with a  filteration $\left(\mathscr{F}_{t}\right)$,  if there exists a mapping $\pi: \tilde{\Omega} \longrightarrow \Omega$ which is $\tilde{\mathscr{F}} / \mathscr{F}$-measurable such that\\
	(i) $\tilde{\mathscr{F}}_{t} \supset \pi^{-1}\left(\mathscr{F}_{t}\right)$,\\
	(ii) $\mathbb{P}=\pi(\tilde{\mathbb{P}})(:=\tilde{\mathbb{P}} \circ \pi^{-1}) \quad$ and\\
	(iii) for every $X(\omega) \in \mathscr{L}_{\infty}(\Omega, \mathscr{F}, \mathbb{P})$
	$$
	\tilde{E}\left(\tilde{X}(\tilde{\omega}) \mid \tilde{\mathscr{F}}_{t}\right)=E\left(X \mid \mathscr{F}_{t}\right)(\pi \tilde{\omega}), \quad \tilde{P} \text {-a.s., }
	$$
	where we set $\tilde{X}(\tilde{\omega})=X(\pi \tilde{\omega})^{\prime} \quad$ for $\tilde{\omega} \in \tilde{\Omega}$.
\end{definition}
\begin{definition}
	\label{Definition 7.2.}  Let $(\Omega, \mathscr{F}, \mathbb{P})$ be a probability space with  filteration $\left(\mathscr{F}_{t}\right)_{t\ge 0}$. Let $\left(\Omega^{\prime}, \mathscr{F}^{\prime}, P^{\prime}\right)$ be another probability space and set
	$$
	\tilde{\Omega}=\Omega \times \Omega^{\prime}, \quad \tilde{\mathscr{F}}=\mathscr{F} \times \mathscr{F}^{\prime}, \quad \tilde{\mathbb{P}}=\mathbb{P} \times \mathbb{P}^{\prime}
	$$
	and
	$$
	\pi \tilde{\omega}=\omega \quad \text { for } \quad \tilde{\omega}=\left(\omega, \omega^{\prime}\right) \in \tilde{\Omega}
	$$
	If $(\tilde{\mathscr{F}}_{t})_{t\ge 0}$ is a filtration  on $(\tilde{\Omega}, \mathscr{F}, \tilde{\mathbb{P}})$ such that $\mathscr{F}_{t} \times \mathscr{F}^{\prime} \supset \tilde{\mathscr{F}}_{t} \supset \mathscr{F}_{t} \times\left\{\Omega^{\prime}, \phi\right\}$, then $(\tilde{\Omega}, \tilde{\mathscr{F}}, \tilde{\mathbb{P}})$ with $(\tilde{\mathscr{F}}_{t})_{t\ge 0}$ is called a standard extension of $(\Omega, \mathscr{F}, \mathbb{P})$ with filtration $(\mathscr{F}_t)_{t\ge 0}$.
\end{definition}

\del{ In particular, the following holds.
\begin{proposition}\label{ehpro}
Let $\Bspace $ be a Banach space $\Bspace $ of $M$--type $p$, $(S,\CS)$ a measurable space {subject to} Convention  \ref{ourass} and $\eta$ a Poisson random measure
on a filtered probability space $\mathfrak{A}=(\Omega,\PP,\CF,\BF)$ {with the intensity measure $\nu$}, with an arbitrary filtration $\BF=(\CF_t)_{t\in \INT }$.
Let  $\xi: \INT \times \Omega \to L^p(S,\nu;\Bspace ) $  be a
 progressively measurable
process with $\PP$-a.s.\
\begin{equation} \label{cond-2.01.1}
   \int_{0}^{T} \int_{S} \vert \xi (r,x) \vert_\Bspace ^{p} \, \nu (dx) dr   < \infty.
\end{equation}
Let $\bar {\mathfrak{A}}=(\Omega,\PP,\CF,\tilde \BF)$, $\tilde \BF=(\bar \CF_t)_{t\in \INT }$ be the right continuous filtration given by $\bar \CF_t:= \wedge _{h>0}\mathcal F^{\Bbb P}_{t+h}$
and let  $\bar \xi: {\mathbb{R}}_{+} \times \Omega \to L^p(S,\nu;\Bspace ) $  be
an $\bar \BF$--progressively measurable
process with $\PP$-a.s.\
\begin{equation} \label{cond-2.01.2}
   \int_{0}^{T} \int_{S} \vert \bar \xi (r,x) \vert_\Bspace ^{p} \, \nu (dx) dr   < \infty.
\end{equation}
Let us assume that $\xi$ and $\bar \xi$ have the same law on $L^ p([0,T];L^p(S,\nu;\Bspace ))$.
Let  $\bar\CI$ and $\CI$ be defined by
$$
  \overline {\CI} (t):=\int_{0}^{t} \int_{S} \bar\xi (r,x) \tilde{\eta } (dr,dx), \quad t \in \INT,
  $$
and
$$
 {\CI} (t):=\int_{0}^{t} \int_{S} \xi (r,x) \tilde{\eta } (dr,dx), \quad t \in \INT,
$$
where the stochastic integral is defined as before.
Then, the triplets $(\eta,\bar \xi,\bar \CI)$ and $(\eta,\xi,\CI)$ have the same distribution on $\CM(\{S_n\times [0,T]\})\times L^ p([0,T];L^p(S,\nu;\Bspace ))\times \Bspace $.
\end{proposition}

\begin{proof}
In fact, this is given by Theorem~7.23 of \cite{kallenberg}.
To be more precise, let
$\xi$ be given with representation \eqref{kkk}, then the stochastic integral is defined by
 the martingale $(M_n)_{n=1}^k$, where
  $$
 M_n=\sum_{j=1}^n  \int_S  \xi_j (x) \,\tilde \eta \lk((t_{j-1}, t_{j}] ,dx \rk)
 $$
Since for each $j=1,\ldots, k$, we have $\PP$-a.s.\ for
the conditional expectation
$$
\lim_{n\to \infty } \EE\lk[   \,\tilde \eta \lk((t_{j-1}, t_{j}] \times U  \rk)\mid \CF_{t_{j-1}+\frac 1n }\rk]
=\EE\lk[  \,\tilde \eta \lk((t_{j-1}, t_{j}]\times U  \rk)\mid \CF_{t_{j-1}}^+\rk]
$$
and on the other hand we have
$$
\lim_{n\to \infty } \EE\lk[   \,\tilde \eta \lk((t_{j-1}, t_{j}] \times U  \rk)\mid \CF_{t_{j-1}+\frac 1n }\rk]=
\lim_{n\to \infty } \eta\lk((t_{j-1}, t_{j-1}+\frac 1n ] \times U   \rk)-\nu(U)\, \frac 1n.
$$
Since
$$
 \eta\lk((t_{j-1}, t_{j-1}+\frac 1n ] \times U   \rk)  
$$
is a Poisson distributed random variable with parameter $\nu(U)\, \frac 1n$, it follows that $\PP$-a.s.\
$\eta\lk((t_{j-1}, t_{j-1}+\frac 1n ] \times U   \rk)\longrightarrow 0$ as $n\to\infty$.
The assertion follows from the definition of the integral.
\end{proof}}

\bigskip

\section{The shifted projection on the Haar basis}
\label{haarproj}
\newcommand{\pro}{P}
Let $\phi:[0,1]\to\RR$ be the Haar wavelet given by 
$$
\phi(x):=\bcase 1 &\mbox{ if } 0\le x<\frac{1}{2},
\\-1 &\mbox{ if } <\frac{1}{2}\le x <1,
\ecase
$$
$\phi_{n,j}(x):=2^n \phi(2^n x-j)$, $x\in[0,1]$, the corresponding multiresolution analysis. Let $X$ be a Banach space. For $f\in L^m(0,T;X)$, $\pro_n f=\sum_{j=0}^{2^n-1}\la f,\phi_{n,j}\ra \, \phi_{n,j}$ its orthogonal projection on
the Haar basis given by  $\{\phi_{n,j}:j=0,\ldots, 2^n-1\}$.

In a second step, we shift the time intervals. In particular, we define for all
$\kappa\in \NN$ and 
for $f\in L^m(0,T;X)$    the following shifted  function  $\shiftpro _\zaehler f$ of $f$
$$
\shiftpro _\zaehler f(s)=f(s-\frac{T}{2^n}),\ \forall s\in [0,T]
$$ 
Let
\DEQSZ\label{projectiondef}
\Pro_\zaehler := \shiftpro _\zaehler\, \pro_\zaehler.
\EEQSZ

\begin{remark}
It is straightforward to verify that for any $f\in L^m(0,T;X)$, where $X$  is a Banach space, $\Pro_\zaehler f$ can be also written as follows
\begin{equation*}
\Pro_\zaehler f(s)=
	\frac{2^n}{T}\sum_{k=0}^{2^n-1}1_{[t_k,t_{k+1}]}(s)\int_{t_k}^{t_{k+1}}f(r)\, dr,
\end{equation*}
where $t^\zaehler_k=T\frac{k}{2^n}$. 
\end{remark}

We have the following properties 
\begin{lemma}
For any $m\geq 1$ and  $n\in \mathbb{N}$, the projection  is well defined from  $ L^m(0,T;X)$ to $L^m(0,T;X)$, is linear and satisfies the following inequality. 
\begin{equation}
\| \pro _\zaehler f \|_{L^m(0,T;X)}\leq \|f\|_{L^m(0,T;X)}.\label{1.1}
\end{equation}
Moreover, if $f\in\mathbb{ W}^{\alpha}_m(0,T;X)$ with $0<\alpha<\frac{1}{m}$  then the following inequality holds
\begin{equation}
\| \pro _\zaehler f-f\|_{L^m(0,T;X)}\leq\frac{T^{\alpha}}{2^{\alpha n}}\|f\|_{ \mathbb{W}^{\alpha}_m(0,T;X)}.\label{1.2}
\end{equation}
\end{lemma}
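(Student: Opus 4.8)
The plan is to reduce both estimates to the explicit description of $\pro_n$ as the dyadic averaging operator and then apply Jensen's inequality in the Banach space $X$. First I would record that, by the identity in the Remark preceding the lemma — equivalently, since the range of $\pro_n$ is the space of $X$-valued functions that are constant on each interval $I_k:=[t^n_k,t^n_{k+1})$, $t^n_k=T\,k\,2^{-n}$, $k=0,\dots,2^n-1$, so that $\pro_n$ coincides with the dyadic averaging operator (the conditional expectation with respect to the $\sigma$-algebra generated by $\{I_k\}_k$) — one has
\[
(\pro_n f)(s)=\frac{1}{|I_k|}\int_{I_k}f(r)\,dr,\qquad s\in I_k,\qquad |I_k|=T2^{-n},
\]
for every $f\in L^m(0,T;X)$. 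Since $(0,T)$ is bounded we have $L^m(0,T;X)\hookrightarrow L^1(0,T;X)$, so each Bochner integral $\int_{I_k}f$ is well defined, $\pro_n f$ is a simple (hence strongly measurable) $X$-valued function, and $\pro_n$ is clearly linear; thus the real content of the lemma is the two norm bounds, the first of which in particular gives that $\pro_n$ maps $L^m(0,T;X)$ into itself.

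For \eqref{1.1}: on a fixed $I_k$, Jensen's inequality for the convex function $x\mapsto|x|_X^m$ against the normalized Lebesgue measure $|I_k|^{-1}\,dr$ on $I_k$ gives $\big|\,|I_k|^{-1}\int_{I_k}f\,\big|_X^m\le|I_k|^{-1}\int_{I_k}|f(r)|_X^m\,dr$; multiplying by $|I_k|$ and using that $\pro_n f$ is constant on $I_k$ yields $\int_{I_k}|(\pro_n f)(s)|_X^m\,ds\le\int_{I_k}|f(r)|_X^m\,dr$, and summing over $k=0,\dots,2^n-1$ gives $\|\pro_n f\|_{L^m(0,T;X)}^m\le\|f\|_{L^m(0,T;X)}^m$. (For $m=1$ the triangle inequality for the Bochner integral already suffices; more conceptually this is just the $L^m$-contractivity of conditional expectation.)

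For \eqref{1.2}: writing $(\pro_n f)(s)-f(s)=|I_k|^{-1}\int_{I_k}\big(f(r)-f(s)\big)\,dr$ for $s\in I_k$ and applying Jensen once more, $|(\pro_n f)(s)-f(s)|_X^m\le|I_k|^{-1}\int_{I_k}|f(r)-f(s)|_X^m\,dr$; integrating in $s$ over $I_k$,
\[
\int_{I_k}|(\pro_n f)(s)-f(s)|_X^m\,ds\le\frac{1}{|I_k|}\int_{I_k}\!\int_{I_k}|f(r)-f(s)|_X^m\,dr\,ds.
\]
Since $r,s\in I_k$ forces $|r-s|\le|I_k|=T2^{-n}$, one has $|f(r)-f(s)|_X^m\le(T2^{-n})^{1+\alpha m}\,|f(r)-f(s)|_X^m\,|r-s|^{-(1+\alpha m)}$, so the prefactor collapses to $|I_k|^{-1}(T2^{-n})^{1+\alpha m}=(T2^{-n})^{\alpha m}$, uniformly in $k$; summing over $k$ and enlarging the disjoint union $\bigcup_k(I_k\times I_k)$ to $(0,T)^2$ (the integrand being nonnegative) gives
\[
\|\pro_n f-f\|_{L^m(0,T;X)}^m\le\Big(\tfrac{T}{2^{n}}\Big)^{\alpha m}\int_0^T\!\!\int_0^T\frac{|f(t)-f(s)|_X^m}{|t-s|^{1+\alpha m}}\,ds\,dt\le\Big(\tfrac{T}{2^{n}}\Big)^{\alpha m}\|f\|_{\mathbb{W}^\alpha_m(0,T;X)}^m,
\]
the last step being the definition of the $\mathbb{W}^\alpha_m$-norm from Appendix~\ref{dbouley-space}, whose seminorm part dominates (indeed, for the Gagliardo--Slobodeckij normalization, equals) the displayed double integral; taking $m$-th roots yields \eqref{1.2}. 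The hypothesis $0<\alpha<1/m$ is exactly the range in which that seminorm defines a nontrivial space and enters only through the definition of $\mathbb{W}^\alpha_m$.

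The main obstacle is slight: once the averaging identity is in hand, both inequalities are a single application of Jensen's inequality (they express the $L^m$-contractivity of conditional expectation and a quantitative $L^m$-rate for $\pro_n f\to f$). The only care-points are the vector-valued Jensen step — legitimate because $X$ is a Banach space and $f$ is Bochner integrable over the finite interval $I_k$ — and making the constant in \eqref{1.2} match the precise normalization of $\|\cdot\|_{\mathbb{W}^\alpha_m}$ adopted in Appendix~\ref{dbouley-space}; with the Gagliardo--Slobodeckij normalization it comes out exactly $T^\alpha 2^{-\alpha n}$ as claimed.
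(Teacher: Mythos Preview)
Your proof is correct and follows essentially the same route as the paper's: both start from the dyadic averaging formula for $\pro_\kappa f$, bound the norm of the average by the average of the norms, and for \eqref{1.2} insert the Gagliardo weight $|r-s|^{-(1+\alpha m)}$ using $|r-s|\le T2^{-\kappa}$ on $I_k\times I_k$ before summing over $k$. The only cosmetic difference is that the paper phrases the key step as H\"older's inequality (with exponents $m$ and $m/(m-1)$) whereas you invoke Jensen's inequality for $x\mapsto|x|_X^m$ against normalized Lebesgue measure on $I_k$; for this convex function the two are equivalent, and your conditional-expectation viewpoint is arguably cleaner.
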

\begin{proof}
The linearity is clear. We will firstly focus on the proof of  inequality (\ref{1.1}). Let us fix $m\geq 1$, $n\in \mathbb{N}$ and $f\in L^m(0,T;X)$. 
By the
definition of the projection we get
\begin{align}
\lqq{
\| \pro_\zaehler f\|^m_{L^m(0,T;X)}=\sum_{k=0}^{2^\zaehler-1}\int _{t^\zaehler _k}^{t^\zaehler _{k+1}}|\pro_\zaehler f(s)|^m_X ds}
&
\notag\\
&=\left(\frac{2^\zaehler}{T}\right)^m\sum_{k=0}^{2^\zaehler-1}\int_{t^\zaehler _k}^{t^\zaehler _{k+1}}\left|\int _{t^\zaehler _k}^{t^\zaehler _{k+1}}	f(r)dr\right|^m_X ds\notag
\end{align}
By the H\"older inequality we derive that 
\begin{align}
&\leq\left(\frac{2^\zaehler}{T}\right)^m\left(\frac{T}{2^{n}}\right)^{m-1}\sum_{k=0}^{2^\zaehler-1}\int_{t^\zaehler _k}^{t^\zaehler _{k+1}}\int_{t^\zaehler _k}^{t^\zaehler _{k+1}}	\left|f(r)\right|^m_Xdr ds\notag\\
&=\sum_{k=0}^{2^\zaehler-1}\int_{t^\zaehler _k}^{t^\zaehler _{k+1}}	\left|f(r)\right|^m_Xdr=\int_{0}^{T}	\left|f(s)\right|^m_Xds,\notag
\end{align}
and the inequality (\ref{1.1}) follows.
Now we show the inequality (\ref{1.2}). For this aim, let us fix $0<\alpha<\frac{1}{m}$ and $f\in \mathbb{W}^{\alpha}_m(0,T;X)$. Then we have
\begin{align}
\lqq{
\| \pro_\zaehler f-f\|^m_{L^m(0,T;X)}=\sum_{k=0}^{2^\zaehler-1}\int_{t^\zaehler _k}^{t^\zaehler _{k+1}}|	\pro_\zaehler f(s)-f(s)|^m_X ds} &
\notag
\\
&=\sum_{k=0}^{2^\zaehler-1}\int_{t^\zaehler _k}^{t^\zaehler _{k+1}}\left|\frac{2^\zaehler}{T}\int_{t^\zaehler _k}^{t^\zaehler _{k+1}}	f(r)dr-f(s)\right|^m_X ds\notag
\\
&
=\sum_{k=0}^{2^\zaehler-1}\int_{t^\zaehler _k}^{t^\zaehler _{k+1}}\left|\frac{2^\zaehler}{T}\int_{t^\zaehler _k}^{t^\zaehler _{k+1}}	f(r)dr-\frac{2^\zaehler}{T}\int_{t^\zaehler _k}^{t^\zaehler _{k+1}}f(s)dr\right|^m_X ds
\notag
\\
&=\left(\frac{2^\zaehler}{T}\right)^m\sum_{k=0}^{2^\zaehler-1}\int_{t^\zaehler _k}^{t^\zaehler _{k+1}}\left|\int_{t^\zaehler _k}^{t^\zaehler _{k+1}}	\left(f(r)-f(s)\right)dr\right|^m_X ds
\notag
\\
&=\left(\frac{2^\zaehler}{T}\right)^m\sum_{k=0}^{2^\zaehler-1}\int_{t^\zaehler _k}^{t^\zaehler _{k+1}}\left|\int_{t^\zaehler _{k}}^{t^\zaehler _{k+1}}	\left(f(r)-f(s)\right)dr\right|^m_X ds
\notag
\\
&=\left(\frac{2^\zaehler}{T}\right)^m\sum_{k=0}^{2^\zaehler-1}\int_{t^\zaehler _k}^{t^\zaehler _{k+1}}\left(\int_{t^\zaehler _{k}}^{t^\zaehler _{k+1}}	\frac{|f(r)-f(s)|_X}{|r-s|^\frac{1+\alpha m}{m}}\times |r-s|^\frac{1+\alpha m}{m}dr\right)^m ds
\notag
\\
&\leq\left(\frac{2^\zaehler}{T}\right)^m\sum_{k=0}^{2^\zaehler-1}\int_{t^\zaehler _k}^{t^\zaehler _{k+1}}\int_{t^\zaehler _{k}}^{t^\zaehler _{k+1}}	\frac{|f(r)-f(s)|_X^m}{|r-s|^{1+\alpha m}} dr \left(\int_{t^\zaehler _{k}}^{t^\zaehler _{k+1}} |r-s|^\frac{1+\alpha m}{m-1}dr\right)^{m-1} ds.
\notag
\end{align}
Let us recall that we have used a H\"older inequality to obtain the previous inequality. Now, let us remark that for $k=0,1,2,3...,2^\zaehler-1$, the following identity holds
\begin{align*}
\lqq{ \left(\int_{t^\zaehler _{k}}^{t^\zaehler _{k+1}} |r-s|^\frac{1+\alpha m}{m-1}dr\right)^{m-1} \leq \left(\frac{T}{2^\zaehler}\right)^{1+\alpha m}\left(\int_{t^\zaehler _{k}}^{t^\zaehler _{k+1}} dr\right)^{m-1}}
&
 \\
 &=\left(\frac{T}{2^\zaehler}\right)^{1+\alpha m} \left(\frac{T}{2^\zaehler}\right)^{m-1}=\left(\frac{T}{2^\zaehler}\right)^{m+\alpha m}.
\end{align*}
This implies that 
\begin{align}
\lqq{ \| \pro_\zaehler f-f\|^m_{L^m(0,T;X)}
}
\\
&
\leq\left(\frac{2^\zaehler}{T}\right)^m\left(\frac{T}{2^\zaehler}\right)^{m+\alpha m}\sum_{k=0}^{2^\zaehler-1}\int_{t^\zaehler _k}^{t^\zaehler _{k+1}}\int_{t^\zaehler _{k}}^{t^\zaehler _{k+1}}	\frac{|f(r)-f(s)|_X^m}{|r-s|^{1+\alpha m}} \, dr\,ds
\notag
\\
&=\left(\frac{T}{2^\zaehler}\right)^{\alpha m}\sum_{k=0}^{2^\zaehler-1}\int_{t^\zaehler _k}^{t^\zaehler _{k+1}}\int_{t^\zaehler _{k}}^{t^\zaehler _{k+1}}	\frac{|f(r)-f(s)|_X^m}{|r-s|^{1+\alpha m}} drds\notag\\
&\leq \left(\frac{T}{2^\zaehler}\right)^{\alpha m}\|f\|^m_{ \mathbb{W}_m^{\alpha}(0,T;X)},\notag
\end{align}
and the  inequality (\ref{1.2}) follows. 
\end{proof}
Now, for the shift operator,    we can prove the following properties.
\begin{lemma}
	For any $m\geq 1$,   $n\in \mathbb{N}$ and $f\in  L^m(0,T;X)$, $\shiftpro _\zaehler f$  satisfies the following inequalities. 
\begin{equation}
	\| \shiftpro _\zaehler f\|^m_{L^m(0,T;X)}\leq \|f\|^m_{L^m(0,T;X)}+\frac{T}{2^\zaehler}|f(0)|^m_X,\label{1.1*}
\end{equation}
\begin{equation}
	\| \shiftpro _\zaehler f-f\|^m_{L^m(0,T;X)}\leq \int_0^\frac{T}{2^\zaehler}|f(0)-f(s)|^m_Xds+2^m\|f\|^m_{ L^{m}(0,T;X)}.\label{1.2*}
\end{equation}
\end{lemma}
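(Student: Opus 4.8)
The plan is to use the explicit piecewise description of the shifted function rather than the formal formula. Consistently with the definition employed in the main proof (see the formula for $\widehat{\Pro}_\zaehler\xi$), the function $\shiftpro_\zaehler f$ is understood as $f$ extended to the left of $0$ by the constant value $f(0)$ and then translated by $T2^{-\zaehler}$; that is,
\[
\shiftpro_\zaehler f(s) = f(0) \ \text{ for } s\in[0,T2^{-\zaehler}), \qquad \shiftpro_\zaehler f(s) = f\big(s - T2^{-\zaehler}\big) \ \text{ for } s\in[T2^{-\zaehler},T].
\]
With this description both inequalities reduce to splitting the $L^m$-integral at the grid point $t^\zaehler_1 = T2^{-\zaehler}$ and performing the change of variables $u = s - T2^{-\zaehler}$ on the second piece.

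For \eqref{1.1*} I would simply compute
\[
\| \shiftpro _\zaehler f\|^m_{L^m(0,T;X)} = \int_0^{T2^{-\zaehler}} |f(0)|^m_X\,ds + \int_{T2^{-\zaehler}}^{T} \big|f(s - T2^{-\zaehler})\big|^m_X\,ds = \frac{T}{2^\zaehler}|f(0)|^m_X + \int_0^{T-T2^{-\zaehler}} |f(u)|^m_X\,du,
\]
and then bound the last integral by $\|f\|^m_{L^m(0,T;X)}$, since $[0,T-T2^{-\zaehler}]\subset[0,T]$. This yields \eqref{1.1*} at once (with equality in the first term).

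For \eqref{1.2*} I would split in the same way,
\[
\| \shiftpro _\zaehler f-f\|^m_{L^m(0,T;X)} = \int_0^{T2^{-\zaehler}} |f(0)-f(s)|^m_X\,ds + \int_{T2^{-\zaehler}}^{T} \big|f(s - T2^{-\zaehler}) - f(s)\big|^m_X\,ds.
\]
The first term is exactly the first term on the right-hand side of \eqref{1.2*}. For the second term I would invoke the elementary convexity estimate $|a-b|^m \le 2^{m-1}(|a|^m+|b|^m)$ followed by the change of variables, obtaining
\[
\int_{T2^{-\zaehler}}^{T} \big|f(s - T2^{-\zaehler}) - f(s)\big|^m_X\,ds \le 2^{m-1}\Big( \int_0^{T-T2^{-\zaehler}} |f(u)|^m_X\,du + \int_{T2^{-\zaehler}}^{T} |f(s)|^m_X\,ds \Big) \le 2^{m}\|f\|^m_{L^m(0,T;X)},
\]
which gives \eqref{1.2*}.

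There is no genuine obstacle here; the proof is essentially bookkeeping. The only two points that require a little care are: first, fixing the convention for $f$ at negative arguments so that $\shiftpro_\zaehler f$ is well defined on all of $[0,T]$ and matches the piecewise formula used in the main proof; and second, tracking the multiplicative constant in the second estimate, where the natural bound from convexity is $2^{m-1}$ and one simply enlarges it to the stated $2^m$. Everything else is a change of variables together with monotonicity of the integral under restriction to a subinterval.
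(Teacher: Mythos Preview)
Your proof is correct and follows essentially the same route as the paper: split the integral at $T2^{-\zaehler}$, use the piecewise description of $\shiftpro_\zaehler f$, perform the change of variables $u=s-T2^{-\zaehler}$ on the second piece, and for \eqref{1.2*} apply the convexity bound $|a-b|^m\le 2^{m-1}(|a|^m+|b|^m)$ before enlarging each subinterval to $[0,T]$. There is no substantive difference between your argument and the paper's.
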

\begin{proof}
We note that 
\begin{align*}
\lqq{
\|\shiftpro _\zaehler f\|_{L^m(0,T;X)}^m=\int_0^\frac{T}{2^\zaehler}|\shiftpro _\zaehler f(s)|_X^mds+\int_\frac{T}{2^\zaehler}^T|\shiftpro _\zaehler f(s)|_X^mds
} &
\\
&=\int_0^\frac{T}{2^\zaehler}|f(0)|_X^mds+\int_\frac{T}{2^\zaehler}^T|f(s-\frac{T}{2^\zaehler})|_X^mds\\
&=\frac{T}{2^\zaehler}|f(0)|_X^m+\int^{T-\frac{T}{2^\zaehler}}_0|f(s)|_X^mds\\
&=\frac{T}{2^\zaehler}|f(0)|_X^m+\int^{T}_0|f(s)|_X^mds,
\end{align*}
and the inequality (\ref{1.1*}) follows.

For the proof of the inequality (\ref{1.2*}), we use the inequality $(a+b)^m\leq 2^{m-1}(a^m+b^m)$, $a$, $b> 0$ to derive that
\begin{align*}
\lqq{	\|\shiftpro _\zaehler f-f\|_{L^m(0,T;X)}^m}
&
\\ &=\int_0^\frac{T}{2^\zaehler}|\shiftpro _\zaehler f(s)-f(s)|_X^mds+\int_\frac{T}{2^\zaehler}^T|\shiftpro _\zaehler f(s)-f(s)|_X^mds\\
	&= \int_0^\frac{T}{2^\zaehler}|f(0)-f(s)|_X^mds+\int_\frac{T}{2^\zaehler}^T\left|f(s-\frac{T}{2^\zaehler})-f(s)\right|_X^mds\\
	&\leq  \int_0^\frac{T}{2^\zaehler}|f(0)-f(s)|_X^mds+2^{m-1}\int_\frac{T}{2^\zaehler}^T\left|f(s-\frac{T}{2^\zaehler})\right|_X^mds+2^{m-1}\int_\frac{T}{2^\zaehler}^T\left|f(s)\right|_X^mds\\
	&= \int_0^\frac{T}{2^\zaehler}|f(0)-f(s)|_X^mds+2^{m-1}\int_\frac{T}{2^\zaehler}^T\left|f(s)\right|_X^mds+2^{m-1}\int_\frac{T}{2^\zaehler}^T\left|f(s)\right|_X^mds\\
	&= \int_0^\frac{T}{2^\zaehler}|f(0)-f(s)|_X^mds+2^{m}\int_\frac{T}{2^\zaehler}^T\left|f(s)\right|_X^mds\\
	&= \int_0^\frac{T}{2^\zaehler}|f(0)-f(s)|_X^mds+2^{m}\int_0^T\left|f(s)\right|_X^mds,
\end{align*}
and  we then derive the inequality (\ref{1.2*}).
\end{proof}
Next, for $f\in L^m(0,T;X)$    we define the following shifted Haar projection of $f$
\begin{equation*}
	\shiftpro_\zaehler f(s)=\bcase 
	f(0)\qquad \text{ if } \ s\in [0,t_1)\vspace{0.2cm}\\
	\frac{2^\zaehler}{T}\int_{t_k}^{t_{k+1}}f(r-\frac{T}{2^{n}})dr\qquad \text{ if } s\in [t_k,t_{k+1}), \ k=1,2,3,...,2^\zaehler-1.
	\ecase
\end{equation*}
We start by remark that this projection can be rewritten as follows:
\begin{equation*}
	\Pro_\zaehler(f)(s)=\bcase 
	f(0)\qquad \text{ if } \ s\in [0,t_1)\vspace{0.2cm}\\
	\frac{2^\zaehler}{T}\int_{t_{k-1}}^{t_{k}}f(r)dr\qquad \text{ if } s\in [t_k,t_{k+1}), \ k=1,2,3,...,2^\zaehler-1.
	\ecase
\end{equation*}
We have the following properties 
\begin{lemma}\label{convergenceproj}
	For any $m\geq 1$ and  $n\in \mathbb{N}$, the projection  $$\Pro _\zaehler: L^m(0,T;X)\longrightarrow L^m(0,T;X)$$ is well defined, is linear and satisfies the following inequality. 
	\begin{equation}
		\| \widehat \Pro_\zaehler(f)\|^m_{L^m(0,T;X)}\leq \|f\|^m_{L^m(0,T;X)}+\frac{T}{2^\zaehler}|f(0)|^m_X\qquad \forall f\in L^m(0,T;X).\label{1.4}
	\end{equation}
	Moreover, if $f\in W^{\alpha,m}(0,T;X)$ with $0<\alpha<\frac{1}{m}$  then the following inequality holds
	\begin{equation}
		\| \widehat\Pro_\zaehler(f)-f\|^m_{L^m(0,T;X)}\leq \int_0^\frac{T}{2^\zaehler}|f(0)-f(s)|^m_Xds+\left(\frac{T}{2^{ \kappa}}\right)^{\alpha m}\|f\|^m_{ W^{\alpha}_m(0,T;X)}.\label{1.5}
	\end{equation}
\end{lemma}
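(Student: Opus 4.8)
The plan is to prove \eqref{1.4} and \eqref{1.5} by a direct computation on the dyadic grid $\{t_k=Tk/2^\zaehler\}$, along the lines of the two preceding lemmas (on $\pro_\zaehler$ and on the shift $\shiftpro_\zaehler$) and combining their ingredients. Linearity of $\widehat{\Pro}_\zaehler$ is immediate from its explicit piecewise formula, and well-definedness on $L^m(0,T;X)$ is clear once $f(0)$ is read as the prescribed initial value (as in \eqref{hatdefined}), the averages over $[t_{k-1},t_k]$ being finite for $f\in L^m$.

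For \eqref{1.4}, I would split
\[
\|\widehat{\Pro}_\zaehler f\|^m_{L^m(0,T;X)}=\int_0^{t_1}|f(0)|^m_X\,ds+\sum_{k=1}^{2^\zaehler-1}\int_{t_k}^{t_{k+1}}\Big|\tfrac{2^\zaehler}{T}\int_{t_{k-1}}^{t_k}f(r)\,dr\Big|^m_X\,ds .
\]
The first summand equals $\tfrac{T}{2^\zaehler}|f(0)|^m_X$. For each $k$, Jensen's inequality for the normalized measure $\tfrac{2^\zaehler}{T}\,dr$ on $[t_{k-1},t_k]$ (the same Hölder step already used for \eqref{1.1}) bounds $\big|\tfrac{2^\zaehler}{T}\int_{t_{k-1}}^{t_k}f\,dr\big|^m_X$ by $\tfrac{2^\zaehler}{T}\int_{t_{k-1}}^{t_k}|f|^m_X\,dr$, so the $k$-th term is at most $\int_{t_{k-1}}^{t_k}|f(r)|^m_X\,dr$; summing over $k$ telescopes to $\int_0^{t_{2^\zaehler-1}}|f|^m_X\,dr\le\|f\|^m_{L^m(0,T;X)}$, which gives \eqref{1.4}.

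For \eqref{1.5}, the interval $[0,t_1)$ contributes exactly $\int_0^{T/2^\zaehler}|f(0)-f(s)|^m_X\,ds$. On $[t_k,t_{k+1})$ I would rewrite $f(s)=\tfrac{2^\zaehler}{T}\int_{t_{k-1}}^{t_k}f(s)\,dr$ so that the integrand becomes $\big|\tfrac{2^\zaehler}{T}\int_{t_{k-1}}^{t_k}(f(r)-f(s))\,dr\big|^m_X$, and then run the argument of the proof of \eqref{1.2} verbatim: factor $|f(r)-f(s)|_X=\tfrac{|f(r)-f(s)|_X}{|r-s|^{(1+\alpha m)/m}}\,|r-s|^{(1+\alpha m)/m}$ and apply Hölder with exponents $(m,\tfrac{m}{m-1})$. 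The only new feature is that $r\in[t_{k-1},t_k]$ and $s\in[t_k,t_{k+1}]$ now lie in adjacent subintervals, so $|r-s|\le 2T/2^\zaehler$, whence $\big(\int_{t_{k-1}}^{t_k}|r-s|^{(1+\alpha m)/(m-1)}\,dr\big)^{m-1}\le c(m,\alpha)\,(T/2^\zaehler)^{m+\alpha m}$. Collecting the powers of $T/2^\zaehler$ exactly as in \eqref{1.2} leaves $(T/2^\zaehler)^{\alpha m}$ times $\sum_{k=1}^{2^\zaehler-1}\int_{t_k}^{t_{k+1}}\int_{t_{k-1}}^{t_k}\tfrac{|f(r)-f(s)|^m_X}{|r-s|^{1+\alpha m}}\,dr\,ds$, a double sum dominated by the full Gagliardo double integral, hence by $\|f\|^m_{\mathbb{W}^\alpha_m(0,T;X)}$. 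Adding the $[0,t_1)$ term yields \eqref{1.5}.

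The only point requiring a bit of care is the constant $c(m,\alpha)$ coming from the adjacent-interval overlap $|r-s|\le 2T/2^\zaehler$ (as opposed to $|r-s|\le T/2^\zaehler$ in \eqref{1.2}); it is the sole loss relative to the clean form of \eqref{1.5} and is immaterial for the applications, where only $\widehat{\Pro}_\zaehler f\to f$ as $\zaehler\to\infty$ is used. Everything else is routine bookkeeping built from \eqref{1.1}–\eqref{1.2} and \eqref{1.1*}–\eqref{1.2*}.
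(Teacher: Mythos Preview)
Your proposal is correct and follows essentially the same route as the paper: for \eqref{1.4} the paper splits off the first subinterval and applies H\"older on each average over $[t_{k-1},t_k]$, and for \eqref{1.5} it separates the contribution of $[0,t_1)$ and bounds the remaining sum $I$ by $(T/2^\zaehler)^{\alpha m}\sum_k\int_{t_k}^{t_{k+1}}\int_{t_{k-1}}^{t_k}\tfrac{|f(r)-f(s)|^m_X}{|r-s|^{1+\alpha m}}\,dr\,ds$, exactly via the H\"older step you describe. Your observation about the constant $c(m,\alpha)=2^{1+\alpha m}$ arising from $|r-s|\le 2T/2^\zaehler$ on adjacent subintervals is accurate; the paper writes the bound without this factor, but as you note this is immaterial for the convergence used downstream.
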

\begin{proof}
	The linearity is clear. We will firstly focus on the proof of  inequality (\ref{1.4}). Then let us fix $m\geq 1$, $\kappa\in \mathbb{N}$ and $f\in L^m(0,T;X)$. By the H\"older inequality we derive that 
	\begin{align}
\lqq{		\| \widehat\Pro_\zaehler(f)\|^m_{L^m(0,T;X)}=\sum_{k=0}^{2^\zaehler-1}\int_{t_k}^{t_{k+1}}|	\Pro_\zaehler(f)(s)|^m_X ds}
&
\notag\\
		&=\frac{T}{2^\zaehler}\|f(0)\|^m_X+\left(\frac{2^\zaehler}{T}\right)^m\sum_{k=1}^{2^\zaehler-1}\int_{t_k}^{t_{k+1}}\left|\int_{t_{k-1}}^{t_{k}}	f(r)dr\right|^m_X ds\notag\\
		&\leq\frac{T}{2^\zaehler}\|f(0)\|^m_X+\left(\frac{2^\zaehler}{T}\right)^m\left(\frac{T}{2^{\zaehler}}\right)^{m-1}\sum_{k=1}^{2^\zaehler-1}\int_{t_k}^{t_{k+1}}\int_{t_{k-1}}^{t_{k}}	\left|f(r)\right|^m_Xdr ds\notag\\
		&=\frac{T}{2^\zaehler}\|f(0)\|^m_X+\int_{0}^{T-\frac{T}{2^\zaehler}}	\left|f(s)\right|^m_Xds \notag\\
		&\leq\frac{T}{2^\zaehler}\|f(0)\|^m_X+\int_{0}^{T}	\left|f(s)\right|^m_Xds,\notag
	\end{align}
	and the inequality (\ref{1.4}) follows.
	Now we are going to prove the inequality (\ref{1.5}). For this aim, let us fix $0<\alpha<\frac{1}{m}$ and $f\in W^{\alpha,m}(0,T;X)$. Then we have
	\begin{align}
\lqq{		\| \Pro _\zaehler(f)-f\|^m_{L^m(0,T;X)}=\sum_{k=0}^{2^\zaehler-1}\int_{t_k}^{t_{k+1}}|	\Pro _\zaehler(f)(s)-f(s)|^m_X ds}
&
\notag\\
		&=\int_0^\frac{T}{2^\zaehler}|f(0)-f(s)|^m_Xds\notag\\
		&\qquad+\sum_{k=1}^{2^\zaehler-1}\int_{t_k}^{t_{k+1}}\left|\frac{2^\zaehler}{T}\int_{t_{k-1}}^{t_{k}}	f(r)dr-f(s)\right|^m_X ds\label{1.3}\\
		&=\int_0^\frac{T}{2^\zaehler}|f(0)-f(s)|^m_Xds+I.\notag
	\end{align}
We note that 
\begin{align}
	I&=\left(\frac{2^\zaehler}{T}\right)^m\sum_{k=1}^{2^\zaehler-1}\int_{t_k}^{t_{k+1}}\left|\int_{t_{k-1}}^{t_{k}}	\left(f(r)-f(s)\right)dr\right|^m_X ds\notag\\
	&\leq \left(\frac{T}{2^\zaehler}\right)^{\alpha m}\sum_{k=1}^{2^\zaehler-1}\int_{t_k}^{t_{k+1}}\int_{t_{k-1}}^{t_{k}}	\frac{|f(r)-f(s)|_X^m}{|r-s|^{1+\alpha m}} drds\notag\\
	&= \left(\frac{T}{2^\zaehler}\right)^{\alpha m}\int_{\frac{T}{2^\zaehler}}^{T}\int_{0}^{T-\frac{T}{2^\zaehler}}	\frac{|f(r)-f(s)|_X^m}{|r-s|^{1+\alpha m}} drds\notag\\
	&\leq \left(\frac{T}{2^\zaehler}\right)^{\alpha m}\|f\|^m_{ W^{\alpha,m}(0,T;X)}.\notag
\end{align}
Using this last inequality, we infer from the inequality (\ref{1.3}) that 
\begin{align}
	\| \Pro _\zaehler(f)-f\|^m_{L^m(0,T;X)}\leq \int_0^\frac{T}{2^\zaehler}|f(0)-f(s)|^m_Xds+\left(\frac{T}{2^\zaehler}\right)^{\alpha m}\|f\|^m_{ W^{\alpha}_m(0,T;X)},\notag
\end{align}
and the  inequality (\ref{1.5}) follows. 
\end{proof}

\begin{lemma}\label{boundinw}
	For any $m\geq 1$ and  $n\in \mathbb{N}$, the projection  $$\Pro _\zaehler:\mathbb{W}^\alpha_m(0,T;X)\longrightarrow  \mathbb{W}^\alpha_m(0,T;X)$$ is well defined, is linear and satisfies the following inequality. 
	\begin{equation}
		\| \widehat \Pro_\zaehler(f)\|^m_{\mathbb{W}^\alpha_m(0,T;X)}\leq \|f\|^m_{\mathbb{W}^\alpha_m(0,T;X)}+\frac{T}{2^\zaehler}|f(0)|^m_X\qquad \forall f\in L^m(0,T;X).\label{1.4a}
	\end{equation}
\end{lemma}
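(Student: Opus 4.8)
The plan is to split the norm of $\mathbb{W}^{\alpha}_m(0,T;X)$ into its $L^m$-part and its Gagliardo (Slobodeckij) seminorm,
\[
\|h\|^m_{\mathbb{W}^{\alpha}_m(0,T;X)}=\|h\|^m_{L^m(0,T;X)}+\int_0^T\!\!\int_0^T\frac{|h(t)-h(s)|_X^m}{|t-s|^{1+\alpha m}}\,dt\,ds=:\|h\|^m_{L^m(0,T;X)}+[h]^m_{\mathbb{W}^{\alpha}_m},
\]
and to bound the two contributions of $\widehat{\Pro}_\zaehler f$ separately. For the $L^m$-part there is nothing new to do: inequality \eqref{1.4} of Lemma~\ref{convergenceproj} already gives $\|\widehat{\Pro}_\zaehler f\|^m_{L^m(0,T;X)}\le\|f\|^m_{L^m(0,T;X)}+\tfrac{T}{2^\zaehler}|f(0)|_X^m$, which is precisely the $L^m$-share of \eqref{1.4a}, and linearity of $\widehat{\Pro}_\zaehler$ is clear from its definition as a composition of linear maps. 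Hence everything reduces to showing that the Gagliardo seminorm does not increase under $\widehat{\Pro}_\zaehler$, i.e.\ $[\widehat{\Pro}_\zaehler f]_{\mathbb{W}^{\alpha}_m}\le[f]_{\mathbb{W}^{\alpha}_m}$.

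For this I would use the factorisation of $\widehat{\Pro}_\zaehler$ into a cell average followed by a time shift. Extend $f$ to $[-\tfrac{T}{2^\zaehler},T]$ by $\bar f\equiv f(0)$ on $[-\tfrac{T}{2^\zaehler},0)$; then on $[0,T]$ one has $\widehat{\Pro}_\zaehler f=\shiftpro_\zaehler(\pro_\zaehler\bar f)$, where $\pro_\zaehler$ is the cell average (conditional expectation) on the extended dyadic grid $\{t_k\}_{k=-1}^{2^\zaehler}$ of $[-\tfrac{T}{2^\zaehler},T]$ and $\shiftpro_\zaehler$ is translation by $\tfrac{T}{2^\zaehler}$. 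The shift is an isometry for the seminorm: the substitution $t=r+\tfrac{T}{2^\zaehler}$, $s=r'+\tfrac{T}{2^\zaehler}$ leaves $|t-s|=|r-r'|$ unchanged. So it suffices to prove the contractivity of the cell average, $[\pro_\zaehler\bar f]_{\mathbb{W}^{\alpha}_m}\le[\bar f]_{\mathbb{W}^{\alpha}_m}$, and to check that passing from $f$ on $[0,T]$ to $\bar f$ on $[-\tfrac{T}{2^\zaehler},T]$ adds only the constant block, contributing the $\tfrac{T}{2^\zaehler}|f(0)|_X^m$ correction together with a nonnegative cross term $2\int_{-T/2^\zaehler}^{0}\!\int_{0}^{T}|f(0)-f(s)|_X^m|t-s|^{-1-\alpha m}\,ds\,dt$ that is controlled by the part of $[f]^m_{\mathbb{W}^{\alpha}_m}$ living near $s=0$ (or, depending on the exact norm fixed in Appendix~\ref{dbouley-space}, is folded into the $L^m$-term). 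For $[\pro_\zaehler\bar f]_{\mathbb{W}^{\alpha}_m}\le[\bar f]_{\mathbb{W}^{\alpha}_m}$ I would pass to the difference-quotient form $[h]^m_{\mathbb{W}^{\alpha}_m}=2\int_0^T y^{-1-\alpha m}\big(\int_0^{T-y}|h(s+y)-h(s)|_X^m\,ds\big)\,dy$, write $\pro_\zaehler\bar f(s+y)-\pro_\zaehler\bar f(s)$ as a genuine conditional expectation of the translated differences of $\bar f$, and apply Jensen's inequality for the convex function $|\cdot|_X^m$ uniformly in $y$.

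The main obstacle is exactly this seminorm estimate for the cell average. Because of the singular kernel $|t-s|^{-(1+\alpha m)}$, a cell-by-cell application of Jensen's inequality fails: replacing $f$ by its averages on $I_{k-1}$ re-weights the two arguments $t,s$ non-uniformly against the kernel, so one cannot simply bound $\int_{I_j\times I_k}$ of $\widehat{\Pro}_\zaehler f$ by $\int_{I_{j-1}\times I_{k-1}}$ of $f$. The way around this is to treat the averaging as what it is---a conditional expectation with respect to the $\sigma$-algebra generated by the cells---and to invoke (or reprove via the difference-quotient representation above) the contractivity of conditional expectation on the fractional Sobolev space $W^{\alpha,m}$ in the admissible range $0<\alpha<\tfrac1m$, where Jensen's inequality applies globally in the increment variable rather than cell by cell. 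The remaining work is the boundary bookkeeping on $[0,t_1^{\zaehler})$, where $\widehat{\Pro}_\zaehler f$ is frozen at $f(0)$, which must be done carefully so that the final constant in \eqref{1.4a} is exactly $1$ and not merely an abstract constant.
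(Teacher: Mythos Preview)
Your strategy is genuinely different from the paper's. The paper does \emph{not} split the $\mathbb{W}^{\alpha}_m$-norm into an $L^m$-part and a seminorm, nor does it invoke conditional expectations or the shift--average factorisation; in the paper's convention $\|\cdot\|_{\mathbb{W}^{\alpha}_m}$ \emph{is} the Gagliardo seminorm (see Appendix~\ref{dbouley-space}), and the proof works directly with that double integral. The paper's key device is one you explicitly dismissed: a cell-by-cell Jensen/H\"older argument. For $s\in[t_k,t_{k+1})$, $t\in[t_l,t_{l+1})$, the change of variables $v=u-\tau(k-l)$ in the second cell average gives
\[
\Pro_\zaehler f(s)-\Pro_\zaehler f(t)=\frac1\tau\int_{t_k}^{t_{k+1}}\bigl(f(u)-f(u-\tau(k-l))\bigr)\,du,
\]
a \emph{single} average of a fixed translate of the increment. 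One H\"older inequality then yields $|\Pro_\zaehler f(s)-\Pro_\zaehler f(t)|^m\le\tau^{-1}\int_{t_k}^{t_{k+1}}|f(u)-f(u-\tau(k-l))|^m\,du$, and replacing $|t-s|$ by $\tau|l-k|$ (the cell spacing) gives the seminorm of $f$ back after summing in $k,l$. So the ``non-uniform reweighting'' objection you raised is avoided precisely by aligning the two averages to a common integration variable before applying Jensen. Your conditional-expectation route is more conceptual and, if completed, would yield the same qualitative statement; the paper's route is shorter and entirely elementary.

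There is, however, a real gap in your alternative. After extending $f$ by the constant $f(0)$ on $[-T/2^\zaehler,0)$, the seminorm of $\bar f$ on the extended interval picks up the cross term
\[
2\int_{-T/2^\zaehler}^{0}\!\int_{0}^{T}\frac{|f(0)-f(s)|_X^m}{|t-s|^{1+\alpha m}}\,ds\,dt,
\]
and neither of your suggested controls works: this is not dominated by $[f]^m_{\mathbb{W}^{\alpha}_m}$ (there is no matching piece of the original seminorm), nor is it of the form $\frac{T}{2^\zaehler}|f(0)|_X^m$ (the latter is the $L^m$-correction, not a seminorm correction). Without a genuine bound for this term your argument does not close with constant $1$. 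If you want to salvage your approach, the cleanest fix is to avoid the extension altogether and show directly, via the same alignment-of-averages identity above, that the seminorm of $\widehat{\Pro}_\zaehler f$ on $[0,T]$ is controlled by the seminorm of $f$ on $[0,T]$; the constant block on $[0,t_1^\zaehler)$ then contributes nothing to the diagonal cells and only the usual shifted-increment terms off-diagonally.
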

\begin{proof}
	Let us fix $m\geq 1$, $\kappa\in \mathbb{N}$ and $f\in \mathbb{W}^\alpha_m(0,T;X)$. By the definition of the space $\mathbb{W}^\alpha_m(0,T;X)$ we can write
	\begin{align}\label{bevor}
&	\| \widehat\Pro_\zaehler(f)\|^m_{\mathbb{W}^\alpha_m(0,T;X)}\le \frac 1\tau \sum_{k,l=0}^{2^\zaehler-1}\int_{t^\kappa_k}^{t^\kappa_{k+1}}\int_{t^\kappa_l}^{t^\kappa_{l+1}}\frac {|	\Pro_\zaehler(f)(s)-\Pro_\zaehler(f)(t)|^m_X }{|t-s|^{\alpha m+1}} ds\, dt.
\end{align}
Let us consider the inner part ($\tau=T2^{-\kappa}$ and $m'$ conjugate to $m$)
\DEQS
\lqq{ 
\frac { \lk|	\Pro_\zaehler(f)(s)-\Pro_\zaehler(f)(t)\rk|^m_X }
{|t-s|^{\alpha m+1}}
} &&
\\
&\le &\frac 1{\tau^m}
\frac 
{  \lk|	\int_{t^\kappa_k}^{t^\kappa_{k+1}} f(s)-f(s-\tau(l-k))\, ds\rk|^m_X } {|t-s|^{\alpha m+1}}
\\
&\le &\frac 1{\tau^m}\frac 
{  \int_{t^\kappa_k}^{t^\kappa_{k+1}}\lk|	 f(s)-f(s-\tau(l-k))\rk|^m_X\, ds \, \tau^\frac m{m'} } {(\tau|l-k|)^{\alpha m+1}}
\\
&\le &\frac 1{\tau^m}\frac { \tau^\frac m{m'} } {(\tau|l-k|)^{\alpha m+1}}
 \int_{t^\kappa_k}^{t^\kappa_{k+1}} {\lk|	 f(s)-f(s-\tau(l-k))\rk|^m_X}
 \frac {(\tau|l-k|)^{\alpha m+1} } {(\tau|l-k|)^{\alpha m +1}}\, ds \,
\\
&\le &\frac { \tau^\frac m{m'} }{\tau^m}
 \int_{t^\kappa_k}^{t^\kappa_{k+1}} \frac {\lk|	 f(s)-f(s-\tau(l-k))\rk|^m_X}
 {(\tau|l-k|)^{\alpha m+1} } \, ds \,
 \\
&\le &\frac 1\tau 
 \int_{t^\kappa_k}^{t^\kappa_{k+1}} \frac {\lk|	 f(s)-f(s-\tau(l-k))\rk|^m_X}
 {(\tau|l-k|)^{\alpha m+1} } \, ds. \,
\EEQS
Substituting above in \eqref{bevor}, we get
	\begin{align*}
&	\| \widehat\Pro_\zaehler(f)\|^m_{\mathbb{W}^\alpha_m(0,T;X)}\le \frac 1\tau \sum_{k,l=0}^{2^\zaehler-1}\int_{t^\kappa_k}^{t^\kappa_{k+1}}\int_{t^\kappa_l}^{t^\kappa_{l+1}}
 \int_{t^\kappa_k}^{t^\kappa_{k+1}} \frac {\lk|	 f(s)-f(s-\tau|l-k|)\rk|^m_X}
 {(\tau|l-k|)^{\alpha m+1} } \, ds \,dr\, dt
 \\
 &\le \sum_{k,l=0}^{2^\zaehler-1}\int_{t^\kappa_l}^{t^\kappa_{l+1}}
 \int_{t^\kappa_k}^{t^\kappa_{k+1}} \frac {\lk|	 f(s)-f(s-\tau|l-k|)\rk|^m_X}
 {(\tau|l-k|)^{\alpha m+1} } \, ds \, dt \le C\| \widehat\Pro_\zaehler(f)\|^m_{\mathbb{W}^\alpha_m(0,T;X)}.
 \end{align*}
\end{proof}

\section{Function spaces and the Aubin-Lions-Simon compactness theorem}\label{dbouley-space}

Let $B$ be a separable Banach space, $0\le c<d<\infty$.
Let $C^{(\beta)}_b(c,d;B)$ denote a set of all continuous and bounded functions $u:[c,d]\to B$ such that
$$
\Vert u\Vert_{C_b^{\beta}(c,d;B)} 
:=\sup_{c\le t\le d} \vert u(t)\vert_B +\sup_{\substack{c\le s,t\le d\\ t\not= s}} \frac{ \vert u(t)-u(s)\vert_{B}}{\vert t-s\vert^\beta},
$$
is finite. The space $C_b^{(\beta)}(c,d;E)$  endowed with the norm 
$\Vert \cdot\Vert_{C_b^{\beta}(c,d;B)}$ is a Banach space.
Let
 $$ L^p(c,d; ;B)=\left\{ u:[c,d)\to B: u~\mbox{ measurable and } \int_{[c,d)} \vert u(t)\vert_{B}^p\, dt
 <\infty\right\}.$$
In addition, for $1< p < \infty$  let $W^1_p(\CO)$ be the standard Sobolev space defined by (compare \cite[p.\ 263]{Brezis})
\begin{align*} 
\lqq{W^{1}_p(\CO)
:=\left\{ u\in L^p(\CO)\mid \exists g_1,\cdots,g_d\in L^p(\CO)\mbox{ such that }\phantom{\bigg\vert}\right.}
\\
&{}\left. \int_\CO u(x)\frac{\partial \phi(x)}{\partial x_i} \, dx =-\int_\CO g_i(x)\phi(x)\, dx \quad \forall \phi\in C^\infty_0(\Ocal), \forall i=1,\ldots ,d \right\}
\end{align*} 
equipped with norm
$$
\vert u\vert_{W^1_p}:=\vert u\vert_{L^p}+\sum_{j=1}^d\left\vert\frac{\partial u}{\partial x_j}\right\vert_{L^p},\quad u\in W^1_p(\CO).
$$
Given an integer $m\ge2$ and a real number $1\le p<\infty$, we define by induction the space
\begin{align*} 
W^{m}_p(\CO) :=\left\{ u\in W^{m-1}_p(\CO)\mid D u\in W^{m-1}_p(\CO) \right\}
\end{align*} 
equipped with norm
$$
\vert u\vert_{W^m_p}:=\vert u\vert_{L^p}+\sum_{\alpha =1}^m\left\vert D_\alpha u\right\vert_{L^p},\quad u\in W^m_p(\CO).
$$
Let $H_2^m(\CO):=W^m_2(\CO)$, and for $\rho\in(0,1)$ let $H_2^\rho (\CO)$ be the real interpolation  space given by $H^\rho _2(\CO):=(L^2(\CO),H^1_2(\CO))_{\rho ,2}$.
In addition, let $H^{-1}_2(\CO)$ be the dual space of $H^1_2(\CO)$ and for $\rho \in(0,1)$ let $H^{-\rho }_2(\CO)$ be the  real interpolation  space given by $H^{-\rho }_2(\CO):=(L^2(\CO),H^{-1}_2(\CO))_{1-\rho ,2}$.
Note, by Theorem 3.7.1 \cite{bergh}, $H^{-\rho }_2(\CO)$ is dual to $H^\rho _2(\CO)$, $\rho \in(0,1)$. Furthermore, we have  $(H^{-\rho }_2(\CO),H^{\rho }_2(\CO))_{\frac 12,2}=L^2(\CO)$ and $(H^\alpha_2(\CO),H^\beta_2(\CO))_{\rho,2}=H^\theta_2(\CO)$ for $\theta=\alpha(1-\rho)+\beta\rho$, $\rho\in(0,1)$ and $\vert\alpha\vert,\vert\beta\vert\le 1$.

Since we need it to tackle the compactness, let us introduce the following space.
Given $p\in (1,\infty)$, $\alpha\in(0,1)$, let $\WW ^ {\alpha}_p (I;B)$ be the Sobolev space
of all $u\in L^p(0,\infty;B)$ such that
$$
\int_I  \int_{I\cap [t,t+1]}   \;\frac{\vert u(t)-u(s)\vert_B ^ p}{ \vert t-s\vert ^ {1+\alpha p}}\,ds\,dt<\infty;
$$
equipped with the norm
$$
\left\Vert u\right\Vert_{ \WW^ {\alpha}_p(I ;B)}:=\left( \int_I  \int_{I\cap [t,t+1]}  \;\frac{\vert u(t)-u(s)\vert_B ^ p}{ \vert t-s\vert ^ {1+\alpha p}}\,ds\,dt\right) ^ \frac 1p.
$$

\begin{theorem}\label{th-gutman}
Let $B_0\subset B\subset B_1$ be Banach spaces, $B_0$ and $B_1$ reflexive, with compact embedding of $B_0$ to $B$. Let $p\in(1,\infty)$ and $\alpha\in(0,1)$ be given. Let $X$ be the space
$$
X=L^p(0,T;B_0)\cap \WW ^{\alpha}_p (0,T;B_1).
$$
Then the embedding of $X$ to $L^p(0,T;B)$
is compact.
\end{theorem}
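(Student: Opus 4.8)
The plan is to deduce the statement from the classical Fréchet--Kolmogorov-type characterisation of relative compactness in a vector-valued $L^p$-space (due to J.~Simon): a bounded set $\mathcal F\subset L^p(0,T;B)$, $1<p<\infty$, is relatively compact if and only if \textup{(a)} for every $0<t_1<t_2<T$ the set $\bigl\{\int_{t_1}^{t_2}u(t)\,dt:u\in\mathcal F\bigr\}$ is relatively compact in $B$, and \textup{(b)} $\sup_{u\in\mathcal F}\|u(\cdot+h)-u\|_{L^p(0,T-h;B)}\to 0$ as $h\to 0^{+}$. Since the inclusion $B_0\subset B$ is in particular continuous, $X\subset L^p(0,T;B_0)\subset L^p(0,T;B)$, so the embedding $X\hookrightarrow L^p(0,T;B)$ is well defined; it therefore suffices to fix an arbitrary bounded set $\mathcal F\subset X$, say $\|u\|_{L^p(0,T;B_0)}+\|u\|_{\WW^{\alpha}_p(0,T;B_1)}\le M$ for $u\in\mathcal F$, and to verify \textup{(a)} and \textup{(b)}.

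Condition \textup{(a)} is immediate: for $u\in\mathcal F$ and $0<t_1<t_2<T$, Hölder's inequality in time gives $\bigl\|\int_{t_1}^{t_2}u(t)\,dt\bigr\|_{B_0}\le(t_2-t_1)^{1-1/p}\|u\|_{L^p(0,T;B_0)}\le(t_2-t_1)^{1-1/p}M$, so the set in \textup{(a)} is bounded in $B_0$ and hence, by compactness of $B_0\hookrightarrow B$, relatively compact in $B$.

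Condition \textup{(b)} is the heart of the matter, and I would carry it out in two steps. First, boundedness in $\WW^{\alpha}_p(0,T;B_1)$ controls the $B_1$-modulus of continuity in time: for $0<h<1$ and $u\in\mathcal F$,
$$\|u(\cdot+h)-u\|_{L^p(0,T-h;B_1)}\le C\,h^{\alpha}\,\|u\|_{\WW^{\alpha}_p(0,T;B_1)}\le CMh^{\alpha}.$$
This is obtained by the standard device: for $t$ with $t+2h<T$, estimate $|u(t+h)-u(t)|_{B_1}^{p}$ by averaging the elementary bound $|u(t+h)-u(t)|_{B_1}^{p}\le 2^{p-1}\bigl(|u(t+h)-u(s)|_{B_1}^{p}+|u(s)-u(t)|_{B_1}^{p}\bigr)$ over $s\in(t+h,t+2h)$; inserting the weights $|s-t-h|^{1+\alpha p}$ and $|s-t|^{1+\alpha p}$, which are comparable to $h^{1+\alpha p}$ on this range, and integrating in $t$ recovers $Ch^{\alpha p}$ times the Slobodeckij seminorm, while the short leftover interval near $t=T$ is treated symmetrically with a backward average. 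Secondly, I would transfer this to $B$ via the Ehrling lemma, which applies because $B_0\hookrightarrow B$ is compact and $B\hookrightarrow B_1$ continuous (as is implicit in the notation $B_0\subset B\subset B_1$): for every $\eta>0$ there is $C_\eta>0$ with $\|v\|_B\le\eta\|v\|_{B_0}+C_\eta\|v\|_{B_1}$ for all $v\in B_0$. Applying this to $v=u(t+h)-u(t)$, integrating in $t$, and using $\|u(\cdot+h)-u\|_{L^p(0,T-h;B_0)}\le 2M$ (which carries no decay) together with the bound in $B_1$ (which does), one gets
$$\sup_{u\in\mathcal F}\|u(\cdot+h)-u\|_{L^p(0,T-h;B)}\le 2M\eta+C_\eta C M h^{\alpha},$$
and the right-hand side is made arbitrarily small by first choosing $\eta$ small and then $h$ small; this is exactly \textup{(b)}.

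Having verified \textup{(a)} and \textup{(b)}, the compactness criterion shows that every bounded subset of $X$ is relatively compact in $L^p(0,T;B)$, i.e.\ the embedding $X\hookrightarrow L^p(0,T;B)$ is compact. The main obstacle is \textup{(b)}: the subtlety is to produce genuine decay in $h$, and this forces one to combine the fractional-Sobolev control available only in the weaker space $B_1$ with the stronger-space bound through the Ehrling interpolation inequality — precisely the mechanism of the Aubin--Lions--Simon lemma. The modulus-of-continuity estimate for the seminorm is routine but must be done with some care because of the truncation to $[t,t+1]$ in the definition of $\WW^{\alpha}_p$ and the boundary interval near $t=T$. (The reflexivity of $B_0$ and $B_1$ assumed in the statement is not needed for the argument above; it is convenient in other, weak-compactness-based proofs of the lemma.)
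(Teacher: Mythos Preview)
Your argument is correct and is precisely the standard proof of the Aubin--Lions--Simon lemma via Simon's Fr\'echet--Kolmogorov criterion combined with Ehrling's inequality. The paper itself does not supply a proof at all: its entire ``proof'' is the single citation ``See \cite[p.~86, Corollary~5]{Simon1986}'', so you have in fact reproduced the content of that reference rather than diverged from the paper's approach.
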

\begin{proof}
See \cite[p. 86, Corollary 5]{Simon1986}.
\end{proof}

\section*{Acknowledgements} 

\noindent E.H. gratefully acknowledges the supported by the Austrian Science Foundation, Project number: P34681. M.A.H. thanks the Austrian Academy of Sciences for the funding of the academic visit at Montanuniversit\"at Leoben in June / July 2022 in the framework of a JESH project 2019. 
The research of M.A.H. was supported by the research project INV-2023-162-2850 of Facultad de Ciencias at Universidad de los Andes.

\renewcommand{\emph}{\textsl}


\begin{thebibliography}{99}
\bibitem{AR91}
Albeverio, S., and R{\"o}ckner, M., 1991.
\newblock Stochastic differential equations in infinite dimensions: solutions via Dirichlet forms.
\newblock \emph{Probab. Theory Related Fields} \textbf{89}, 347--386.

\bibitem{GG94}
Gatarek, D., and Goldys, B.
\newblock On weak solutions of stochastic equations in Hilbert spaces.
\newblock \emph{Stoch. Stoch. Rep.} \textbf{46}, 41--51, 1994.

\bibitem{GM72}
Gierer, A. and Meinhardt, H.
\newblock A theory of biological pattern formation.
\newblock \emph{Kybernetik} \textbf{12}, 30--39, 1972.

\bibitem{Gine}
Araujo, A. and Gin\'e, E.
\newblock On tails and domains of attraction of stable measures in Banach spaces.
\newblock \emph{Trans. Amer. Math. Soc.} \textbf{248}, 1--1, 1979.

\bibitem{apple}
Applebaum, D.
\newblock \emph{L\'evy Processes and Stochastic Calculus}, 2nd ed.
\newblock Cambridge University Press, Cambridge, 2009.

\bibitem{anne2}
de Bouard, A. and Debussche, A.
\newblock A stochastic nonlinear Schr\"odinger equation with multiplicative noise.
\newblock \emph{Comm. Math. Phys.} \textbf{205}, 161--181, 1999.

\bibitem{anne1}
de Bouard, A., and Debussche, A.
\newblock The stochastic nonlinear Schr\"odinger equation in $H^1$.
\newblock \emph{Stochastic Anal. Appl.} \textbf{21}, 97--126, 2003.

\bibitem{meandanne}
de Bouard, A., and Hausenblas, E.
\newblock The stochastic nonlinear Schr\"odinger equation with L\'evy noise of infinite activity.
\newblock Preprint, arXiv:1702.02523.

\bibitem{bally}
Bally, V., and Saussereau, B., 2004.
\newblock A relative compactness criterion in Wiener–Sobolev spaces and application to semi-linear stochastic PDEs.
\newblock \emph{J. Funct. Anal.} \textbf{210}(2), 465--515.

\bibitem{mezerdi}
Bahlali, K., Mezerdi, B., N'zi, M., and Ouknine, Y.
\newblock Weak solutions and a Yamada-Watanabe theorem for FBSDEs.
\newblock \emph{Random Oper. Stoch. Equ.} \textbf{15}, 271--285, 2007.

\bibitem{poisson1}
Barczy, M., Li, Z., and Pap, G.
\newblock Yamada-Watanabe results for stochastic differential equations with jumps.
\newblock \emph{Int. J. Stoch. Anal.}, pages Art. ID 460472, 23, 2015.

\bibitem{bergh}
Bergh, J., and L\"{o}fstr\"{o}m, J.
\newblock \emph{Interpolation Spaces: An Introduction}.
\newblock Grundlehren der Mathematischen Wissenschaften, No. 223, Springer, Berlin-New York, 1976.

\bibitem{billingsley}
Billingsley, P.
\newblock \emph{Convergence of Probability Measures}, 2nd ed.
\newblock Wiley Series in Probability and Statistics, John Wiley \& Sons, Inc., New York, 1999.

\bibitem{Brezis}
Brezis, H.
\newblock \emph{Functional Analysis, Sobolev Spaces and Partial Differential Equations}.
\newblock Springer, New York, 2010.

\bibitem{BG99}
Brze\'zniak, Z., and Gatarek, D.
\newblock Martingale solutions and invariant measures for stochastic evolution equations in Banach spaces.
\newblock \emph{Stoch. Process. Appl.} \textbf{84}, 187--225, 1999.

\bibitem{BHZ13}
Brze\'zniak, Z., Hausenblas, E., and Zhu, J.
\newblock 2D stochastic Navier-Stokes equations driven by jump noise.
\newblock \emph{Nonlinear Anal.} \textbf{79}, 122--139, 2013.

\bibitem{Brz+Haus_2009}
Brze\'zniak, Z., and Hausenblas, E.
\newblock Maximal regularity for stochastic convolutions driven by L\'evy processes.
\newblock \emph{Probab. Theory Relat. Fields} \textbf{145}, 615--637, 2009.

\bibitem{uniquness1}
Brze\'zniak, Z., Hausenblas, E., and Motyl, E.
\newblock Uniqueness in law of the stochastic convolution process driven by L\'evy noise.
\newblock \emph{Electron. J. Probab.} \textbf{18}, no. 57, 15, 2013.

\bibitem{reacdiff}
Brze\'zniak, Z., Hausenblas, E., and Razafimandimby, P.~A.
\newblock Stochastic reaction diffusion equation driven by jump processes.
\newblock \emph{Potential Anal.}, 2018, online.

\bibitem{zdzandme}
Brze\'zniak, Z., and Hausenblas, E.
\newblock Uniqueness in law of the It\^o integral with respect to L\'evy noise.
\newblock In \emph{ Seminar on Stochastic Analysis, Random Fields and Applications VI}, volume 63 of \emph{Progr. Probab.}, pages 37--57. Birkh\"auser/Springer Basel AG, Basel, 2011.

\bibitem{BZ10}
Brze\'zniak, Z., and Zabczyk, J.
\newblock Regularity of Ornstein–Uhlenbeck processes driven by a L\'evy white noise.
\newblock \emph{Potential Anal.} \textbf{32}, 153--188, 2010.

\bibitem{zmtypep}
Brze\'zniak, Z.
\newblock Stochastic Partial Differential Equations in M-Type $2$ Banach Spaces.
\newblock \emph{Potential Anal.} \textbf{4}, 1--45, 1995.

\bibitem{Ce03}
Cerrai, S.
\newblock Stochastic reaction-diffusion systems with multiplicative noise and non-Lipschitz reaction term.
\newblock \emph{Probab. Theory Related Fields} \textbf{125}(2), 271--304, 2003.

\bibitem{CR04}
Cerrai, S., and R{\"o}ckner, M.
\newblock Large deviations for stochastic reaction-diffusion systems with multiplicative noise and non-Lipschitz reaction term.
\newblock \emph{Ann. Probab.} \textbf{32} (1B), 1100--1139, 2004.

\bibitem{cherny}
Cherny, A.S.
\newblock On the strong and weak solutions of stochastic differential equations governing Bessel processes.
\newblock \emph{Stochastics Stochastics Rep.} \textbf{70}, 213--219, 2000.

\bibitem{CL69}
Cesari, L., and Lions, J.-L.
\newblock \emph{Quelques m{\'e}thodes de r{\'e}solution des probl{\`e}mes aux limites non lin{\'e}aires}.
\newblock Dunod, Paris, 1969.

\bibitem{CL83}
Crandall, M.G., and Lions, P.-L.
\newblock Viscosity solutions of Hamilton-Jacobi equations.
\newblock \emph{Trans. Amer. Math. Soc.} \textbf{277} (1), 1--42, 1983.

\bibitem{DaPrZa}
Da Prato, G., and Zabczyk, J.
\newblock \emph{Stochastic Equations in Infinite Dimensions}, 1st ed.
\newblock Encyclopedia of Mathematics and its Applications, Cambridge University Press, Cambridge, 1992.

\bibitem{DPR04}
Da Prato, G., and R{\"o}ckner, M.
\newblock Weak solutions to stochastic porous media equations.
\newblock \emph{J. Evol. Equ.} \textbf{4} (2), 249--271, 2004.

\bibitem{dettweiler}
Dettweiler, E.
\newblock The martingale problem on Banach spaces.
\newblock \emph{Exposition. Math.} \textbf{10} (1), 3--59, 1992.

\bibitem{DHI11}
Debussche, A., H\"ogele, M., and Imkeller, P.
\newblock Asymptotic first exit times of the Chafee-Infante equation with small heavy-tailed L\'evy noise.
\newblock \emph{Electron. Commun. Probab.} \textbf{16}, 213--225, 2011.

\bibitem{DHI13}
Debussche, A., H\"ogele, M., and Imkeller, P.
\newblock The Dynamics of Nonlinear Reaction-Diffusion Equations with Small L\'evy Noise.
\newblock In \emph{Lecture Notes in Mathematics}, vol. 2085. Springer, Berlin, 2013.

\bibitem{DX07}
Dong, Z., and Xu, T.G.
\newblock One-dimensional stochastic Burgers’ equations driven by L\'evy processes.
\newblock \emph{J. Funct. Anal.} \textbf{243}, 631--678, 2007.

\bibitem{DX09}
Dong, Z., and Xie, Y.
\newblock Global solutions of stochastic 2D Navier-Stokes equations with L{\'e}vy noise.
\newblock \emph{Sci. China Ser. A} \textbf{52}(7), 1497--1524, 2009.

\bibitem{dudley2002}
Dudley, R.M.
\newblock \emph{Real Analysis and Probability}.
\newblock Cambridge University Press, Cambridge, 2002.

\bibitem{engelbert}
Engelbert, H.
\newblock On the theorem of T. Yamada and S. Watanabe.
\newblock \emph{Stochastics Stochastics Rep.} \textbf{36}, 205--216, 1991.

\bibitem{MR838085}
Ethier, S.N., and Kurtz, T.G.
\newblock \emph{Markov Processes: Characterization and Convergence}.
\newblock Wiley Series in Probability and Mathematical Statistics, John Wiley \& Sons Inc., New York, 1986.

\bibitem{Ev10}
Evans, L.C.
\newblock \emph{Partial Differential Equations}, 2nd ed.
\newblock Graduate Studies in Mathematics, vol. 19, American Mathematical Society, Providence, RI, 2010.

\bibitem{FS13}
Fernando, B.P.W., and Sritharan, S.
\newblock Nonlinear Filtering of Stochastic Navier-Stokes Equation with Ito-L{\'e}vy Noise.
\newblock \emph{Stoch. Anal. Appl.} \textbf{31}(3), 381--426, 2013.

\bibitem{FH61}
Fitzhugh, R.
\newblock Impulses and physiological states in theoretical models of nerve membrane.
\newblock \emph{Biophys. J.} \textbf{1}, 445--466, 1961.

\bibitem{FG95}
Flandoli, F., and G{a}tarek, D.
\newblock Martingale and stationary solutions for stochastic Navier–Stokes equations.
\newblock \emph{Probab. Theory Related Fields} \textbf{102}, 367--391, 1995.

\bibitem{Fi37}
Fisher, R.A.
\newblock The wave of advance of advantageous genes.
\newblock \emph{Ann. Eugen.} \textbf{7}, 335--369, 1937.

\bibitem{Fr64}
Friedman, A.
\newblock \emph{Partial Differential Equations of Parabolic Type}.
\newblock Prentice-Hall, Englewood Cliffs, NJ, 1964.

\bibitem{granas}
Granas, A., and Dugundji, J.
\newblock \emph{Fixed Point Theory}.
\newblock Springer Monographs in Mathematics, Springer, New York, 2003.

\bibitem{GS83}
Gray, P., and Scott, S. K.
\newblock Autocatalytic reactions in the isothermal continuous stirred tank reactor: isolas and other forms of multistability.
\newblock \emph{Chem. Eng. Sci.} \textbf{38}, 29--43, 1983.

\bibitem{Ha14}
Hairer, M.
\newblock A theory of regularity structures.
\newblock \emph{Invent. Math.} \textbf{198}, 269--504, 2014.

\bibitem{Erika05}
Hausenblas, E.
\newblock Existence, uniqueness and regularity of parabolic SPDEs driven by Poisson random measure.
\newblock \emph{Electron. J. Probab.} \textbf{10}, 1496--1546, 2005.

\bibitem{Erika07}
Hausenblas, E.
\newblock SPDEs driven by Poisson Random measure with non Lipschitz coefficients.
\newblock \emph{Probab. Theory Related Fields} \textbf{137}, 161--200, 2007.

\bibitem{HG13}
Hausenblas, E., and Giri, A.
\newblock Stochastic Burgers equation with polynomial nonlinearity driven by L\'evy process.
\newblock \emph{Commun. Stoch. Anal.} \textbf{7}, 91--112, 2013.

\bibitem{EAkash22}
Hausenblas, E., and Panda, A.A.
\newblock The Stochastic Gierer-Meinhardt System.
\newblock \emph{Appl. Math. Optim.} \textbf{85}, 24, 2022.

\bibitem{jonas}
Hausenblas, E., and T{\"o}lle, J.M.
\newblock The Stochastic Klausmeier System and A Stochastic Schauder-Tychonoff Type Theorem.
\newblock \emph{Potential Anal.}, 2023, online.

\bibitem{ankit}
Hausenblas, E., Kumar, A., and T{\"o}lle, J.M.
\newblock A stochastic Schauder-Tychonoff type theorem.
\newblock Preprint, arXiv, 2025.

\bibitem{He81}
Henry, D.
\newblock \emph{Geometric Theory of Semilinear Parabolic Equations}.
\newblock Lecture Notes in Mathematics, Vol. 840, Springer, Berlin, 1981.

\bibitem{HH52}
Hodgkin, A. L., and Huxley, A. F.
\newblock A quantitative description of membrane current and its application to conduction and excitation in nerves.
\newblock \emph{J. Physiol.} \textbf{117}, 500--544, 1952.

\bibitem{ikeda}
Ikeda, N., and Watanabe, S.
\newblock \emph{Stochastic Differential Equations and Diffusion Processes}, 2nd ed.
\newblock North-Holland Mathematical Library, Vol. 24, North-Holland Publishing Co., Amsterdam, 1989.

\bibitem{jacod}
Jacod, J.
\newblock Weak and strong solutions of stochastic differential equations.
\newblock \emph{Stochastics} \textbf{3}, 171--191, 1980.

\bibitem{JPW10}
Jacob, N., Potrykus, A., and Wu, J.-L.
\newblock Solving a non-linear stochastic pseudo-differential equation of Burgers type.
\newblock \emph{Stoch. Process. Appl.} \textbf{120}, 2447--2467, 2010.

\bibitem{kallenberg}
Kallenberg, O.
\newblock \emph{Foundations of Modern Probability}, 2nd ed.
\newblock Probability and its Applications, Springer-Verlag, New York, 2002.

\bibitem{kallenberg2}
Kallenberg, O.
\newblock \emph{Foundations of Modern Probability}, 3rd ed.
\newblock Probability and its Applications, Springer-Verlag, New York, 2021.

\bibitem{kechris}
Kechris, A.~S.
\newblock \emph{Classical Descriptive Set Theory}.
\newblock Graduate Texts in Mathematics, Vol. 156, Springer-Verlag, New York, 1995.

\bibitem{KS70}
Keller, E.F., and Segel, L. A.
\newblock Initiation of Slime Mold Aggregation Viewed as an Instability.
\newblock \emph{J. Theoret. Biol.} \textbf{26}, 399--415, 1970.

\bibitem{Ko08}
Kotelenz, P.
\newblock \emph{Stochastic Ordinary and Stochastic Partial Differential Equations: Transition from Microscopic to Macroscopic Equations}.
\newblock Stochastic Modelling and Applied Probability, Vol. 58, Springer, New York, 2008.

\bibitem{KPP37}
Kolmogorov, A., Petrovskii, I., and Piskunov, N.
\newblock Study of a Diffusion Equation That Is Related to the Growth of a Quality of Matter and Its Application to a Biological Problem.
\newblock \emph{Moscow Univ. Math. Bull.} \textbf{1}, 1--26, 1937.

\bibitem{KR82}
Krylov, N.V., and Rozovskii, B. L.
\newblock Stochastic evolution equations.
\newblock \emph{J. Soviet Math.} \textbf{16}, 1233--1277, 1981. (Trans. from Russian).

\bibitem{Kr87}
Krylov, N.V.
\newblock \emph{Nonlinear Elliptic and Parabolic Equations of the Second Order}.
\newblock Reidel Publishing, Dordrecht, 1987. (Translated from Russian).

\bibitem{kurtz1}
Kurtz, T.~G.
\newblock The Yamada-Watanabe-Engelbert theorem for general stochastic equations and inequalities.
\newblock \emph{Electron. J. Probab.} \textbf{12}, 951--965, 2007.

\bibitem{kurtz2}
Kurtz, T.~G.
\newblock Weak and strong solutions of general stochastic models.
\newblock \emph{Electron. Commun. Probab.} \textbf{19}, no. 58, 16, 2014.

\bibitem{Ponce}
Linares, F., and Ponce, G.
\newblock \emph{Introduction to Nonlinear Dispersive Equations}, 2nd ed.
\newblock Springer, New York, 2015.

\bibitem{linde}
Linde, W.
\newblock \emph{Probability in Banach Spaces: Stable and Infinitely Divisible Distributions}, 2nd ed.
\newblock Wiley Series in Probability and Mathematical Statistics, John Wiley and Sons, Ltd., Chichester, 1986.

\bibitem{JLL69}
Lions, J.-L.
\newblock \emph{Quelques m{\'e}thodes de r{\'e}solution des probl{\`e}mes aux limites non lin{\'e}aires}.
\newblock Dunod, Gauthier Villars, Paris, 1969.

\bibitem{LR10}
Liu, W., and R{\"o}ckner, M.
\newblock SPDE in Hilbert space with locally monotone coefficients.
\newblock \emph{J. Funct. Anal.} \textbf{259} (11), 2902--2922, 2010.

\bibitem{roecknerwei}
Liu, W., and R{\"o}ckner, M.
\newblock \emph{Stochastic Partial Differential Equations: An Introduction}.
\newblock Universitext, Springer, Cham, 2015.

\bibitem{MR06}
Mandrekar, V., and R\"udiger, B.
\newblock Existence and uniqueness of path wise solutions for stochastic integral equations driven by L{\'e}vy noise on separable Banach spaces.
\newblock \emph{Stochastics} \textbf{78}, 189--212, 2006.

\bibitem{MR10}
Marinelli, C., and R\"ockner, M.
\newblock Well-posedness and asymptotic behavior for stochastic reaction-diffusion equations with multiplicative Poisson noise.
\newblock \emph{Electron. J. Probab.} \textbf{15}, 1528--1555, 2010.

\bibitem{Me88}
M{\'e}tivier, M.
\newblock \emph{Stochastic Partial Differential Equations In Infinite Dimensional Spaces}.
\newblock Scuola Normale Superiore di Pisa, 1988.

\bibitem{Me88b}
M{\'e}tivier, M., and Viot, M.
\newblock On weak solutions of stochastic partial differential equations.
\newblock In \emph{Stochastic Analysis}, Lecture Notes in Mathematics, Vol. 1322, pages 139--150. Springer, Berlin, 1988.

\bibitem{My02}
Mytnik, L.
\newblock Stochastic partial differential equation driven by stable noise.
\newblock \emph{Probab. Theory Related Fields} \textbf{123}, 157--201, 2002.

\bibitem{NAY62}
Nagumo, J., Arimoto, S., and Yoshizawa, S.
\newblock An Active Pulse Transmission Line Simulating Nerve Axon.
\newblock \emph{Proc. IRE} \textbf{50}, 2061--2070, 1962.

\bibitem{NW69}
Newell, A.C., and Whitehead, A.
\newblock Finite bandwidth, finite amplitude convection.
\newblock \emph{J. Fluid Mech.} \textbf{38}, 279--303, 1969.

\bibitem{martin}
Ondrej\'at, M.
\newblock Uniqueness for stochastic evolution equations in Banach spaces.
\newblock \emph{Diss. Math.} \textbf{426}, 1--63, 2004.

\bibitem{parth}
Parthasarathy, K.~R.
\newblock \emph{Probability Measures on Metric Spaces}.
\newblock Probability and Mathematical Statistics, No. 3, Academic Press, Inc., New York, 1967.

\bibitem{Peszat95}
Peszat, S.
\newblock Existence and uniqueness of the solution for stochastic equations on Banach spaces.
\newblock \emph{Stochastics} \textbf{55}, 167--193, 1995.

\bibitem{Peszat_Z_2007}
Peszat, S., and Zabczyk, J.
\newblock \emph{Stochastic Partial Differential Equations with L\'evy Noise: An Evolution Equation Approach}.
\newblock Encyclopedia of Mathematics and its Applications, Cambridge University Press, Cambridge, 2007.

\bibitem{Protter}
Protter, P.
\newblock \emph{Stochastic Integration and Differential Equations}, 1st ed.
\newblock Applications of Mathematics, Vol. 21, Springer-Verlag, Berlin, 1990.

\bibitem{chinese}
Qiao, H.
\newblock A theorem dual to Yamada-Watanabe theorem for stochastic evolution equations.
\newblock \emph{Stoch. Dyn.} \textbf{10}, 367--374, 2010.

\bibitem{RZ07}
R{\"o}ckner, M., and Zhang, T.
\newblock Stochastic evolution equations of jump type: existence, uniqueness and large deviation principles.
\newblock \emph{Potential Anal.} \textbf{26}, 255--279, 2007.

\bibitem{Ro84}
Rothe, F.
\newblock \emph{Global Solutions of Reaction-Diffusion Systems}.
\newblock Lecture Notes in Mathematics, Vol. 1072, Springer-Verlag, Berlin, 1984.

\bibitem{Ro83}
Rozovskii, B.L.
\newblock \emph{Stochastic Evolution Systems}.
\newblock Nauka, Moscow, 1983 (in Russian).

\bibitem{ruediger}
R\"udiger, B., and Tappe, S.
\newblock Isomorphisms for spaces of predictable processes and an extension of the It\^o integral.
\newblock \emph{Stochastic Anal. Appl.} \textbf{30}, 529--537, 2012.

\bibitem{SLB98}
Saint Loubert Bi\'e, E.
\newblock {\'E}tude d’une EDPS conduite par un bruit poissonnien.
\newblock \emph{Probab. Theory Relat. Fields} \textbf{111}, 287--321, 1998.

\bibitem{sato}
Sato, K.-I.
\newblock \emph{L\'evy Processes and Infinitely Divisible Distributions}.
\newblock Cambridge University Press, Cambridge, 1999.

\bibitem{SY02}
Sell, G. R., and You, Y.
\newblock \emph{Dynamics of Evolutionary Equations}.
\newblock Springer, New York, 2002.

\bibitem{Simon1986}
Simon, J.
\newblock Compact sets in the space $L^p(0,T;B)$.
\newblock \emph{Ann. Mat. Pura Appl.} (4) \textbf{146}, 65--96, 1987.

\bibitem{tappe}
Tappe, S.
\newblock The Yamada-Watanabe theorem for mild solutions to stochastic partial differential equations.
\newblock \emph{Electron. Commun. Probab.} \textbf{18}, no. 13, 2013.

\bibitem{Te97}
Temam, R.
\newblock \emph{Infinite-Dimensional Dynamical Systems in Mechanics and Physics}, 2nd ed.
\newblock Applied Mathematical Sciences, Vol. 68, Springer, New York, 1997.

\bibitem{TW03}
Truman, A., and Wu, J.-L.
\newblock Stochastic Burger's Equation with L\'evy Space-time White Noise.
\newblock In \emph{Probabilistic Methods in Fluids}, pages 298--323. World Sci. Publ., River Edge, NJ, 2003.

\bibitem{TW06}
Truman, A., and Wu, J.-J.
\newblock Fractal Burger's Equation Driven by L\'evy Noise.
\newblock In \emph{Stochastic Partial Differential Equations and Applications—VII}, Lect. Notes Pure Appl. Math., Vol. 245, pages 295–310. Chapman \& Hall/CRC, Boca Raton, FL, 2006.

\bibitem{Tu52}
Turing, A. M.
\newblock The chemical basis of morphogenesis.
\newblock \emph{Philos. Trans. R. Soc. Lond. B} \textbf{237}(641), 37--72, 1952.

\bibitem{janmaxreg}
van Neerven, J., Veraar, M., and Weis, L.
\newblock Stochastic maximal $L^p$-regularity.
\newblock \emph{Ann. Probab.} \textbf{40}(2), 788--812, 2012.

\bibitem{vakhania}
Vakhania, N.~N., Tarieladze, V.~I., and Chobanyan, S.~A.
\newblock \emph{Probability Distributions on Banach Spaces}.
\newblock Mathematics and its Applications (Soviet Series), Vol. 14, D. Reidel Publishing Co., Dordrecht, 1987.

\bibitem{villa1}
Villarroel, J., and Montero, M.
\newblock On the integrability of the Poisson driven stochastic nonlinear Schr\"odinger equations.
\newblock \emph{Stud. Appl. Math.} \textbf{127}, 372--393, 2011.

\bibitem{villa2}
Villarroel, J., Maldonado, M., and Prada, J.
\newblock On the integrability of the nonlinear Schr\"odinger equation with randomly dependent linear potential.
\newblock \emph{J. Phys. A} \textbf{47}, 215202, 19, 2014.

\bibitem{Vio76}
Viot, M.
\newblock Solutions Faibles D’{\'E}quations Aux D\'eriv\'ees Partielles Non Lin\'eaires.
\newblock Th\`ese de doctorat, Universit\'e Paris VI, 1976.

\bibitem{Wa86}
Walsh, J.B.
\newblock An introduction to stochastic partial differential equations.
\newblock In \emph{\'Ecole d’\'et\'e de Probabilit\'es de Saint Flour XIV}, Lecture Notes in Mathematics, Vol. 1180, pages 265--437. Springer, Berlin, 1986.

\bibitem{wentzell}
Wentzell, A.D.
\newblock \emph{A Course in the Theory of Stochastic Processes}.
\newblock Advanced Book Program, McGraw-Hill International Book Company, New York, 1981. (Translated from Russian).

\bibitem{mtypep}
Woyczy\'nski, W.A.
\newblock Geometry and martingales in Banach spaces.
\newblock In \emph{Probability–Winter School}, Lecture Notes in Mathematics, Vol. 472, pages 229--275. Springer, Berlin, 1975.

\bibitem{yamada}
Yamada, T., and Watanabe, S.
\newblock On the uniqueness of solutions of stochastic differential equations.
\newblock \emph{J. Math. Kyoto Univ.} \textbf{11}, 155--167, 1971.

\bibitem{You08}
You, Y.
\newblock Global attractor of the Gray-Scott equations.
\newblock \emph{Commun. Pure Appl. Anal.} \textbf{7} (4), 947--970, 2008.

\bibitem{zeidler1}
Zeidler, E.
\newblock \emph{Nonlinear Functional Analysis and its Applications I: Fixed-Point Theorems}.
\newblock Springer, New York, 1986.

\bibitem{ZFK38}
Zeldovich, Y., and Frank-Kamenetzkii, D.A.
\newblock The theory of thermal propagation of flames.
\newblock \emph{Zh. Fiz. Khim.} \textbf{12}, 100--105, 1938.

\bibitem{poisson3}
Zhao, H.
\newblock Yamada-Watanabe theorem for stochastic evolution equation driven by Poisson random measure.
\newblock \emph{Int. Sch. Res. Not.}, pages Art. ID 982190, 7, 2014.

\end{thebibliography}
\end{document}